\DeclareMathOperator{\tr}{tr}
\DeclareMathOperator{\diag}{diag}
\DeclareMathOperator{\Spec}{Spec}
\DeclareMathOperator{\rk}{rk}
\DeclareMathOperator{\Pic}{Pic}
\DeclareMathOperator{\Fix}{Fix}
\DeclareMathOperator{\Aut}{Aut}
\DeclareMathOperator{\Bl}{Bl}
\DeclareMathOperator{\Sp}{Sp}
\DeclareMathOperator{\pr}{pr}
\DeclareMathOperator{\PGL}{PGL}
\DeclareMathOperator{\GL}{GL}
\DeclareMathOperator{\SL}{SL}
\DeclareMathOperator{\Ort}{O}
\DeclareMathOperator{\PSL}{PSL}
\DeclareMathOperator{\SO}{SO}
\DeclareMathOperator{\Eu}{Eu}
\DeclareMathOperator{\E}{E}
\DeclareMathOperator{\D}{D}
\DeclareMathOperator{\A}{A}
\DeclareMathOperator{\stab}{stab}
\DeclareMathOperator{\Exc}{Exc}
\DeclareMathOperator{\Bs}{Bs}
\DeclareMathOperator{\ord}{ord}
\DeclareMathOperator{\Gal}{Gal}
\theoremstyle{plain}
\newtheorem{thm}{Theorem}[section]
\newtheorem{lem}[thm]{Lemma} 
\newtheorem{sled}[thm]{Corollary} 
\newtheorem{prop}[thm]{Proposition} 
\theoremstyle{definition}
\newtheorem{mydef}[thm]{Definition} 
\newtheorem{rem}[thm]{Remark}
\newtheorem{ex}[thm]{Example}
\g@addto@macro{\endabstract}{\@setabstract}
\newcommand{\authorfootnotes}{\renewcommand\thefootnote{\@fnsymbol\c@footnote}}%
\begin{document}
	
\begin{center}
	\LARGE 
	Subgroups of odd order in the real plane Cremona group \par \bigskip
	
	\normalsize
	\authorfootnotes
	\large Egor Yasinsky\footnote{yasinskyegor@gmail.com\\ {\it Keywords:} Cremona group, conic bundle, del Pezzo surface, automorphism group.} \par \bigskip
	\normalsize
	Steklov Mathematical Institute of Russian Academy of Sciences \\8 Gubkina st., Moscow, Russia, 119991\par \bigskip

\end{center}

\newcommand{\CC}{\mathbb C} 
\newcommand{\SSS}{\mathbb S} 
\newcommand{\FF}{\mathbb F} 
\newcommand{\TT}{\mathbb T}
\newcommand{\NN}{\mathbb N}
\newcommand{\kk}{\Bbbk}
\newcommand{\CRR}{\rm Cr_2(\mathbb{R})}
\newcommand{\CRC}{\rm Cr_2(\mathbb{C})}
\newcommand{\CRkn}{\rm Cr_n(\Bbbk)}
\newcommand{\CRk}{\rm Cr_2(\kk)}
\newcommand{\PR}{\mathbb{P}^2_\mathbb{R}}
\newcommand{\PRone}{\mathbb{P}^1_\mathbb{R}}
\newcommand{\PCone}{\mathbb{P}^1_\mathbb{C}}
\newcommand{\PC}{\mathbb{P}^2_\mathbb{C}}
\newcommand{\Pk}{\mathbb{P}^2_{\kk}}
\newcommand{\RR}{\mathbb R}
\newcommand{\PP}{\mathbb P}
\newcommand{\ZZ}{\mathbb Z}
\newcommand{\WW}{\mathcal W}
\newcommand{\DD}{\mathcal D}
\newcommand{\XC}{X_{\mathbb C}} 
\newcommand{\XR}{X_{\mathbb R}}    
\newcommand{\LL}{\mathcal L} 
\newcommand{\LLL}{\mathscr L}
\newcommand{\MM}{\mathcal M} 

\newcommand*\conj[1]{\overline{#1}}

\begin{abstract}
In this paper we describe conjugacy classes of finite subgroups of odd order in the group of birational automorphisms of the real projective plane.
\end{abstract}

\section{Introduction}\label{sec:intro}
Consider a projective space $\mathbb{P}^n_{\kk}$ over an arbitrary field $\kk$. Recall that {\it the Cremona group} $\CRkn$ is the group of its birational automorphisms. From algebraic point of view the Cremona group over $\kk$ is the group of $\kk$-automorphisms of the field $\kk(x_1,\ldots,x_n)$ of rational
functions in $n$ independent variables. 

The classification of finite subgroups in Cremona groups is a classical problem which goes back to E. Bertini's work on involutions in $\CRC$. He discovered three types of conjugacy classes, which are now known as de Jonqui\`{e}res, Geiser and Bertini involutions. However, Bertini's classification was incomplete and his proofs were not rigorous. The next step was made in 1895 by S. Kantor and A. Wiman who gave a description of finite subgroups in $\CRC$. Their list was quite comprehensive, but not precise in several respects.

The modern approach started with the works of Yu. I. Manin and V. A. Iskovskikh who discovered the deep connection between conjugacy classes of finite subgroups in the plane Cremona group and classification of $G$-minimal rational surfaces $(S,G)$ and $G$-equivariant birational maps between them. This approach was taken by L. Bayle and A. Beauville in their work on involutions \cite{bb}. The classification was generalised by T. de Fernex for subgroups of prime order \cite{df}. Finite abelian subgroups in $\CRC$ were classified by J. Blanc in \cite{blancabelian}. Finally, the most precise description of conjugacy classes of arbitrary finite subgroups in $\CRC$ was given by I. V. Dolgachev and V. A. Iskovskikh in \cite{di}. 

Much less is known in the case when the ground field $\kk$ is not algebraically closed or $n\geqslant 3$. Some results about the existence of birational automorphisms of prime order in $\CRk$ for any perfect field $\kk$ were obtained by Dolgachev and Iskovskikh in \cite{di-perf}. Similar questions, including a Minkowski-style bound for the orders of the finite subgroups
in $\CRk$, are discussed in J.-P. Serre's works \cite{serre2}, \cite{serre1}. The generators for various subgroups of $\CRR$ were studied by J. Blanc and F. Mangolte in \cite{blanc}. As for the case $n\geqslant 3$, there are only partial results on classification of finite subgroups in ${\rm Cr}_3(\CC)$ (see, for example, \cite{pro1}, \cite{pro2}).

In this paper we work in the category of schemes defined over $\RR$ together with regular morphisms of schemes. In other words, a regular morphism for us is a rational map defined at all complex points. The group of automorphisms of a scheme $X$ in such a category is denoted by $\Aut(X)$.  One can also consider the category with the same objects and morphisms defined as follows: we say that there is a morphism $f:X\dashrightarrow Y$ if $f$ is a rational map defined at all real points of $X$. Automorphisms in such a category are called {\it birational diffeomorphisms} and the corresponding group is denoted by $\Aut(X(\RR))$. Clearly, $\Aut(X)\subset\Aut(X(\RR))$. In recent years, birational diffeomorphisms of real rational projective surfaces have been studied intensively (see, for example, \cite{huis-mang}, \cite{kol-mang}). In particular, prime order birational diffeomorphisms of the sphere, i.e. elements of the group $\Aut(S(\RR))$, where $S=\{[w:x:y:z]\in\PP_\RR^3: w^2=x^2+y^2+z^2\}$, were studied in \cite{rob}. 

In this work we classify all subgroups of odd order in the real plane Cremona group. Our main results are the following two theorems.
\begin{thm}\label{conicbundle}
Any finite subgroup of odd order in $\CRR$ is conjugate to a subgroup of the automorphism group of some real del Pezzo surface $X$. More precisely, one of the following holds:
\begin{enumerate}
\item $\rk\Pic(X)^G=1$, and $X$ is $\RR$-rational;
\item $\rk\Pic(X)^G=2$, $X\cong \PRone\times\PRone$ and $G$ can be written as a direct product of at most two cyclic groups.
\end{enumerate}
\end{thm}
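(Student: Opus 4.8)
The plan is to follow the Manin–Iskovskikh strategy adapted to the real field. Given a finite subgroup $G$ of odd order in $\CRR$, I would first regularize its action: by a standard equivariant resolution argument, $G$ is conjugate to a group acting by automorphisms on a smooth projective real rational surface $X$, and by running a $G$-equivariant minimal model program I may assume $(X,G)$ is $G$-minimal. The classical dichotomy of Iskovskikh then asserts that a $G$-minimal rational surface is either a del Pezzo surface with $\rk\Pic(X)^G=1$, or admits a $G$-equivariant conic bundle structure $\pi\colon X\to B$ with $\rk\Pic(X)^G=2$. Case (1) is then essentially the del Pezzo alternative, so the work concentrates on showing that under the oddness hypothesis the conic bundle case collapses to $\PRone\times\PRone$ with the stated product structure.

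\medskip

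For the del Pezzo case I would invoke the structure of $\Aut(X)$ for real del Pezzo surfaces together with the constraint that $|G|$ is odd. The key observation is that many of the geometrically interesting automorphisms of del Pezzo surfaces of low degree — Geiser and Bertini involutions, as well as the involutions governing the singular fibres and the Galois/complex-conjugation action — have even order, so that an odd-order group cannot realize the full complexity and is forced onto an $\RR$-rational model. I would assume as given (from results cited or proved earlier) the classification of real del Pezzo surfaces and their automorphism groups, and then simply read off which of these admit odd-order subgroups acting $G$-minimally with $\rk\Pic(X)^G=1$; these are $\RR$-rational, yielding (1).

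\medskip

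The heart of the argument, and the main obstacle, is the conic bundle case. Here $\pi\colon X\to B$ is a $G$-equivariant conic bundle with $\rk\Pic(X)^G=2$, so $G$ acts on the base $B\cong\PRone$ and permutes the singular fibres, whose images in $B$ form a $G$-invariant finite set. The crucial point is a parity argument: each singular fibre contributes a reducible configuration whose two components are swapped by an involution sitting in the geometric monodromy/stabilizer, and since $|G|$ is odd no nontrivial element of $G$ can swap the two components of a fibre. I would show, using the exact sequence relating $G$, its image in $\Aut(B)\cong\PGL_2(\RR)$, and the kernel acting fibrewise on the generic $\PRone$, that the oddness forces the number of degenerate fibres to vanish, so the conic bundle is in fact a $\PRone$-bundle; combined with $\rk\Pic(X)^G=2$ and rationality this pins down $X\cong\PRone\times\PRone$. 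The fibrewise kernel then embeds into $\PGL_2(\RR)$, and since finite odd-order subgroups of $\PGL_2(\RR)$ are cyclic, both the kernel and the image in $\Aut(B)$ are cyclic; the extension therefore presents $G$ as an extension of one cyclic group by another, which (again because the order is odd, so the two factors can be arranged to act on the two rulings independently) can be written as a product of at most two cyclic groups, giving (2).

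\medskip

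The delicate step I expect to fight hardest with is the parity/monodromy bookkeeping over $\RR$: I must track not only the abstract combinatorics of singular fibres but also the real structure, ensuring that complex-conjugate pairs of singular points and the behaviour of $G$ over the possibly empty real locus of a fibre are handled correctly, since over a non-closed field a ``conic'' fibre can be a real conic without real points. Controlling the interaction between the $G$-action and complex conjugation — essentially understanding the $G\times\Gal(\CC/\RR)$-action on the geometric Picard lattice — is what makes the real case genuinely harder than the complex one treated by Dolgachev–Iskovskikh, and that is where the argument must be carried out with care.
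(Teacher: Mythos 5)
Your overall skeleton (regularization, equivariant minimal model program, the Iskovskikh dichotomy) matches the paper's, and your parity observation is indeed one of its tools; but two steps in your conic bundle analysis have genuine gaps. First, parity alone does not kill all singular fibres. Over $\RR$ a singular fibre over a real point of the base can consist of two complex-conjugate components meeting at a real point; these are swapped by the Galois involution $\sigma$, not by any element of $G$, so the oddness of $|G|$ says nothing about them --- and indeed relatively minimal real conic bundles with $c\geq 4$ such fibres exist (case (3) of Theorem~\ref{rminimal}) and admit odd-order actions. What the parity argument excludes are only the configurations contractible over $\RR$: a real component of a singular fibre, or a disjoint conjugate pair of components lying in two different fibres (the paper's cases (a) and (b)), where $G$-minimality produces an element swapping components, which must have even order. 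The fibres with intersecting conjugate components are eliminated not by parity but by $\RR$-rationality: a minimal conic bundle carrying them has $K_X^2=8-c\leq 4$, contradicting Iskovskikh's criterion (Theorem~\ref{IskovskikhCrit}). Since your surface is $\RR$-rational by regularization this is fixable, but your claim that ``oddness forces the number of degenerate fibres to vanish'' is false as stated.

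Second, and more seriously, ``no degenerate fibres plus $\rk\Pic(X)^G=2$ plus rationality'' does not pin down $X\cong\PRone\times\PRone$: every Hirzebruch surface $\FF_n$ with $n\geq 2$ is a rational $\PP^1$-bundle with no $(-1)$-curves at all, hence minimal and a fortiori $G$-minimal, and it carries odd-order actions with invariant Picard rank $2$. Since the theorem concerns conjugacy in $\CRR$, one must actually produce $G$-equivariant birational maps: the paper performs $G$-equivariant elementary transformations centred at a pair of complex-conjugate $G$-fixed points on the two invariant fibres (when the image $G'$ of $G$ in $\Aut(\PRone)\cong\PGL_2(\RR)$ is a nontrivial cyclic group), dropping $\FF_n$ to $\FF_{n-2}$ and eventually reaching $\FF_0$ or the non-minimal $\FF_1$, and it rules out $G'=\{\mathrm{id}\}$ on $\FF_n$, $n>0$, because the two conjugate fixed points on each fibre cannot interact consistently with the real $(-n)$-section, which is $\Gamma$-invariant and meets each fibre once. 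This reduction is the heart of case (2) and is entirely missing from your plan. A smaller flaw: presenting $G$ as an extension of a cyclic group by a cyclic group and concluding it is a direct product ``because the order is odd'' is invalid --- the nonabelian group of order $21$ is such an extension. The correct argument, available only once you are on $\FF_0$, is that $\Aut(\PRone\times\PRone)\cong(\PGL_2(\RR)\times\PGL_2(\RR))\rtimes\ZZ/2\ZZ$, oddness places $G$ in the product, and then $G\subseteq\pi_1(G)\times\pi_2(G)$ with both projections cyclic. (In the del Pezzo case, by contrast, you overwork: the $\RR$-rationality of $X$ in (1) is automatic from regularization, not something to be extracted from a classification of automorphism groups --- that finer analysis belongs to Theorem~\ref{delpezzo}, not here.)
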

The next theorem gives the details about finite groups arising in the case (1) of Theorem \ref{conicbundle}. It also provides an answer to the question which of those groups are {\it linearizable}, i.e. conjugate in $\CRR$ to subgroups of $\Aut(\PP_\RR^2)=\PGL_3(\RR)$ (see subsection \ref{subsec:regularization} for precise definitions).
\begin{thm}\label{delpezzo}
Let $X$ be a real $\RR$-rational del Pezzo surface, and $G\subset\Aut(X)$ be a group of odd order, such that $\rk\Pic(X)^G=1$. Then one of the following cases holds:
\begin{itemize}
\item $K_X^2=9$, $G$ is a cyclic subgroup of $\PGL_3(\RR)$;
\item $K_X^2=8$, $G$ is cyclic and linearizable;
\item $K_X^2=6$, $G\cong(\ZZ/n\ZZ\times\ZZ/m\ZZ)\rtimes(\ZZ/3\ZZ)$ for some odd integers $n,m\geq 1$; this group is linearizable if and only if $n=m=1$;
\item $K_X^2=5$, $G\cong\ZZ/5\ZZ$ and is linearizable.
\end{itemize}
Moreover, all the possibilities listed above actually occur.
\end{thm}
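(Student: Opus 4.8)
The plan is to run through the possible degrees $d = K_X^2 \in \{1,\dots,9\}$ of the minimal real del Pezzo surface $X$ and, in each case, to extract $G$ from its action on the geometric Picard lattice. Write $\sigma$ for the generator of $\Gal(\CC/\RR)$ acting on $\XC$, and set $\Gamma = \langle G,\sigma\rangle$, a finite subgroup of $\Aut(\XC)\rtimes\Gal(\CC/\RR)$. Since $\Pic(X) = \Pic(\XC)^\sigma$, the hypothesis $\rk\Pic(X)^G = 1$ is equivalent to the statement that $\Gamma$ acts on the root lattice $R_d := K_X^\perp \subset \Pic(\XC)$ (of rank $9-d$) without nonzero invariant vectors, because $-K_X$ already spans a $\Gamma$-invariant line. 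The representation of $\Aut(\XC)$ on $R_d$ factors through the Weyl group $W(R_d)$, so $G$ maps to an odd-order subgroup of $W(R_d)$ while $\sigma$ maps to an element normalizing its image.

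First I would dispose of the degrees that cannot occur. A del Pezzo surface of degree $7$ carries a unique $(-1)$-curve meeting the other two exceptional curves, hence a $\Gamma$-invariant class, forcing $\rk\Pic(X)^G \ge 2$; so $d = 7$ is impossible. For $d \in \{1,2,3,4\}$ I would combine two constraints: the odd-order subgroups of $W(R_d)$ acting without invariants on $R_d$ are very restricted, and the compatible real structures must keep $X$ \emph{$\RR$-rational}. Here I would invoke Comessatti's description of minimal $\RR$-rational surfaces together with the topology of $X(\RR)$: the real loci produced in these low degrees are either empty or have the wrong number of connected components for rationality, ruling them out. This leaves exactly $d \in \{9,8,6,5\}$.

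Next I would identify $G$ in each surviving degree. For $d=9$ we have $\XC = \PC$, so $X = \PR$ and $G$ is a finite odd-order subgroup of $\PGL_3(\RR)$; conjugating into a maximal compact subgroup $\cong \SO_3(\RR)$ and using that the only odd-order finite subgroups of $\SO_3(\RR)$ are cyclic gives the first case. For $d=8$ minimality forces $\sigma$ to interchange the two rulings of $\XC \cong \PRone\times\PRone$ (an odd-order $G$ cannot), so $X(\RR)\cong \SSS^2$ and $G$ is an odd-order subgroup of the diagonal $\PGL_2(\CC)$, hence cyclic. For $d=6$ one has $\Aut(\XC) = (\CC^\ast)^2 \rtimes (S_3\times\ZZ/2\ZZ)$; the image of $G$ in the Weyl factor lands in the unique odd part $\ZZ/3\ZZ \subset S_3$, and combining the toric part (a product of at most two cyclic groups) with the cyclic permutation of the three blown-up points yields $G \cong (\ZZ/n\ZZ\times\ZZ/m\ZZ)\rtimes\ZZ/3\ZZ$. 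For $d=5$ one has $\Aut(\XC)\cong S_5$ acting as $W(\mathsf A_4)$; the odd-order subgroups are $\ZZ/3\ZZ$ and $\ZZ/5\ZZ$, and since a $3$-cycle fixes a two-dimensional subspace of $R_5$, the no-invariants condition together with the $\RR$-rationality of $X$ eliminates $\ZZ/3\ZZ$ and leaves $G \cong \ZZ/5\ZZ$.

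It remains to settle linearizability and existence. For $d = 5,8$ and for $d=6$ with $n=m=1$ I would write down explicit $G$-equivariant birational maps realizing $G$ inside $\PGL_3(\RR)$. The decisive and hardest point is the non-linearizability of the degree-$6$ groups with $(n,m)\ne(1,1)$: here I would produce a $G$-birational invariant distinguishing $(X,G)$ from any linear action, for instance by analysing the $G$-fixed locus on $X(\RR)$ (a diagonalizable linear action always fixes real points, whereas the nontrivial toric part removes them) or, more robustly, by running the $G$-equivariant Sarkisov program and checking that no link reduces $X$ to $\PR$. Finally, explicit models for every degree --- diagonal and monomial actions on toric del Pezzo surfaces for $d=6,9$, the quadric for $d=8$, and a symmetric quintic for $d=5$ --- establish that all listed cases occur. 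I expect the real-structure bookkeeping in the exclusion step and this last non-linearizability argument to be the main obstacles.
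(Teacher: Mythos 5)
Your skeleton is the same as the paper's: run through the degrees, use the $\Gamma\times G$-action on $K_X^\perp$ through the Weyl group, kill $d=7$ by the invariant middle curve, and treat $d=9,8,6,5$ by identifying $\Aut$. Two of your variants are fine and even slicker: for $d=8$, passing to the diagonal $\PGL_2(\CC)\subset\Aut(Q_{3,1})$ replaces the paper's double Lefschetz fixed-point computation on $\SSS^2$ (though you still owe the real fixed point plus the projection link to get linearizability, as you acknowledge). Your low-degree plan also matches the paper in outline, with two caveats: for $d=4$ the exclusion is purely lattice-theoretic (eigenvalue bookkeeping in $\WW(\D_5)$; no topology of $X(\RR)$ enters), and for $d=3$ your appeal to $\RR$-rationality is not just convenient but indispensable --- minimal $\ZZ/3\ZZ$-actions genuinely exist on real cubics with $X(\RR)\approx\SSS^2\sqcup\RR\PP^2$ (the paper's surfaces $S_\alpha$), so no Weyl-group or fixed-point argument alone can rule out $d=3$.

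The first genuine gap is in degree $6$: you assert $G\cong(\ZZ/n\ZZ\times\ZZ/m\ZZ)\rtimes\ZZ/3\ZZ$ by ``combining'' the toric part with the rotation, but the extension $1\to G_T\to G\to\ZZ/3\ZZ\to 1$ has no a priori reason to split when $3$ divides $|G_T|$ (Schur--Zassenhaus does not apply, and nothing in your argument excludes, say, an element of order $9$ whose cube lies in the torus). The paper proves splitting geometrically: pick $h\in G$ mapping to a generator of $\widehat G$, take a fixed point of $h$ off the hexagon (holomorphic Lefschetz), blow it up to a quintic del Pezzo surface, contract the $h$-orbit of three disjoint $(-1)$-curves to land on $\PCone\times\PCone$, and observe that $h^3$ fixes the three resulting points on the diagonal, whence $h^3=\mathrm{id}$. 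Some such argument is required; without it the claimed group structure is unproved.

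The second gap is your non-linearizability device for $d=6$. ``A linear action fixes real points, the toric part removes them'' is not a $G$-birational invariant as stated: an equivariant blow-up at a real fixed point replaces it by a $\PP^1_\RR$ on which an odd-order group can act as a rotation with \emph{no} real fixed points, so real fixed points can be destroyed (and created) along equivariant birational maps. Your Sarkisov fallback can be made to work but is heavy machinery; you missed the short argument already contained in your own degree-$9$ step. Linearizable odd-order groups are cyclic; if $G$ were cyclic, conjugation by the lift of $\widehat G$ would act trivially on $G_T$, but this conjugation is the coordinate rotation $\lambda_1\mapsto\lambda_2\mapsto\lambda_3$ of $T$, whose fixed subgroup is the diagonal $\ZZ/3\ZZ$; hence $G_T\subseteq\ZZ/3\ZZ$, giving $G\cong(\ZZ/3\ZZ)^2$, not cyclic --- a contradiction unless $n=m=1$. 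Finally, a smaller point for $d=5$: exhibiting one explicit linearizable model does not show that \emph{every} minimal pair $(X,\ZZ/5\ZZ)$ is linearizable unless you also prove uniqueness of the $G$-surface; the paper instead argues intrinsically (the two fixed points avoid all lines and conics, blowing them up gives a cubic surface, and one linearizes either via the two skew conjugate lines to $Q_{3,1}$ or by projection from the tangent plane at a real fixed point).
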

\begin{rem}\label{rem: counterexample}
	It may be interesting to notice that Theorem \ref{delpezzo} with a slight modification is valid if we replace the $\RR$-rationality assumption by a weaker one, namely $X(\RR)\ne\varnothing$. The only new cases obtained are del Pezzo surfaces of degree 2 and 3 with non-connected real loci (hence these are not $\RR$-rational), and the group $\ZZ/3\ZZ$ acting minimally on them (see Example \ref{ex: counterexample} and Remark \ref{rem: dP2 non rational}).
\end{rem}

This paper is organised as follows. Section \ref{sec:prelim} recalls notation and background results from the theory of rational surfaces and equivariant minimal model program. In Section \ref{sec:cb} we prove Theorem ~\ref{conicbundle}. In Section \ref{sec:bigdp} we prove Theorem \ref{delpezzo} for del Pezzo surfaces of high degree. In particular, we show that there is an infinite series of non-linearizable subgroups of odd order in $\CRR$ and give an example of two non-conjugate embeddings of $(\ZZ/3\ZZ)^2$ into $\CRR$. In Section ~\ref{sec: smalldp} we prove Theorem ~\ref{delpezzo} for del Pezzo surfaces of low degree. Our methods here are less geometric (namely, we use the Weyl groups). Finally, for the reader's convenience, some information about conjugacy classes in the Weyl groups is included in Appendix ~\ref{appendix}.

\subsection*{Acknowledgments} This work was performed in Steklov Mathematical Institute and supported by the Russian Science Foundation under grant 14-50-00005. The author would like to thank his advisor Yuri Prokhorov for numerous useful discussions. The author is also grateful to J\'er\'emy Blanc, Fr\'{e}d\'{e}ric Mangolte, Constantin Shramov, Andrey Trepalin and the anonymous referee for their useful remarks.

\section{Preliminaries}\label{sec:prelim}

Throughout the paper $X$ denotes geometrically smooth projective real algebraic surface, and $\XC$ denotes its complexification:
\[
\XC=X\times_{\Spec\RR}\Spec\CC.
\]
Note that there is a natural Galois group $\Gamma=\Gal(\mathbb{C}/\mathbb{R})=\langle\sigma\rangle_2$ action on $\XC$ (here and later $\sigma$ is an antiholomorphic involution on $\XC$, while $\langle a\rangle_n$ denotes a cyclic group of order $n$ generated by $a$). As usual, $X(\CC)$ denotes the set of complex points of $X$, and $X(\RR)=X(\CC)^\sigma$ is its real part (with the Euclidean topology). Consider the canonical projection \[\pr: \XC\to X.\] Let $p\in X$ be a closed point. Then either $\pr^{-1}(p)=p$ or $\pr^{-1}(p)=\{p,\sigma(p)\}$. {\it An exceptional curve} (or {\it $(-1)$-curve}) on a complex surface $S$ is a curve $L$ such that  $L\cong\PCone$ and $L^2=-1$. A curve $E$ on real surface $X$ is said to be exceptional if:
\begin{description}
\item[(i)] either $\pr^{-1}(E)=L$ is exceptional on $\XC$ and $L=\sigma(L)$;
\item[(ii)] or $\pr^{-1}(E)=L+\sigma(L)$, $L$ is exceptional on $\XC$ and $L\cap \sigma(L)=\varnothing$.
\end{description}
Recall that a surface $X$ is said to be $\RR$-{\it minimal} if any birational $\RR$-morphism $X\to Y$ to smooth projective real surface $Y$ is an isomorphism. As in the complex case, one can show that any birational morphism $X\to Y$ is a composition of blowdowns, i.e. there is a sequence of contractions of exceptional curves (in the sense of the previous definition). It follows that a surface is $\RR$-minimal if and only if it has no exceptional curves \cite[Chapter III, Theorem 21.8]{cubicforms}.

\subsection{Rational $G$-surfaces}

In the following definitions the ground field $\kk$ is an arbitrary perfect field.

\begin{mydef}
A {\it geometrically rational surface}\footnote{Note that many authors use the word ``rational'' to mean ``geometrically rational''.} $X$ is a smooth projective surface over $\kk$ such that $X_{\overline{\kk}}=X\times_{\Spec\kk}\Spec\overline{\kk}$ is birationally isomorphic to $\mathbb{P}^2_{\overline{\kk}}$. Geometrically rational surface $X$ is called {\it $\kk$-rational} if it is $\kk$-birational to $\Pk$.
\end{mydef}
\begin{mydef}
Let $G$ be a finite group. A {\it $G$-surface} is a triple $(X,G,\iota)$, where $X$ is a surface over $\kk$ and $\iota: G\hookrightarrow \Aut_{\kk}(X)$ is a faithful $G$-action. A {\it morphism} of $G$-surfaces $(X_1,G,\iota_1)\to (X_2,G,\iota_2)$ (or $G$-{\it morphism}) is a morphism $f: X_1\to X_2$ such that for each $g\in G$ the following diagram commutes:
\[\xymatrix@C+1pc{
	X_1\ar[r]^{f}\ar[d]_{\iota_1(g)}  & X_2\ar[d]^{\iota_2(g)}\\
	X_1\ar[r]^{f} & X_2  
}\]
Rational maps and birational maps of $G$-surfaces are defined in a similar way. We will often omit $\iota$ from the notation and refer to the pair $(X,G)$ or simply $X$, if no confusion arises.
\end{mydef}

\begin{mydef}
A $G$-surface $(X,G)$ is called {\it minimal} (we also say that $X$ is {\it $G$-minimal}) if any birational $G$-morphism $X\rightarrow X'$ of $G$-surfaces is an isomorphism. 
\end{mydef}
\begin{rem}
If $G=\{\rm id\}$ then $G$-minimal surface is just a $\kk$-minimal surface in the sense of the theory of minimal models.
\end{rem}
\begin{mydef}
Let $f: X\rightarrow B$ be a $G$-morphism of $G$-surface $(X,G)$, where $B$ is a curve. This morphims is said to be {\it relatively $G$-minimal} if for any decomposition \[f: X\overset{g}{\longrightarrow}X'\overset{h}{\longrightarrow}B,\] where $h$ is a $G$-morphism and $g$ is a birational $G$-morphism, $g$ is in fact an isomorphism.
\end{mydef}

\begin{mydef}
We say that a smooth $G$-surface $(X,G)$ admits a {\it conic bundle structure}, if there is a $G$-morphism $\pi: X\rightarrow C$, where $C$ is a smooth curve and each scheme fibre is isomorphic to a reduced conic in $\Pk$.
\end{mydef}
\begin{rem}
Let $\pi: X\to C$ be a geometrically rational conic bundle over a field $\kk$. If $c$ denotes the number of its singular fibres over $\overline{\kk}$, then by Noether's formula we have $K_X^2=8-c$.
\end{rem}
\begin{mydef}
A {\it del Pezzo surface} is a smooth projective surface $X$ with ample anticanonical divisor class $-K_X$. The {\it degree} $d$ of a del Pezzo surface $X$ is the self-intersection number $K_X^2$.
\end{mydef}
\begin{rem}
It is well known that a del Pezzo surface over an algebraically closed field $\conj{\kk}$ is isomorphic either to $\mathbb{P}^1_{\conj{\kk}} \times \mathbb{P}^1_{\conj{\kk}}$ or $\mathbb{P}^2_{\conj{\kk}}$ blown up in $9-d$ points in general position \cite[Chapter IV, Theorem 24.4]{cubicforms}.
\end{rem}
\begin{mydef}
Let $n\in\ZZ_{\geq 0}$. The $n$-th {\it Hirzebruch surface} (or {\it rational ruled surface}) $\FF_n$ is the projectivisation of a vector bundle $\mathscr{E}\cong\mathscr{O}_{\mathbb{P}^1_\kk}\oplus\mathscr{O}_{\mathbb{P}^1_\kk}(-n)$.
\end{mydef}

\subsection{Regularization of finite group action}\label{subsec:regularization}
Let $(X,G)$ be a rational $G$-surface. A birational map $\psi: X\dasharrow\Pk$ yields an injective homomorphism \[
i_\psi: G\to\CRk,\ \ g\mapsto \psi\circ g\circ \psi^{-1}.
\]
We say that $G$ is {\it linearizable} if there is a birational map $\psi: X\dasharrow\Pk$ such that $i_{\psi}(G)\subset\PGL_3(\kk)$. If $(X',G)$ is another rational $G$-surface with birational map $\psi': X'\dasharrow\Pk$, then the subgroups $i_\psi(G)$ and $i_{\psi'}(G)$ are conjugate if and only if $G$-surfaces $(X,G)$ and $(X',G)$ are birationally isomorphic. In other words, a birational isomorphism class of $G$-surfaces defines a conjugacy class of subgroups of $\CRk$ isomorphic to $G$.

It can be shown that any conjugacy class is obtained in this way. In fact, the modern approach to classification of finite subgroups in the Cremona group is based on the following result \cite[Lemma 6]{di-perf}.
\begin{lem}
	Let $G\subset\CRk$ be a finite subgroup. Then there exists a $\kk$-rational smooth projective surface $X$, an injective homomorphism \[\iota: G\rightarrow \Aut_{\kk}(X)\] and a birational $G$-equivariant $\kk$-map $\psi: X\dasharrow\Pk$, such that
	\[
	G=\psi\circ\iota (G)\circ{\psi}^{-1}
	\]
\end{lem}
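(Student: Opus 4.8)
The plan is to reduce the statement to the following regularization assertion: any faithful action of the finite group $G$ on $\Pk$ by birational transformations (i.e. any inclusion $G\subset\CRk$) can be made biregular on a smooth model. Concretely, I would produce a smooth projective $\kk$-surface $X$ carrying a faithful regular action $\iota\colon G\hookrightarrow\Aut_{\kk}(X)$ together with a $G$-equivariant birational map $\psi\colon X\dashrightarrow\Pk$, where $G$-equivariance means $g\circ\psi=\psi\circ\iota(g)$ for every $g\in G$. This is precisely the desired relation $g=\psi\circ\iota(g)\circ\psi^{-1}$, and applying it to all $g\in G$ gives the identity of subgroups $G=\psi\circ\iota(G)\circ\psi^{-1}$. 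Moreover $X$ is automatically $\kk$-rational, since it is birational to $\Pk$, and $\iota$ is automatically injective, since the regular $G$-action on $X$ induces on the common function field $\kk(X)=\kk(\Pk)$ exactly the faithful action coming from $G\subset\CRk$. So everything reduces to constructing such an $X$.

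First I would pass to the function field $K=\kk(\Pk)=\kk(x,y)$, on which $G$ acts by $\kk$-algebra automorphisms. Fixing an affine chart $\mathbb{A}^2_{\kk}\subset\Pk$ with coordinate ring $\kk[x,y]$, I would consider the $\kk$-subalgebra $R\subset K$ generated by the finite set $\{\,g\cdot x,\ g\cdot y : g\in G\,\}$. By construction $R$ is a finitely generated $\kk$-algebra, it is $G$-stable (for $h\in G$ one has $h\cdot(g\cdot x)=(hg)\cdot x$, again in the generating set), and it contains $x,y$, so its fraction field is all of $K$. Hence $U=\Spec R$ is an affine $\kk$-variety birational to $\Pk$ and equipped with a genuine regular $G$-action. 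This already trades the birational action for a biregular one, at the cost of losing smoothness and completeness.

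Next I would make $U$ projective in a $G$-equivariant fashion. The $\kk$-span $V$ of the chosen generators $\{\,g\cdot x,\ g\cdot y : g\in G\,\}$ is a finite-dimensional $G$-stable subspace that generates $R$ as an algebra, so the associated rational map $U\dashrightarrow\PP(V^{\vee})$ is $G$-equivariant for the projectivized \emph{linear} $G$-action on $\PP(V^{\vee})$; taking the closure of its image yields a projective $\kk$-surface $Y$ with a regular $G$-action and a $G$-birational map to $\Pk$. Finally I would apply $G$-equivariant resolution of singularities to $Y$: being a surface over a perfect field, it can be resolved by normalization followed by the canonical blow-up procedure along the reduced singular locus, which is functorial and therefore commutes with the finite group action. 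This produces a smooth projective surface $X$ on which $G$ still acts regularly and faithfully, and composing all the intermediate birational maps gives the required $\psi\colon X\dashrightarrow\Pk$.

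The main obstacle is the equivariance of the last two steps rather than their existence. The compactification is harmless precisely because $G$ is finite, so a $G$-stable finite-dimensional generating subspace of $R$ is available at once. The delicate point is the resolution: one must invoke that resolution of singularities for surfaces over a perfect field can be arranged canonically, so that every automorphism of $Y$ lifts to an automorphism of $X$ and the group $G\subset\Aut_{\kk}(Y)$ maps isomorphically to a subgroup of $\Aut_{\kk}(X)$. Faithfulness is never an issue along the way, since every intermediate model shares the function field $K$ and the $G$-action on $K$ is faithful from the outset.
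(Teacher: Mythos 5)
The paper does not actually prove this lemma: it is quoted verbatim from Dolgachev--Iskovskikh \cite[Lemma 6]{di-perf}, so there is no in-paper argument to compare with. Your regularization scheme --- act on the function field $K=\kk(x,y)$, pass to a finitely generated $G$-stable model, compactify equivariantly, resolve equivariantly --- is precisely the standard proof of the cited result, and in outline it is sound: the algebra $R$ generated by the $G$-orbit of $\{x,y\}$ is indeed finitely generated, $G$-stable, and has fraction field $K$; equivariant resolution of excellent surfaces by iterated normalization and blow-up of the reduced singular locus is canonical, hence commutes with the $G$-action (note this is where the standing hypothesis that $\kk$ is perfect, made at the start of the paper's Section 2, enters); and your bookkeeping of faithfulness, $\kk$-rationality of $X$, and the conjugation identity $G=\psi\circ\iota(G)\circ\psi^{-1}$ is correct.

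There is, however, one step that fails as literally written: the compactification. The rational map $U\dashrightarrow\PP(V^{\vee})$ attached to the span $V$ of $\{g\cdot x,\ g\cdot y\}$ need not be birational onto its image, because projectivization only remembers the ratios $f_i/f_j$ of elements of $V$. Already for $G=\{\mathrm{id}\}$ one has $V=\langle x,y\rangle$ and the map $u\mapsto[x(u):y(u)]$ collapses $U$ onto $\PP^1$, so ``the closure of its image'' is a curve, not a projective model of the plane. The repair is immediate and in the spirit of what you wrote: replace $V$ by $V'=V+\kk\cdot 1\subset R$, which is still finite-dimensional and $G$-stable with $G$ acting trivially on the constants; equivalently, take the closed embedding $U\hookrightarrow\mathbb{A}^n$ given by the generators and compactify it inside $\PP^n=\PP(V'^{\vee})$, on which $G$ acts by the projectivized linear action. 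The resulting map $u\mapsto[1:f_1(u):\cdots:f_n(u)]$ is birational onto its closure $Y$ and intertwines the regular $G$-action on $U$ with a linear one on $\PP^n$, after which your resolution step applies verbatim. With this correction the proof is complete and agrees with the argument behind \cite[Lemma 6]{di-perf}.
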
 
Of course, the $G$-surface $(X,G,\iota)$ can be replaced by a minimal $\kk$-rational $G$-surface, so there is a natural bijection between the conjugacy classes of finite subgroups $G\subset\CRk$ and birational isomorphism classes of minimal smooth $\kk$-rational $G$-surfaces $(X,G)$. The following result is of crucial importance. Its proof can be found in \cite[Theorem 1G]{isk-2}, \cite[Theorem 5]{di-perf}.
\begin{thm}\label{classification}
Let $(X,G)$ be a minimal geometrically rational $G$-surface over a perfect field $\kk$. Then one of the following two cases occurs:
\begin{description}
\item[C] $X$ admits a conic bundle structure with ${\Pic(X)^G\cong\mathbb{Z}^2}$;
\item[D] $X$ is a del Pezzo surface with ${\Pic(X)^G\cong\mathbb{Z}}$. 
\end{description}
\end{thm}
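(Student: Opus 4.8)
The plan is to run the $G$-equivariant minimal model program on $X$, exploiting the fact that $X_{\overline{\kk}}$ is rational to force the existence of a $K_X$-negative extremal contraction. The central objects are the invariant cone $\overline{NE}(X)^G$ of $G$-stable effective $1$-cycles and the canonical class $K_X$, which is automatically $G$-invariant because it is functorial. I will abbreviate $\rho^G:=\rk\Pic(X)^G$ and use repeatedly that, over a perfect field, invariance should be read with respect to the combined action of $G$ and the Galois group $\Gal(\overline{\kk}/\kk)$ on $X_{\overline{\kk}}$.

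First I would record the basic positivity facts. Since $X$ is projective, averaging an arbitrary ample class over $G$ produces a $G$-invariant ample divisor, so $\rho^G\geq 1$. Because $X_{\overline{\kk}}$ is birational to $\mathbb{P}^2_{\overline{\kk}}$, it is rational, hence uniruled, and therefore $K_X$ is not pseudo-effective (equivalently, no positive multiple of $K_X$ is effective, as one checks after base change to $\overline{\kk}$). Applying the $G$-equivariant cone theorem, the cone $\overline{NE}(X)^G$ then carries a $K_X$-negative extremal ray $R$, and contracting it yields a $G$-morphism $\phi:X\to B$ with connected fibres, relative invariant Picard number one, and $\rho^G(X)=\rho^G(B)+1$.

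Next I would split into cases according to $\dim B$. If $\dim B=2$, then $\phi$ is birational and contracts a $G$-orbit of pairwise disjoint $(-1)$-curves; this is excluded by the hypothesis that $(X,G)$ is minimal. Hence $\phi$ is of fibre type. If $\dim B=1$, then $B$ is a smooth curve with $\rho^G(B)=1$, so $\rho^G(X)=2$; a general fibre $F$ has $F^2=0$, and since $-K_X$ is $\phi$-ample with $-K_X\cdot F=2$ one gets $F\cong\mathbb{P}^1_{\overline{\kk}}$ with reduced-conic fibres, which is exactly case \textbf{C}. If $\dim B=0$, then $B=\Spec\kk$ and $\rho^G(X)=1$; the extremal ray spans $\Pic(X)^G\otimes\mathbb{R}$, so the $G$-invariant class $-K_X$ is positive on $\overline{NE}(X)^G\setminus\{0\}$ and hence ample by Kleiman's criterion applied $G$-equivariantly. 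Thus $X$ is del Pezzo with $\Pic(X)^G\cong\mathbb{Z}$, which is case \textbf{D}.

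The main obstacle I anticipate is the careful justification of the equivariant cone and contraction theorems over a non-closed perfect field: one must handle the joint $G$- and Galois-action on $X_{\overline{\kk}}$, verify that a $K_X$-negative extremal ray descends to a genuine $\kk$-morphism, and confirm that fibre-type contractions produce precisely reduced-conic fibres rather than a more degenerate structure. The Picard-rank bookkeeping $\rho^G(X)=\rho^G(B)+1$ and the ampleness conclusion in the rank-one case are the delicate points; once these are in place, the minimality hypothesis together with standard surface geometry forces the stated dichotomy.
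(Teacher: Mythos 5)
First, a point of reference: the paper does not prove Theorem \ref{classification} at all --- it quotes it as a known result, citing \cite[Theorem 1G]{isk-2} and \cite[Theorem 5]{di-perf} --- so your proposal can only be compared with the standard proofs of the cited theorem. Measured that way, your sketch is exactly the modern $G$-equivariant Mori-theoretic argument in which the result is usually presented (cf.\ \cite{di}), whereas Iskovskikh's original proof is more classical, via termination of adjunction and a direct analysis of linear systems. Your route is correct in outline: for smooth projective surfaces the cone and contraction theorems do hold over any field, and taking invariants under the joint action of $G$ and $\Gal(\overline{\kk}/\kk)$ is unproblematic, so the reduction to the trichotomy $\dim B\in\{0,1,2\}$ goes through.

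Three spots should be argued rather than asserted. In the case $\dim B=2$ you claim the contracted locus is a \emph{pairwise disjoint} orbit of $(-1)$-curves; the justification is that the exceptional curves of a birational contraction have negative-definite intersection matrix, so if two $(-1)$-curves $E_1,E_2$ of the orbit met, then $(E_1+E_2)^2=-2+2E_1\cdot E_2\geq 0$ would give a contradiction --- the non-disjoint scenario is thus silently absorbed into your fibre-type cases, and without this remark minimality alone does not close the case, since minimality as defined only forbids birational $G$-morphisms to \emph{smooth} surfaces. In case \textbf{C}, ``reduced-conic fibres'' requires excluding multiple fibres: every component of a fibre lies in the $K_X$-negative ray, so a fibre is either an irreducible conic, two lines, or a double line $2C$ with $-K_X\cdot C=1$ and $C^2=0$, and the last is killed by adjunction parity, $2p_a(C)-2=C^2+K_X\cdot C=-1$. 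Finally, in case \textbf{D}, Kleiman's criterion needs positivity of $-K_X$ on all of $\overline{NE}(X)$, not merely on the invariant subcone; your phrase ``applied $G$-equivariantly'' should be spelled out as averaging: for nonzero $\alpha\in\overline{NE}(X_{\overline{\kk}})$ the orbit sum is a nonzero invariant class (it pairs positively with an invariant ample class), and since $-K_X$ is invariant it takes the same sign on $\alpha$ as on the orbit sum. These are routine insertions --- you flagged the neighborhood of each --- and with them your proof is complete and agrees in substance with the cited sources.
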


We will also need an important criterion of $\kk$-rationality, which is due to V. A. Iskovskikh. For more details we refer the reader to \cite[\S 4]{isk-1}.
\begin{thm}\label{IskovskikhCrit}
A minimal geometrically rational surface $X$ over a perfect field $\kk$ is $\kk$-rational if and only if the following two conditions are satisfied:
\begin{description}
\item[(i)] $X(\kk)\ne\varnothing$;
\item[(ii)] $d=K_X^2\geq 5$.
\end{description}
\end{thm}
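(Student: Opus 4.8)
The plan is to prove the two implications separately, using throughout the classification of Theorem~\ref{classification} applied to the trivial group $G=\{\mathrm{id}\}$: a minimal geometrically rational $X$ is either a conic bundle (case \textbf{C}) or a del Pezzo surface with $\rk\Pic(X)=1$ (case \textbf{D}). The necessity of (i) is immediate: if $X$ is $\kk$-rational there is a birational map $\Pk\dasharrow X$, and since $\Pk$ carries smooth $\kk$-points while $X$ is proper, the Lang--Nishimura lemma produces a $\kk$-point on $X$.

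For sufficiency assume $X(\kk)\ne\varnothing$ and $d=K_X^2\ge 5$. In case \textbf{D} the condition $\rk\Pic(X)=1$ together with $d\ge5$ leaves only a short list of types over $\conj{\kk}$. Degree $7$ cannot occur, since the strict transform of the line through the two blown-up points is the unique $(-1)$-curve meeting two others, hence Galois-invariant and contractible over $\kk$, contradicting minimality; and a minimal degree $8$ surface must have $X_{\conj{\kk}}\cong\PP^1_{\conj{\kk}}\times\PP^1_{\conj{\kk}}$ with the two rulings interchanged by the Galois action. I would then settle each remaining degree by explicit classical geometry: for $d=9$, $X$ is a Severi--Brauer surface, so a $\kk$-point forces $X\cong\Pk$ by Ch\^{a}telet's theorem; for $d=8$ the quadric is made $\kk$-rational by stereographic projection from the $\kk$-point; for $d=6$ and $d=5$ one combines the rational point with a Galois-stable set of pairwise disjoint exceptional curves, whose contraction lowers the degree while keeping a rational point, so that one eventually descends to $\Pk$ (for $d=5$ a $\kk$-point is in fact automatic).

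In case \textbf{C} the relation $K_X^2=8-c$ shows that $d\ge5$ forces at most three degenerate geometric fibres. The image of the $\kk$-point is a $\kk$-point of the base, which is a genus-zero curve and hence isomorphic to $\PP^1_\kk$; with so few degenerate fibres a short sequence of elementary transformations carried out in the fibres produces a birational ruling with a section, which trivialises the conic bundle birationally and yields $\kk$-rationality.

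The necessity of (ii) is the deep part of the statement: one must show that a minimal surface with $K_X^2\le4$ --- a del Pezzo surface of degree $\le4$, or a conic bundle with at least four degenerate fibres --- is never $\kk$-rational. Here I would invoke Iskovskikh's factorisation of any birational map between minimal geometrically rational surfaces into elementary links, and then inspect the resulting finite list: the key point is that no chain of such links can join a surface of degree $\le4$ to $\Pk$, which is precisely where the threshold $d=5$ appears. This last step is the main obstacle, since it rests on the completeness of the classification of elementary links --- the hardest ingredient of the arithmetic birational theory of rational surfaces, and the reason the result is cited here rather than reproved.
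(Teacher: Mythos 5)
The first thing to note is that the paper does not prove Theorem \ref{IskovskikhCrit} at all: it is stated as background, with an explicit pointer to \cite[\S 4]{isk-1}, and everything downstream (Theorem \ref{rminimal}, the standing $\RR$-rationality hypotheses in Sections \ref{sec:bigdp} and \ref{sec: smalldp}) uses it as a black box. So there is no in-paper argument to compare you against, and your own sketch, by your own admission, reproduces exactly this situation: the necessity of (ii) --- which you correctly identify as the entire depth of the theorem --- is deferred wholesale to the completeness of Iskovskikh's classification of elementary links. An outline that says ``no chain of links joins degree $\leq 4$ to $\Pk$, by the classification of links'' is a restatement of the citation, not a proof; as submitted, your proposal proves only the easy implication (the Lang--Nishimura argument for the necessity of (i) is fine) and gestures at the rest.

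Even within the sufficiency half, where your roadmap is the standard one, there are concrete local defects. In degree $6$ you propose to contract ``a Galois-stable set of pairwise disjoint exceptional curves'' --- but on $X$ itself no such set can exist, precisely because $X$ is assumed minimal; the stable sets one contracts live on the blow-up of $X$ at a well-chosen $\kk$-point (compare the paper's own degree-$6$ analysis over $\RR$ in Proposition \ref{prop: dP6 linearization}), and you must first produce a $\kk$-point off the hexagon of $(-1)$-curves, which over an arbitrary perfect field (finite fields included) needs a density or counting argument, not just $X(\kk)\ne\varnothing$. Relatedly, contracting exceptional curves \emph{raises} $K_X^2$; it does not ``lower the degree,'' so the intended descent mechanism is stated backwards (one blows up to lower the degree and contracts to raise it, terminating at degree $8$ or $9$). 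Finally, in the conic bundle case the sentence ``a short sequence of elementary transformations produces a ruling with a section'' is precisely the nontrivial assertion that relatively minimal conic bundles with $1\leq c\leq 3$ singular fibres are never minimal of case \textbf{C}, or else are rational; over $\RR$ this is part of Comessatti's Theorem \ref{rminimal} ($c=0$ or $c\geq 4$), but over a general perfect field it is again a theorem of Iskovskikh and cannot be waved through, since an elementary transformation centered on a smooth fibre leaves $c$ unchanged and one may only remove a singular fibre by contracting a component that is defined over $\kk$ --- exactly what minimality forbids.
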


From now on we set $\kk=\RR$. Denote by $Q_{r,s}$ the smooth quadric hypersurface 
\[
\{[x_1:\ldots :x_{r+s}]: x_1^2+\ldots +x_r^2-x_{r+1}^2-\ldots -x_{r+s}^2=0\}\subset\PP_\RR^{r+s-1}.
\] 
The description of minimal geometrically rational real surfaces with real point is essentially due to A. Comessatti \cite{comm}. Modern proofs can be found in \cite{manin}, \cite{isk-2}, \cite{pol}, \cite{kol}.

\begin{thm}\label{rminimal}
Let $X$  be a minimal geometrically rational real surface with $X(\RR)\ne\varnothing$. Then one and exactly one of the following cases occurs:
\begin{enumerate}
\item $X$ is $\RR$-rational: it is isomorphic to $\PR$, to the quadric $Q_{3,1}$ or to a real Hirzebruch surface $\FF_n$, $n\ne 1$;
\item $X$ is a del Pezzo surface of degree 1 or 2 with $\rho(X)=1$;
\item $X$ admits a minimal conic bundle structure $\pi: X\to \mathbb{P}^1$ with even number of singular fibers $c\geq 4$ and $\rho(X)=2$.
\end{enumerate}
\end{thm}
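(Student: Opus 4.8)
The plan is to apply the dichotomy of Theorem \ref{classification} over $\kk = \RR$ and then refine each branch using the Galois involution, the rationality criterion of Theorem \ref{IskovskikhCrit}, and a Weyl-group computation. Viewing $X$ as a minimal geometrically rational $G$-surface with $G = \{\mathrm{id}\}$, so that $\Pic(X)^G = \Pic(X)$, Theorem \ref{classification} gives two possibilities: either $X$ carries a conic bundle $\pi \colon X \to C$ with $\rho(X) = 2$, or $X$ is a del Pezzo surface with $\rho(X) = 1$. It then remains to identify the surfaces occurring in each branch and to check that the three families of the statement are pairwise disjoint, which follows by comparing $\rho$ and $\RR$-rationality once the refinement is done.

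For the del Pezzo branch I would study the involution $\sigma_*$ induced on $\Pic(\XC)$. Since $\sigma$ is an automorphism fixing $K_{\XC}$ and sending $(-1)$-curves to $(-1)$-curves, $\sigma_*$ preserves the intersection form, $K_{\XC}$, and effectivity, hence lies in the Weyl group $W(R_d)$ of the root lattice $R_d \subset K_{\XC}^\perp$ spanned by the $(-2)$-classes. As $\sigma^2 = \mathrm{id}$ it is an involution, and $\rho(X) = \rk \Pic(\XC)^\Gamma = 1$ holds exactly when $\sigma_*$ acts as $-\mathrm{id}$ on $K_{\XC}^\perp$; this forces both that $R_d$ spans $K_{\XC}^\perp$ over $\QQ$ and that $-\mathrm{id} \in W(R_d)$. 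Running through the list $R_8 = A_1$, $R_7 = A_1$, $R_6 = A_2 \times A_1$, $R_5 = A_4$, $R_4 = D_5$, $R_3 = E_6$, $R_2 = E_7$, $R_1 = E_8$, both conditions hold together only for $d \in \{1,2,8,9\}$: degree $7$ is excluded because $R_7$ has rank $1$ while $K_{\XC}^\perp$ has rank $2$, and degrees $3,4,5,6$ because $-\mathrm{id} \notin W(E_6), W(D_5), W(A_4), W(A_2 \times A_1)$, whereas $-\mathrm{id}$ does lie in $W(A_1), W(E_7), W(E_8)$ and the case $d=9$ is automatic. By Theorem \ref{IskovskikhCrit} the remaining surfaces of degree $\geq 5$ are $\RR$-rational, and identifying them gives $X \cong \PR$ for $d = 9$ and $X \cong Q_{3,1}$ for $d = 8$ (the split product $\FF_0$ has $\rho = 2$ and the blow-up $\FF_1$ carries a $\sigma$-invariant $(-1)$-curve, so neither is a minimal del Pezzo of this branch); the surfaces of degree $1$ and $2$ are not $\RR$-rational and give case (2).

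For the conic bundle branch, a real point of $X$ maps to a real point of $C$, so the genus-zero curve $C$ has a real point and $C \cong \PRone$. The number of geometric singular fibres is $c = 8 - K_X^2$. Minimality rules out a $\sigma$-invariant $(-1)$-curve, so no reducible fibre over a real point may consist of two real lines: each is a pair of conjugate lines meeting at a real point, while the reducible fibres over non-real points occur in conjugate pairs. Examining $X(\RR) \to \PRone(\RR) = S^1$ — whose fibre is $S^1$ or empty away from the finitely many real singular fibres and degenerates to a single point at each of them — shows that the real fibre changes type an even number of times around the circle, so the number of real singular fibres, and hence $c$, is even. Finally $c = 0$ yields a Hirzebruch surface $\FF_n$, minimal exactly for $n \neq 1$ (case (1)), and $c = 2$ is never minimal, as one can contract a $\sigma$-invariant configuration of fibre components or sections; thus a genuine minimal conic bundle has even $c \geq 4$, giving case (3).

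The main obstacle, I expect, lies in the del Pezzo branch: the Weyl-group computation only isolates the \emph{a priori} possible Galois actions, and one must still confirm that each surviving degree is realised by an actual real del Pezzo surface possessing a real point, and that no case has been overlooked. Securing this realizability — together with the parity of $c$ in the conic bundle branch, which I would read off from the topology of the real locus rather than from the lattice — is where the real content sits; the initial reduction is formal given Theorems \ref{classification} and \ref{IskovskikhCrit}.
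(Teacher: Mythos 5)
You should first note that the paper itself offers no proof of this theorem: it is attributed to Comessatti, with modern proofs cited in Manin, Iskovskikh, Polyakova and Koll\'ar, so your reconstruction can only be compared with the classical arguments. Your del Pezzo branch is essentially the standard one and is sound: with $X(\RR)\ne\varnothing$ one has $\rho(X)=\rk\Pic(\XC)^\Gamma$, and since $\sigma^*$ is an involution, $\rho(X)=1$ forces $\sigma^*=-\mathrm{id}$ on $K_{\XC}^{\bot}\otimes\mathbb{Q}$; running through the lattices kills degrees $3$--$7$ and leaves $d\in\{1,2,8,9\}$ (this is exactly the technique the paper itself deploys in Section 5, e.g.\ ``there are no involutions in $\WW(\D_5)$ which act as $-\mathrm{id}$'' for degree 4, and the Geiser/Bertini identifications in degrees 2 and 1). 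Two caveats: the claim that $\sigma^*$ lies in the Weyl group needs a word --- for $d\le 6$ it follows from the standard fact that isometries of $\Pic(\XC)$ fixing $K_{\XC}$ form $\WW(\Delta_r)$, while for $d=7,8$ the roots do not span $K_{\XC}^{\bot}$ and you should argue directly with the $(-1)$-classes (for $d=7$ the middle line of the chain is $\sigma$-invariant, hence real and contractible). Also, your closing worry about realizability is misplaced: the theorem asserts that for a \emph{given} minimal $X$ exactly one of the three cases holds; it is a trichotomy, not an existence statement, so no construction of degree 1 and 2 examples is required.

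The genuine gap is the exclusion of $c=2$ in the conic bundle branch, which you dispatch with ``one can contract a $\sigma$-invariant configuration of fibre components or sections.'' No such configuration exists among fibre components: minimality first forces \emph{all} singular fibres to lie over real points (if a component $F$ sits in a fibre over a non-real point $p$, then $F$ and $\sigma(F)$ lie in distinct fibres, are disjoint conjugate $(-1)$-curves, and contract over $\RR$ --- a step you should make explicit, as it also cleans up your parity count), and then the two components of each real singular fibre are complex conjugate and meet at a real point, so the pair $\{L,\sigma(L)\}$ is never contractible and no real irreducible fibre component is a $(-1)$-curve. Hence the entire content of $c=2\Rightarrow$ non-minimal is producing a real $(-1)$-\emph{section} or a disjoint conjugate pair of $(-1)$-sections, and that requires an actual argument. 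One route: writing $\sigma(L_i)=f-L_i$ for the fibre components, a direct computation shows every $\Gamma$-invariant class in $\Pic(\XC)$ has \emph{even} intersection with the fibre class $f$; consequently a section of minimal self-intersection, if unique, would be $\sigma$-invariant, hence real, with odd intersection with $f$ --- a contradiction that rules out relative models $\FF_n$ with $n\ge 1$ and forces $\FF_0$ with the two blown-up points on distinct horizontal rulings; the two $(-1)$-sections through these points are then swapped by $\sigma$, are disjoint, and contract over $\RR$. Without something of this kind, case (3) with $c\ge 4$ is simply not established. A smaller omission of the same flavour: for $c=0$ you need that a smooth real conic bundle with $X(\RR)\ne\varnothing$ has a section (the generic fibre is a conic over $\RR(t)$ with a rational point), so that $X$ is a $\mathbb{P}^1$-bundle and hence some $\FF_n$; this is standard but deserves a line.
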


\begin{rem}
	Here is a simple but important observation. The condition $X(\RR)\ne\varnothing$ implies that $\Pic(\XC)^\Gamma=\Pic(X)$, where $\Gamma=\Gal(\CC/\RR)$ is the Galois group \cite[I, 4.5]{Silhol}. In particular, $\rk\Pic(\XC)^{\Gamma\times G}=\rk\Pic(X)^G$.
\end{rem}

\subsection*{Summary} To sum up, let $G$ be a finite subgroup of odd order in $\CRR$. Then we may assume that $G$ acts on a $\mathbb{R}$-rational surface $X$ making $X$ a $G$-minimal surface (in fact, in Sections \ref{sec:bigdp} and \ref{sec: smalldp} we will need only that $X(\RR)\ne\varnothing$, except the cases $K_X^2=3$ and $K_X^2=2$). 

\subsection{A bit of group theory}

The following facts are well-known. We include some proofs for completeness and the reader’s convenience.

\begin{lem}\label{liftinglemma}
	Let $G$ be a finite group of odd order. Then every faithful projective representation $\theta: G\to\PGL_n(\RR)$, $n\geq 2$, can be lifted to a faithful representation $\widetilde{\theta}: \widetilde{G}\cong G\to\SL_n(\RR)$. 
\end{lem}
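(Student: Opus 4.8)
The plan is to reduce the statement to lifting $\theta$ to an ordinary linear representation in $\GL_n(\RR)$, and then to observe that the odd order of $G$ automatically forces such a lift to land in $\SL_n(\RR)$. Concretely, I would pull back the central extension
\[
1 \longrightarrow \RR^* \longrightarrow \GL_n(\RR) \overset{\pi}{\longrightarrow} \PGL_n(\RR) \longrightarrow 1
\]
along $\theta$, obtaining a central extension $1 \to \RR^* \to E \to G \to 1$, where $E = \{(g,A)\in G\times\GL_n(\RR): \theta(g)=\pi(A)\}$ and the map $E\to G$ is $(g,A)\mapsto g$, with kernel $\{(1,\lambda I)\}\cong\RR^*$. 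A homomorphic lift of $\theta$, i.e.\ a homomorphism $\rho:G\to\GL_n(\RR)$ with $\pi\circ\rho=\theta$, is the same thing as a splitting of this extension (a splitting $g\mapsto(g,\rho(g))$ is exactly such a $\rho$), and such a splitting exists precisely when the class of the extension in $H^2(G,\RR^*)$ (with trivial $G$-action, since $\RR^*$ is central) vanishes.

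The key computation is therefore the vanishing of $H^2(G,\RR^*)$ for $G$ of odd order. I would use the decomposition of abelian groups $\RR^*\cong\{\pm 1\}\times\RR_{>0}$, which gives $H^2(G,\RR^*)\cong H^2(G,\ZZ/2\ZZ)\oplus H^2(G,\RR_{>0})$. The factor $\RR_{>0}\cong(\RR,+)$ is uniquely divisible, so the cohomology of the finite group $G$ with these coefficients vanishes in positive degrees (one can average any cocycle by $\tfrac{1}{|G|}$). The factor $H^2(G,\ZZ/2\ZZ)$ is annihilated by $2$, being a $2$-torsion module, and also by $|G|$, by the standard restriction--corestriction argument; since $|G|$ is odd, $\gcd(2,|G|)=1$ and this group vanishes as well. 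Hence $H^2(G,\RR^*)=0$ and the desired homomorphism $\rho:G\to\GL_n(\RR)$ with $\pi\circ\rho=\theta$ exists.

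Finally I would pass from $\GL_n(\RR)$ to $\SL_n(\RR)$ for free: the map $\det\circ\rho:G\to\RR^*$ is a homomorphism whose image is a finite subgroup of $\RR^*$, hence contained in $\{\pm 1\}$; as $|G|$ is odd this image is trivial, so $\rho(G)\subset\SL_n(\RR)$. Faithfulness is then immediate, since $\theta=\pi\circ\rho$ is faithful, whence $\rho$ is injective and we may take $\widetilde{G}:=\rho(G)\cong G$ with $\widetilde{\theta}=\rho$. I expect the only real subtlety to be the cohomological vanishing step --- in particular, separating the uniquely divisible part $\RR_{>0}$ from the $2$-torsion part $\{\pm 1\}$ and invoking that odd-order groups have no $2$-torsion cohomology; everything after the lift to $\GL_n(\RR)$ reduces to the one-line determinant argument above. (For odd $n$ one could shortcut the first step, since $\SL_n(\RR)\to\PGL_n(\RR)$ is then already an isomorphism, but the $\GL_n(\RR)$ route handles all $n$ uniformly.)
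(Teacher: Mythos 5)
Your proof is correct, and it takes a more cohomological route than the paper, though the two arguments share the same arithmetic core. The paper first disposes of odd $n$ via $\PGL_n(\RR)\cong\SL_n(\RR)$, and for even $n$ chases the $3\times 3$ diagram built from $1\to\{\pm 1\}\to\RR^*\to\RR_{+}\to 1$ and the determinant sequence: since $G$ has odd order it lands in $\PSL_n(\RR)$, its preimage in $\SL_n(\RR)$ is a central extension of $G$ by $\{\pm I\}$ of order $2|G|$, and $G$ is recovered as an (odd-order) Hall subgroup of that preimage --- i.e.\ the paper invokes the coprime splitting (Schur--Zassenhaus for a central kernel of order $2$) in group-theoretic clothing. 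Your computation that $H^2(G,\RR^*)=0$, via $\RR^*\cong\{\pm 1\}\times\RR_{>0}$, unique divisibility of $\RR_{>0}$, and restriction--corestriction killing $H^2(G,\ZZ/2\ZZ)$ by coprimality of $2$ and $|G|$, is precisely the cohomological content of that splitting; the vanishing of $H^2(G,\ZZ/2\ZZ)$ \emph{is} the existence of the paper's Hall-subgroup lift, and your handling of the $\RR_{>0}$ factor replaces the paper's reduction from $\PGL_n$ to $\PSL_n$. What your version buys: it is uniform in $n$ (no even/odd case split, no appeal to $\PSL_n$ at all), it isolates the obstruction conceptually ($H^2(G,\RR^*)=0$ for any odd-order $G$, independent of the particular $\theta$), and the final descent from $\GL_n(\RR)$ to $\SL_n(\RR)$ is your clean determinant argument, which the paper does not need since it works inside $\SL_n(\RR)$ from the start. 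What the paper's version buys: it is more elementary, using only the explicit diagram and the existence of an odd-order complement in a group of order $2|G|$, with no group cohomology machinery. Both proofs correctly conclude faithfulness the same way, from injectivity of the composite with the projection.
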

\begin{proof}
Since $\PGL_n(\RR)\cong\SL_n(\RR)$ for odd $n$, we will assume that $n$ is even. Consider the following commutative diagram:
	\[\xymatrix@C+1pc{
		& 1\ar[d] & 1\ar[d] & 1\ar[d] &\\
		1\ar[r] & \{\pm 1\}\ar[d]\ar[r] & \RR^*\ar[d]\ar[r] & \RR_{+}\ar[d]\ar[r] & 1\\
		1\ar[r]
		& \SL_n(\RR)\ar[d]^{\gamma}\ar[r]
		& \GL_n(\RR)\ar[d]\ar[r]^{\det}
		& \RR^*\ar[d]\ar[r]
		& 1\\
		1\ar[r]
		& \PSL_n(\RR)\ar[d]\ar[r]^{\alpha}
		& \PGL_n(\RR)\ar[r]\ar[d]
		& \{\pm 1\}\ar[r]\ar[d]
		& 1\\
		& 1 & 1 & 1 &
	}\]
Since $G\subset\PGL_n(\RR)$ is of odd order, we have $G\cong\alpha^{-1}(G)\subset\PSL_n(\RR)$. The group $\gamma^{-1}\circ\alpha^{-1}(G)$ is twice larger than $G$; however, we can find an
isomorphic lift of $G$ as a Hall subgroup therein.
\end{proof}

We use Lemma \ref{liftinglemma} to describe all subgroups of odd order in $\PGL_k(\RR)$ for $k=2,3,4$.
\begin{prop}\label{PGLclassification}
	Let $G$ be a finite group of odd order $n$.
	\begin{enumerate}
		\item If $G\subset\PGL_2(\RR)$ then $G$ is a cyclic group generated by a single matrix
		\[R_2(2\pi/n)=
		\begingroup
		\renewcommand*{\arraystretch}{1.2}
		\begin{pmatrix}
		\cos\frac{2\pi}{n} & \sin\frac{2\pi}{n} \\
		-\sin\frac{2\pi}{n} & \cos\frac{2\pi}{n}
		\end{pmatrix}
		\endgroup
		\]
		\item If $G\subset\PGL_3(\RR)$ then $G$ is a cyclic group generated by a single matrix
		\[
		R_3(2\pi/n)=
		\begingroup
		\renewcommand*{\arraystretch}{1.2}
		\begin{pmatrix}
		1 & 0 & 0 \\ 
		0 & \cos\frac{2\pi}{n} & \sin\frac{2\pi}{n} \\
		0 & -\sin\frac{2\pi}{n} & \cos\frac{2\pi}{n}
		\end{pmatrix}
		\endgroup 
		\]	
		\item If $G\subset\PGL_4(\RR)$, then $G$ can be written as a direct product of at most two cyclic groups of odd orders.
		
	\end{enumerate}
\end{prop}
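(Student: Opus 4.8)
The plan is to convert each case into a statement about faithful real \emph{linear} representations of $G$ and then exploit the special representation theory of odd order groups. First I would apply Lemma~\ref{liftinglemma} to replace the projective representation $G\hookrightarrow\PGL_k(\RR)$ by a faithful linear one $G\hookrightarrow\SL_k(\RR)\subset\GL_k(\RR)$. Since $G$ is finite, averaging an arbitrary inner product yields a $G$-invariant one, so after conjugation we may assume $G\subset\SO(k)$. Thus in all three cases the task becomes the classification of odd order subgroups of $\SO(k)$, equivalently of faithful $k$-dimensional orthogonal representations of $G$, for $k=2,3,4$.

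The representation theory of an odd order group provides the two facts that drive everything. First, the dimension of every complex irreducible representation of $G$ divides $|G|$, hence is odd. Second, by Frobenius--Schur theory the number of real-valued irreducible characters equals the number of real conjugacy classes; but in an odd order group the only element conjugate to its inverse is the identity (if $xgx^{-1}=g^{-1}$ then conjugation by $x$ induces inversion, an automorphism of order $2$, on $\langle g\rangle$, whereas its order divides the odd number $\ord(x)$, forcing $g=g^{-1}$ and so $g=1$). Hence the only real-type irreducible is the trivial one, there are no quaternionic ones, and every nontrivial complex irreducible $\chi$ is of complex type with realification an irreducible real representation of \emph{even} dimension $2\dim_\CC\chi$. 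Since $\dim_\CC\chi$ is odd and $2\dim_\CC\chi\le 4$, we must have $\dim_\CC\chi=1$: every nontrivial real summand is $2$-dimensional, coming from a character $\chi\colon G\to\CC^*$, on which $G$ acts through $\SO(2)$.

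With this decomposition the three cases fall out. For $k=2$ the only nontrivial possibility is one such $2$-dimensional summand, so $G\hookrightarrow\SO(2)$ is a finite subgroup of the circle, hence cyclic, and in suitable coordinates its generator is $R_2(2\pi/n)$. For $k=3$ the decomposition is $2+1$ or $1+1+1$; faithfulness excludes the fully trivial case and forces $G$ to inject into the $\SO(2)$ acting on the $2$-plane, giving a cyclic group with generator $R_3(2\pi/n)$. For $k=4$ the decomposition is $2+2$, $2+1+1$, or $1+1+1+1$; the last two reduce to the cyclic situation, while in the case $2+2$ the group embeds via a pair of characters $(\chi_1,\chi_2)$ into the abelian group $\SO(2)\times\SO(2)$, so $G$ is a finite subgroup of this $2$-torus.

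Finally, to complete case~(3) I would show that a finite subgroup $A\subset\SO(2)\times\SO(2)$ is a direct product of at most two cyclic groups: the torsion of the torus is $(\mathbb{Q}/\ZZ)^2$, whose $p$-torsion is $(\ZZ/p\ZZ)^2$ for every prime $p$, so the $p$-rank of $A$ is at most $2$; by the structure theorem $A\cong\ZZ/d_1\ZZ\times\ZZ/d_2\ZZ$, with $d_1,d_2$ odd because $|G|$ is odd. The only point demanding genuine care is the representation-theoretic core, namely the Frobenius--Schur statement establishing that all nontrivial irreducibles are of complex type of odd complex dimension; once this is secured, the dimension bookkeeping and the torus argument are routine.
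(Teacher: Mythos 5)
Your proposal is correct, and after the shared opening move (Lemma \ref{liftinglemma} plus averaging to get $G\subset\SO(k)$, with odd order forcing determinant $1$) it takes a genuinely different route from the paper's. The paper simply quotes the classical classification of finite subgroups of $\SO_2(\RR)$ and $\SO_3(\RR)$ (cyclic, dihedral, Platonic) to settle (1) and (2), and for (3) uses the double cover $\SO_4(\RR)\to\SO_3(\RR)\times\SO_3(\RR)$, through which the odd-order group $G$ injects, so that $G\subseteq\pi_1(G)\times\pi_2(G)$ with each factor cyclic by the $\SO_3$ classification. You instead prove the needed structure from scratch: the no-real-classes argument for odd-order groups (your normalization of $\langle g\rangle$ and the order-$2$-versus-odd-order clash is sound), combined with the theorem that complex irreducible degrees divide $|G|$, shows every nontrivial real irreducible summand in dimension $\leq 4$ is the $2$-dimensional realification of a linear character; in particular this correctly rules out an irreducible real $4$-dimensional summand (it would be the realification of a complex irreducible of even degree $2$). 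Hence $G$ lands in a maximal torus $\SO(2)^{\lfloor k/2\rfloor}$, and your $p$-rank bound on finite subgroups of $\SO(2)\times\SO(2)$ gives (3); both proofs end with essentially the same structure-theorem bookkeeping, since the paper's subgroup of a product of two cyclic groups is likewise $2$-generated abelian. The trade-off: your argument is more self-contained (no Platonic solids, no spin double cover) and strictly more general --- it shows in one stroke that any odd-order subgroup of $\GL_k(\RR)$ is abelian of rank at most $\lfloor k/2\rfloor$ --- at the cost of invoking the degree-divides-order theorem and Frobenius--Schur theory, whereas the paper's route is shorter if one grants the classical inputs it cites.
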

\begin{proof}
	Applying Lemma \ref{liftinglemma}, we may assume that $G\subset\GL_k(\RR)$, $k=2,3,4$, in the corresponding cases above. Moreover, we may assume that $G\subset\SO_k(\RR)$, $k=2,3,4$, since every real representation of a finite group is equivalent to an orthogonal one and $G$ is of odd order. Recall that any finite subgroup of $\SO_2(\RR)$ is cyclic, while any finite subgroup of $\SO_3(\RR)$ is either cyclic, or dihedral, or one of the symmetry groups of Platonic solids $\mathfrak{A}_4$, $\mathfrak{S}_4$ or $\mathfrak{A}_5$. Now (1) is obvious and to conclude with (2) it remains to notice that the cyclic group of order $n$ acts as rotations in a plane, fixing the axis perpendicular to that plane.
	
	In order to prove (3), we use a well-known fact that $\SO_4(\RR)$ is a double cover of $\SO_3(\RR)\times\SO_3(\RR)$ \cite[Chapter 3, \S 3D]{Hat}. Hence, $G\subset\SO_3(\RR)\times\SO_3(\RR)$. Let $\pi_i$ be the projection on the $i$th component. Then $G\subseteq\pi_1(G)\times\pi_2(G)$, where $\pi_i(G)\subset\SO_3(\RR)$ are cyclic groups of odd order. Thus $G$ itself can be written as a direct product of at most two cyclic groups.
\end{proof}

\section{The conic bundle case}\label{sec:cb}

In this section we prove Theorem \ref{conicbundle}. We first recall what is an elementary transformation of a Hirzebruch surface. 

An {\it elementary transformation} of a comlex Hirzebruch surface $\FF_n$ is the following birational transformation. Let $\upsilon: Y\to \FF_n$ be the blow-up of a point $p$ on a fiber $F$, $\widetilde{F}$ is a strict transform of $F$, $\widetilde{C}_n$ is a strict transform of the $(-n)$-section $C_n\subset\FF_n$ and $E$ is the exceptional divisor. We have $(\widetilde{F})^2=(\upsilon^*F-E)^2=F^2-1=-1$. Then there is a morphism $\psi: Y\to Z$ blowing down $\widetilde{F}$ (over $\CC$). If $p\notin C_n$, then $\widetilde{C}_n^2=C_n^2=-n$ and $\widetilde{C}_n$ intersects $\widetilde{F}$ transversely in exactly one point. Thus $\psi(\widetilde{C}_n)^2=-n+1$ and $Z\cong\FF_{n-1}$. If $p\in C_n$, then $\widetilde{C}_n^2=C_n^2-1=-n-1$, $\widetilde{C}_n\cap\widetilde{F}=\varnothing$, so $\psi(\widetilde{C}_n)^2=-n-1$ and $Z\cong\FF_{n+1}$. The following commutative diagram illustrates these birational transformations:
\[\xymatrix@C+1pc{
		& Y\ar[dl]_{\upsilon}\ar[dr]^{\psi} &\\
		\FF_n\ar[d]\ar@{-->}[rr] && {Z=\FF_{n+1}\ \text{or}\ \FF_{n-1}}\ar[d]\\
		\PP^1\ar@{=}[rr] && \PP^1
	}\]
Note that over $\RR$ we can blow up either a real point or two imaginary conjugate points. For example, the blow-up of two conjugate imaginary points $p,\ \overline{p}\notin C_n\subset\FF_n$ with $n>0$ followed by the contraction of the strict transform of the fibres passing through $p,\ \overline{p}$, gives a birational map $\FF_n\dasharrow\FF_{n-2}$. An analogous procedure for a real point $q\in\FF_n(\RR)$ gives a birational map $\FF_n\dasharrow\FF_{n-1}$.
\begin{rem}
In the language of Sarkisov program these elementary transformations are both Sarkisov links of type II between two Mori fibrations. For more details on factorization of birational maps see \cite{isk-1}, \cite{corti} and \cite{pol} (for the case of real rational surfaces). 
\end{rem}
Now we are ready to prove Theorem \ref{conicbundle}.
\begin{proof}[Proof of Theorem \ref{conicbundle}]
	
Let $X$ be a surface of type {\bf (C)} (see Theorem \ref{classification}). Since $X$ is assumed to be $\mathbb{R}$-rational, we have $X(\mathbb{R})\ne\varnothing$. Thus $C(\mathbb{R})\ne\varnothing$ and $C\cong\PRone$.

We may assume that $X$ is relatively minimal. Indeed, suppose that there is an exceptional curve $E$ whose irreducible components are contained in singular fibers of $\pi$. We have the following two cases: {\bf (a)} $E$ is a real irreducible component of some singular fiber $E+E'$; {\bf (b)}  $E=F+\sigma(F)$, $F\cap\sigma(F)=\varnothing$, where $F+N_1$ and $\sigma(F)+N_2$ are two different singular fibers. Note that $G$-minimality of $X$ implies that there exists $g\in G$ such that $g(E)=E'$ in the case {\bf (a)}, and $g(F)=N_1$ or $g(\sigma(F))=N_1$ in the case {\bf (b)}. In both cases $g$ has an even order, so we obtain a contradiction (cf. \cite[Lemma 5.6]{di}). Indeed, in the case {\bf (a)} we have $g(E')=E$ (as $g$ respects the intersection product), $g^2(E')=E'$ and $\ord g=2n+1$ implies $E=g^{2n+1}(E)=g^{2n}(E')=E'$, a contradiction. Now consider the case when $g(\sigma(F))=N_1$ (if $g(F)=N_1$, we argue as above). Then $g(N_2)=F$, $\sigma(N_2)=N_1$, so $g(N_1)=g(\sigma(N_2))=\sigma(g(N_2))=\sigma(F)$. Thus $g^2(N_1)=g(\sigma(F))=N_1$ and $\ord g=2n+1$ implies $\sigma(F)=g^{2n+1}(\sigma(F))=g^{2n}(N_1)=N_1$, a contradiction.

Therefore $\rho(X)=2$ and $G$ acts trivially on $\Pic(X)$. If $X$ is not minimal, then there is a birational morphism $X\rightarrow X'$, where $X'$ is a del Pezzo surface \cite[Theorem 4]{isk-2}. Since $G$ acts trivially on $\Pic(X)$, this morphism is $G$-equivariant and the assertion follows.

Now let $X$ be a minimal surface. Theorem \ref{rminimal} shows that $X\cong\FF_n$, $n\ne 1$. Denote by $G'$ the image of $G$ in ${\rm Aut}(C)\cong{\rm PGL}_2(\RR)$. Since $G$ is of odd order, $G'$ has to be a cyclic group by Proposition \ref{PGLclassification}. Suppose that $n>0$. We have only two possibilities.

{\bf 1.} $G'\ne\{\rm id\}$. Then we have two $G'$-fixed points $p_1$, $p_2=\sigma(p_1)\in C_\CC\cong\mathbb{P}_\CC^1$, corresponding to $G$-invariant fibres $F_1$ and $F_2=\sigma(F_1)$. Making $G$-equivariant elementary transformations centered at two complex conjugate $G$-fixed points $q_i\in F_i$ not lying on the exceptional $(-n)$-section, we obtain a surface $X'\cong\mathbb{F}_{n-2}$. Proceeding in this way, we come either to $\mathbb{F}_1$, being not $G$-minimal, or $\mathbb{F}_0\cong \PRone\times\PRone$. 

{\bf 2.} $G'=\{\rm id\}$. Then $G$ acts by automorphisms of fibres, which are $\PRone$, so it has two complex conjugate fixed points on each fiber (recall that the order of $G$ is odd). One of these points lies on the $(-n)$-section $C_n$, while the other lies on some $n$-section. However, the $(-n)$-section is clearly $\Gamma$-invariant and intersects each fiber in exactly one point. So, this case does not occur (as complex conjugate points cannot lie both on the same fiber and the $(-n)$-section).
\newline
\newline
Therefore, we may assume that $n=0$, i.e. $X\cong\PRone\times\PRone$. Now we are going to study automorphisms of $\PRone\times\PRone$.
\begin{prop}\label{ConicCyclicGroups}
Assume that $X\cong \PRone\times\PRone$ and $\Pic(X)^G\cong\ZZ^2$. Then \[G\subseteq\big\langle R_2(2\pi/l)\big\rangle\times \big\langle R_2(2\pi/m) \big\rangle\] for some $l,m\in\NN$ (see Proposition \ref{PGLclassification} for the notation).
\end{prop}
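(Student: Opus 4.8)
The plan is to exploit the very explicit structure of $\Aut(\PRone\times\PRone)$ and then reduce everything to Proposition \ref{PGLclassification}(1). The starting observation is that the automorphism group of $\PRone\times\PRone$ is $(\PGL_2(\RR)\times\PGL_2(\RR))\rtimes\langle\tau\rangle_2$, where $\tau$ is the involution swapping the two factors. On $\Pic(X)=\ZZ f_1\oplus\ZZ f_2$ (the two ruling classes) the whole group acts through the quotient $\langle\tau\rangle_2$, with $\tau$ interchanging $f_1$ and $f_2$. Hence $\Pic(X)^G$ has rank $2$ precisely when $G$ acts trivially on $\Pic(X)$, i.e.\ when no element of $G$ induces the swap. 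First I would record this: the hypothesis $\Pic(X)^G\cong\ZZ^2$ forces $G\subseteq\PGL_2(\RR)\times\PGL_2(\RR)$, so that $G$ preserves each of the two rulings.

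With this in hand, the next step is to consider the two projections $\pi_1,\pi_2\colon G\to\PGL_2(\RR)$ onto the factors. Because $G\subseteq\PGL_2(\RR)\times\PGL_2(\RR)$, the homomorphism $g\mapsto(\pi_1(g),\pi_2(g))$ is injective, so $G\subseteq\pi_1(G)\times\pi_2(G)$. Each image $\pi_i(G)$ is a finite subgroup of $\PGL_2(\RR)$ whose order divides $|G|$ and is therefore odd. By Proposition \ref{PGLclassification}(1), every such subgroup is cyclic and, after conjugation in $\PGL_2(\RR)$, is generated by a rotation $R_2(2\pi/l)$ with $l=|\pi_1(G)|$, respectively $R_2(2\pi/m)$ with $m=|\pi_2(G)|$.

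Finally I would assemble the two factors. The conjugations used to bring $\pi_1(G)$ and $\pi_2(G)$ to standard rotational form take place in the two separate copies of $\PGL_2(\RR)$ and are therefore independent; carrying them out simultaneously is an automorphism of $X$ lying in $\PGL_2(\RR)\times\PGL_2(\RR)$, which preserves the $G$-surface structure up to isomorphism. After this normalization one has
\[
G\subseteq\pi_1(G)\times\pi_2(G)=\big\langle R_2(2\pi/l)\big\rangle\times\big\langle R_2(2\pi/m)\big\rangle,
\]
as claimed. I do not expect a serious obstacle here: the argument is a direct structural consequence of the shape of $\Aut(\PRone\times\PRone)$ together with Proposition \ref{PGLclassification}(1). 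The only point requiring genuine care is the first reduction, namely verifying that $\rk\Pic(X)^G=2$ rules out the swap $\tau$; this rests on the identification $\Pic(\XC)^\Gamma=\Pic(X)$ valid since $X(\RR)\ne\varnothing$, so that the $G$-action on $\Pic(X)$ is exactly the geometric one and its only nontrivial possibility is the interchange of rulings.
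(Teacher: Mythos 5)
Your proof is correct and follows essentially the same route as the paper: reduce to $G\subseteq\PGL_2(\RR)\times\PGL_2(\RR)$, then project to the two factors and apply Proposition \ref{PGLclassification}(1) to get $G\subseteq\pi_1(G)\times\pi_2(G)$ with each $\pi_i(G)$ cyclic generated by a rotation. The only (harmless) difference is in the first reduction: the paper rules out the factor-swapping involution directly from the odd order of $G$ (any element mapping nontrivially to the $\ZZ/2\ZZ$ quotient has even order), whereas you derive it from the hypothesis $\Pic(X)^G\cong\ZZ^2$; both are valid, and your version has the mild merit of making explicit where that hypothesis enters.
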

\begin{proof}
Recall that
\[
\Aut(\PRone\times\PRone)=({\rm PGL}_2(\RR)\times {\rm PGL}_2(\RR))\rtimes\ZZ/2\ZZ.
\]
As the order of $G$ is odd, $G\subset\PGL_2(\RR)\times\PGL_2(\RR)$. Let
\[
\pi_i: {\rm PGL}_2(\RR)\times {\rm PGL}_2(\RR)\rightarrow{\rm PGL}_2(\RR),\ \ i=1,\ 2,
\]
be the projection on the $i$th component. Then $G\subseteq\pi_1(G)\times\pi_2(G)$ and the assertion follows from Proposition \ref{PGLclassification}.
\end{proof}
\begin{sled}
If the conditions of Proposition \ref{ConicCyclicGroups} are satisifed, then $G$ is a product of at most two cyclic groups.
\end{sled}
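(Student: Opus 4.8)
The plan is to deduce the corollary immediately from Proposition \ref{ConicCyclicGroups} together with one purely group-theoretic fact about subgroups of finite abelian groups. By the proposition, under the stated hypotheses there exist $l,m\in\NN$ with
\[
G\subseteq A:=\big\langle R_2(2\pi/l)\big\rangle\times\big\langle R_2(2\pi/m)\big\rangle\cong\ZZ/l\ZZ\times\ZZ/m\ZZ.
\]
In particular $G$ is a finite abelian group that sits inside a group generated by two elements. So it suffices to establish the following general claim: any subgroup $H$ of a finite abelian group $A$ generated by $d$ elements can itself be generated by at most $d$ elements. Applying this with $d=2$ and $H=G$ finishes the argument, since a finite abelian group with at most two generators is, by the structure theorem, a direct product of at most two cyclic groups.

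First I would reduce the generation claim to the case of abelian $p$-groups via the primary decomposition $A=\bigoplus_p A_p$, using that the minimal number of generators satisfies $\mu(A)=\max_p\mu(A_p)$ (by the Chinese remainder theorem, generators of the distinct primary parts can be amalgamated) and that the subgroup decomposes compatibly as $H=\bigoplus_p(H\cap A_p)$. Thus it is enough to bound the number of generators of a subgroup of each $p$-primary component separately.

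The key step, and the one requiring the most care, is the $p$-group estimate: for a finite abelian $p$-group $P$ one has $\mu(P)=\dim_{\FF_p}P[p]$, where $P[p]=\{x\in P: px=0\}$ is the socle of $p$-torsion elements. For a subgroup $Q\subseteq P$ one then has $Q[p]=Q\cap P[p]\subseteq P[p]$, whence $\dim_{\FF_p}Q[p]\leq\dim_{\FF_p}P[p]$ and therefore $\mu(Q)\leq\mu(P)$. The subtle point is precisely that one must work with the socle $P[p]$ rather than with the quotient $P/pP$: the inclusion $Q\hookrightarrow P$ need not induce an injection $Q/pQ\hookrightarrow P/pP$, whereas it does induce an injection on the $p$-torsion subgroups, and it is the dimension of the socle that counts the number of cyclic factors. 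Combining these $p$-local bounds over all primes yields $\mu(G)\leq\mu(A)\leq 2$, so $G$ is a product of at most two cyclic groups, as claimed.
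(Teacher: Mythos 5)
Your proposal is correct and takes essentially the same route as the paper, which states the corollary as an immediate consequence of Proposition \ref{ConicCyclicGroups} (just as in Proposition \ref{PGLclassification}(3), where $G\subseteq\pi_1(G)\times\pi_2(G)$ is declared to be a product of at most two cyclic groups without further comment). All you have done beyond the paper is to supply a careful proof of the standard fact it leaves implicit --- that a subgroup of a $d$-generated finite abelian group is $d$-generated --- and your socle argument for this, including the warning that $Q/pQ\to P/pP$ need not be injective, is accurate.
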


Theorem \ref{conicbundle} now is proved.

\end{proof}

\section{del Pezzo surfaces with $K_X^2\geq 5$}\label{sec:bigdp}

Throughout the next two sections $X$ will denote a real del Pezzo surface with $X(\RR)\ne\varnothing$. We shall additionally assume that $X$ is $\RR$-rational in Proposition \ref{prop: dP3 Z3-minim} (see Remark \ref{rem: counterexample}). Note that this automatically holds if $K_X^2\geq 5$ by Iskovskikh's Theorem \ref{IskovskikhCrit}.
\newline
\newline
As we already mentioned in Section \ref{sec:prelim}, if $X$ is a del Pezzo surface, then $\XC$ is isomorphic to one of the following surfaces: $\PC$, $\mathbb{P}^1_{\mathbb{C}} \times \mathbb{P}^1_{\mathbb{C}}$, or $\PC$ blown up in $r\leq 8$ points in general position. We have $d=K_X^2=9-r.$

The following simple lemma will be useful for us.
\begin{lem}\label{mainlemma}
Let $X$ be a real del Pezzo surface containing a $G$-invariant exceptional curve. Then $\rk\Pic(X)^G>1$.
\end{lem}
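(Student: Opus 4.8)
The plan is to produce two linearly independent $G$-invariant classes in $\Pic(X)\otimes\mathbb{Q}$. The canonical class $K_X$ is automatically $G$-invariant, since $G$ acts by automorphisms, and by hypothesis the class $[E]$ of the invariant exceptional curve is $G$-invariant as well. Hence it suffices to show that $[E]$ and $K_X$ span a rank-two subspace, equivalently that the Gram matrix
\[
\begin{pmatrix}
E^2 & E\cdot K_X\\
E\cdot K_X & K_X^2
\end{pmatrix}
\]
is nondegenerate. Since $X(\RR)\ne\varnothing$, we may identify $\Pic(X)$ with $\Pic(\XC)^\Gamma$, on which the intersection form is the restriction of the one on $\Pic(\XC)$; I would therefore compute all three intersection numbers after pulling back along $\pr\colon\XC\to X$.

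Recall the two types of exceptional curves. In case (i) one has $\pr^*E=L$ with $L=\sigma(L)$ a single $(-1)$-curve, so $E^2=L^2=-1$ and, by adjunction on $\XC$, $E\cdot K_X=L\cdot K_{\XC}=-1$. In case (ii) one has $\pr^*E=L+\sigma(L)$ with $L$ and $\sigma(L)$ disjoint $(-1)$-curves, whence $E^2=L^2+2\,(L\cdot\sigma(L))+\sigma(L)^2=-2$ and $E\cdot K_X=K_{\XC}\cdot L+K_{\XC}\cdot\sigma(L)=-2$. Writing $d=K_X^2$, the determinant of the Gram matrix equals $-d-1$ in case (i) and $-2d-4$ in case (ii). As $X$ is del Pezzo we have $d\geq 1$, so in either case the determinant is strictly negative, in particular nonzero.

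It follows that $[E]$ and $K_X$ are linearly independent, and since both lie in $\Pic(X)^G$ we conclude $\rk\Pic(X)^G\geq 2>1$. The only point demanding any care is the bookkeeping of $\pr^*$ in the two cases and the compatibility of the intersection pairings on $X$ and $\XC$ (together with $\sigma^*K_{\XC}=K_{\XC}$); once this is settled the statement reduces to the one-line determinant computation above, so I do not anticipate a genuine obstacle.
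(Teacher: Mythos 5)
Your proof is correct and takes essentially the same route as the paper: both arguments rest on the two $G$-invariant classes $[E]$ and $K_X$ satisfying $E^2\in\{-1,-2\}<0<K_X^2$, which forbids proportionality. The paper simply runs this in contrapositive form, assuming $\rk\Pic(X)^G=1$, writing $E\sim -aK_X$ and contradicting $E^2<0$, which is your Gram-determinant computation in disguise.
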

\begin{proof}
Suppose that $\rk\Pic(X)^G=1$. Denote by $E$ a $G$-invariant exceptional curve. By definition, either $E=L$ is a real $(-1)$-curve, or $E=L+\sigma(L)$, where $L$ is exceptional on $\XC$ and $L\cap\sigma(L)=\varnothing$. Then either $L\sim-aK_X$, or $L+\sigma(L)\sim-aK_X$. Clearly, none of the cases occurs, as $E^2=-1$ in the first case and $E^2=-2$ in the second one.
\end{proof}
\begin{rem}
	A del Pezzo	surface of degree 7 is never $G$-minimal. Indeed, there are three exceptional divisors on this surface forming a chain. The middle one is always defined over $\RR$ and $G$-invariant, so we can contract it.
\end{rem}

The number of $(-1)$-curves on del Pezzo surfaces is classically known \cite[Chapter IV, Theorem 26.2]{cubicforms}. Since this information will be used throughout the paper, we provide it in Table ~\ref{table: delPezzoNumberOfLines}.
\setlength{\extrarowheight}{3pt}
\begin{table}[h!]
\caption{$(-1)$-curves on del Pezzo surfaces}
\label{table: delPezzoNumberOfLines}
\begin{tabular}{|c|c|c|c|c|c|c|llllll}
\cline{1-7}
 $d$ & 1 & 2 & 3 & 4 & 5 & 6 \\ \cline{1-7}
 $\#\ \text{of $(-1)$-curves}$ & 240 & 56 & 27 & 16 & 10 & 6  \\ \cline{1-7}
 \end{tabular}
\end{table}

\subsection{Birational maps between del Pezzo surfaces} We will need the following result about birational maps between two del Pezzo surfaces. It is probably well-known to experts, but we provide the proof for the sake of completeness.

\begin{lem}\label{lem:blow-up-down-dP}
	Let $X$ be a del Pezzo surface.
	\begin{enumerate}
		\item Let $\upsilon: X\to Z$ be a birational morphism. Then $Z$ is a del Pezzo surface.	
		\item Let $\pi: Y\to X$ be the blow-up of any point $p\in X$. Assume that the following conditions are satisfied: (i) $K_X^2>1$; (ii) $p$ does not lie on any $(-1)$-curve; (iii) additionally, $p$ does not lie on the ramification divisor of the double cover $\varphi_{|-K_X|}: X\to\PC$ when $K_X^2=2$. Then $Y$ is a del Pezzo surface with $K_Y^2=K_X^2-1$.
	\end{enumerate}
\end{lem}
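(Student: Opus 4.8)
The plan is to prove both statements using the standard numerical criterion for del Pezzo surfaces: a smooth rational surface $S$ is del Pezzo if and only if $-K_S$ is ample, which by the Nakai--Moishezon criterion and the classification of negative curves on such surfaces is equivalent to $K_S^2>0$ together with $-K_S\cdot C>0$ for every irreducible curve $C$; the latter reduces to checking that $S$ contains no irreducible curve $C$ with $K_S\cdot C\geq 0$ and $C^2<0$, i.e. no $(-2)$-curves (and no worse). For part (1), since $\upsilon\colon X\to Z$ is a birational morphism of smooth projective surfaces, I would use that it factors as a composition of blow-downs of $(-1)$-curves, so it suffices to handle the contraction of a single $(-1)$-curve $E\subset X$ to a point on $Z$. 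I would verify directly that $-K_Z$ stays ample: one has $\upsilon^*K_Z=K_X-E$, hence $K_Z^2=K_X^2+1>0$, and for any irreducible curve $C\subset Z$ with strict transform $\widetilde{C}$ on $X$, the projection formula gives $-K_Z\cdot C=-\upsilon^*K_Z\cdot\widetilde{C}=-(K_X-E)\cdot\widetilde{C}=(-K_X)\cdot\widetilde{C}+E\cdot\widetilde{C}>0$, since $-K_X$ is ample and $E\cdot\widetilde{C}\geq 0$. Ampleness of $-K_Z$ then follows, so $Z$ is del Pezzo.

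For part (2), I would argue again via the criterion that $Y$ is del Pezzo iff $-K_Y$ is ample iff $Y$ has no irreducible curves of negative self-intersection other than $(-1)$-curves, equivalently no effective curve $C$ with $-K_Y\cdot C\leq 0$. Writing $E$ for the exceptional curve of $\pi$ and using $K_Y=\pi^*K_X+E$, the numerics $K_Y^2=K_X^2-1$ are immediate, and condition (i) guarantees $K_Y^2>0$. The substance is to rule out a curve $C$ on $Y$ with $-K_Y\cdot C\leq 0$. I would take such a hypothetical $C$, let $\widetilde{C}=\pi_*C$ be its image with multiplicity $m=\operatorname{mult}_p\widetilde{C}\geq 0$ at $p$, so that $C=\pi^*\widetilde{C}-mE$ and $-K_Y\cdot C=(-K_X)\cdot\widetilde{C}-m$. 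Since $-K_X$ is ample, the only way to force $-K_Y\cdot C\leq 0$ is to have $\widetilde{C}$ with very small anticanonical degree and $p$ of high multiplicity on it; the relevant extremal cases are $(-1)$-curves through $p$ (excluded by (ii)) and, when $K_X^2=2$, components of the ramification curve of the anticanonical double cover passing through $p$ (excluded by (iii)).

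The key structural input I would invoke is that on a del Pezzo surface $X$ the irreducible curves of low anticanonical degree are severely constrained: any irreducible curve $\Gamma$ with $-K_X\cdot\Gamma=1$ is a $(-1)$-curve, and any irreducible $\Gamma$ with $-K_X\cdot\Gamma=2$ satisfies $\Gamma^2\in\{0,2\}$ (a conic in the anticanonical pencil, resp.\ the genus-zero members), so that a multiplicity-$2$ point can only drop the degree to zero when $\Gamma$ is part of the ramification locus in degree $2$. I would therefore enumerate: if $-K_X\cdot\widetilde{C}=1$ then $\widetilde{C}$ is a $(-1)$-curve and $p\in\widetilde{C}$ contradicts (ii) once $m\geq 1$; if $-K_X\cdot\widetilde{C}=2$ then $-K_Y\cdot C\leq 0$ forces $m\geq 2$, which by adjunction on $X$ ($p_a(\widetilde{C})\geq\binom{m}{2}$) and the genus bound for an anticanonical-degree-$2$ curve pins $\widetilde{C}$ down to the $K_X^2=2$ ramification case, excluded by (iii). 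Establishing these degree-$1$ and degree-$2$ classifications cleanly — in particular handling the $K_X^2=2$ double-cover geometry so that the hypothesis (iii) is exactly what is needed — is the step I expect to be the main obstacle; everything else is bookkeeping with the projection formula and adjunction.
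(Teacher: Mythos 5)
Your proposal is correct in substance and lands on exactly the same two contradictions as the paper, but it is organized differently, and one enumeration step is left incomplete. For part (1) the paper simply cites Manin (\emph{Cubic forms}, Ch.~IV, Cor.~24.5.2), whereas you give a direct, self-contained proof by factoring $\upsilon$ into contractions of $(-1)$-curves and verifying Nakai--Moishezon for $-K_Z$; that argument is sound as written. For part (2), the paper also argues by contradiction with an irreducible curve $C$ satisfying $K_Y\cdot C\geq 0$, but it splits according to whether $R=\pi(C)$ is smooth or singular at $p$: in the smooth case, $K_Y\cdot C=K_X\cdot R+1\leq 0$ together with Hodge index and adjunction \emph{on $Y$} forces $C^2=-2$, $C\cong\PCone$, so $R$ is a $(-1)$-curve through $p$, contradicting (ii); in the singular case it uses $\dim|-K_Y|\geq K_Y^2>0$ (Riemann--Roch) to realize $R$ as the pushforward of an anticanonical member with $p_a(R)=1$, whence $\operatorname{mult}_p R=2$ and $C\sim -K_Y-E$, forcing $K_Y^2=1$, $K_X^2=2$ and $p$ on the ramification divisor, contradicting (iii). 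You replace this smooth/singular dichotomy by a numerical classification on $X$ in terms of $d=-K_X\cdot\widetilde{C}$ and $m=\operatorname{mult}_p\widetilde{C}$; your cases $d=1$, $m=1$ and $d=2$, $m=2$ correspond precisely to the paper's two cases, your low-degree classifications are correct given hypothesis (i) (you need $K_X^2\geq 2$ for ``anticanonical degree $1$ implies $(-1)$-curve''), and the double-cover step you flag as the main obstacle is standard: an anticanonical curve on a degree-$2$ del Pezzo is the preimage of a line, which is singular exactly over a point of tangency of the line with the branch quartic, so its singular point lies on the ramification divisor --- precisely what (iii) excludes. Your route also dispenses with the paper's Riemann--Roch/effectivity step, since Nakai--Moishezon needs no effective anticanonical member.

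The one genuine omission: you assert that $-K_Y\cdot C=d-m\leq 0$ can only happen with $d\in\{1,2\}$, but your enumeration never rules out $d\geq 3$ with $m\geq d$. Fortunately this closes with the tools you already invoke: $m\geq d$ gives $p_a(\widetilde{C})\geq\binom{d}{2}$, while the Hodge index bound $K_X^2\cdot\widetilde{C}^2\leq d^2$ and adjunction give $p_a(\widetilde{C})\leq 1+\frac{d\,(d-K_X^2)}{2K_X^2}$; combining the two yields $(K_X^2-1)\,d^2\leq 2K_X^2$, which under hypothesis (i) ($K_X^2\geq 2$) forces $d\leq 2$, with $d=2$ possible only when $K_X^2=2$ (and then equality in Hodge index pins $\widetilde{C}\in|-K_X|$, feeding exactly into your ramification case). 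With that inequality inserted, your plan is a complete proof, essentially equivalent in content to the paper's but with the case division done numerically on $X$ rather than geometrically via the singularity of $\pi(C)$ at $p$.
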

\begin{proof}

For (1) see \cite[Chapter IV, Corollary 24.5.2]{cubicforms}. Let us prove (2). We have $K_Y=\pi^*K_X+E$, where $E$ is the exceptional divisor. Obviously, $K_Y^2=K_X^2-1>0$. By the Riemann–Roch theorem, $\dim|-K_Y|\geq K_Y^2>0$. Suppose that there is an irreducible curve $C\subset Y$ with $K_Y\cdot C\geq 0$ and put $R=\pi(C)$. If $R$ is nonsingular at $p$, then $C=\pi^*R-E$, so 
\[
K_Y\cdot C=(\pi^*K_X+E)(\pi^* R-E)=\pi^*K_X\cdot \pi^*R-E^2=K_X\cdot R+1\leq 0,
\]
where the last inequality is caused by ampleness of $-K_X$. We see that $K_Y\cdot C=0$, $K_X\cdot R=-1$. By the Hodge index theorem, $C^2<0$, so by the adjunction formula $C^2=-2$ and $C\cong\PCone$. This means that $R$ is a $(-1)$-curve, a contradiction.
	
Now let $p$ be a singular point of $R$. Note that $R$ must be a component of some divisor $\pi_*(R')$ where $R'\in|-K_Y|$. It is easy to see that $R=\pi_*(R')$ and $p_a(R)=1$, so $p$ is either an ordinary double point or a cusp. Therefore,
\[
C=\pi^*R-2E\sim \pi^*(-K_X)-2E=-K_Y-E.
\]
Thus $-K_Y\cdot C=K_Y^2-1\geq 0$. It follows that $K_Y\cdot C=0$, $K_Y^2=1$ and $K_X^2=2$. We see that $\varphi_{|-K_X|}(R)$ touches the branch curve at $\varphi_{|-K_X|}(p)$, a contradiction.
\end{proof}

\subsection{The Weyl groups}\label{sec:weyl}

There is a powerful tool for studying the geometry of del Pezzo surfaces, namely the {\it Weyl groups}. For convenience of the reader we recall definitions and basic facts (see \cite{cubicforms}, \cite{cag}).

Let $\XC$ be a complex del Pezzo surface of degree $d\leq 6$, obtained by blowing up $\PC$ in $r=9-d$ points. The group $\Pic{\XC}\cong\ZZ^{r+1}$ has a basis $e_0,\ e_1,\ldots,e_r$, where $e_0$ is the pull-back of the class of a line on $\PC$, and $e_i$ are the classes of exceptional curves. Put
\[
\Delta_r=\{s\in\Pic(\XC):\ s^2=-2,\ s\cdot K_{\XC}=0 \}.
\]
Then $\Delta_r$ is a root system in the orthogonal complement to $K_{\XC}^{\bot}\subset\Pic(\XC)\otimes\RR$. As usual, one can associate with $\Delta_r$ the Weyl group $\WW(\Delta_r)$. By definition, $\WW(\Delta_r)$ is the subgroup of $\Ort(K_{\XC}^\bot)$ generated by reflections through the hyperplanes orthogonal to the roots $s\in\Delta_r$. Depending on degree $d$, the type of $\Delta_r$ and the size of $\WW(\Delta_r)$ are the following:

\setlength{\extrarowheight}{3pt}
\begin{table}[h!]
\caption{The Weyl groups}
\label{table:weyltable}
\begin{tabular}{|c|c|c|c|c|c|c|lllllll}
\cline{1-7}
 $d$ & 1 & 2 & 3 & 4 & 5 & 6 \\ \cline{1-7}
 $\Delta_r$ & $\E_8$ & $\E_7$ & $\E_6$ & $\D_5$ & $\A_4$ & $\A_1\times \A_2$  \\ \cline{1-7}
 $|\WW(\Delta_r)|$ & $2^{14}\cdot3^5\cdot5^2\cdot7$ & $2^{10}\cdot3^4\cdot5\cdot 7$ & $2^7\cdot3^4\cdot5$ & $2^7\cdot3\cdot 5$ & $2^3\cdot3\cdot 5$ & 12 \\ \cline{1-7} 
 \end{tabular}
\end{table}
Moreover, there are natural homomorphisms 
\[
\rho: \Aut(\XC) \rightarrow \WW({\Delta_r}),\ \ \eta: \Gamma={\rm Gal}(\CC/\RR)\rightarrow \WW({\Delta_r}),
\]
where $\rho$ is an injection for $d\leq 5$. We denote by $g^*$ the image of $g\in\Gamma\times G$ in the corresponding Weyl group.

Denote by $\mathbb{E}_r$ the sublattice of $\Pic(\XC)$ generated by the root system $\Delta_r$. For an element $g^*\in\WW(\Delta_r)$ denote by $\tr(g^*)$ its trace on $\mathbb{E}_r$. To determine whether a finite group $\Gamma\times G$ acts minimally on $\XC$, we use the well-known formula from the character theory of finite groups
\begin{equation}\label{characterformula}
{\rm rk}\Pic(\XC)^{\Gamma\times G}=1+\frac{1}{|\Gamma\times G|}\sum_{g\in \Gamma\times G}\tr(g^*).
\end{equation}
Thus the group $\Gamma\times G$ acts minimally on $\XC$ if and only if $\sum_{g\in \Gamma\times G}\tr(g^*)=0$. On the other hand, by the Lefschetz fixed point formula (see \cite[Chapter 2, \S 2C]{Hat}) for any $h\in G$ we have,
\begin{equation}\label{LefschetzFixedPointFormula}
\Eu(\XC^h)=\tr(h^*)+3,
\end{equation}
where here and later we denote by $\Eu(\cdot)$ the topological Euler characteristic.
\begin{rem}\label{rem: holom Lef}
	Note that a cyclic group always has a fixed point on a complex rational variety. This follows from the holomorphic Lefschetz fixed-point formula.
\end{rem}
Denote by $\Sp(g^*)$ the set of eigenvalues of $g^*$. For a cyclic group $\Gamma\times G\cong\langle g\rangle_n$ of order $n$ it is very easy to determine whether this group acts minimally on $\XC$.
\begin{lem}\label{cyclicminimality}
A del Pezzo surface $X$ is $\langle g\rangle_n$-minimal if and only if $1\notin\Sp(g^*)$.
\end{lem}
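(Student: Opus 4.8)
The plan is to read off minimality directly from the trace formula (\ref{characterformula}). Since $\Gamma\times G\cong\langle g\rangle_n$ is cyclic with generator $g$, and since, as observed right after (\ref{characterformula}), the surface is $\langle g\rangle_n$-minimal precisely when $\sum_{h\in\Gamma\times G}\tr(h^*)=0$, the entire statement reduces to the elementary equivalence
\[
\sum_{k=0}^{n-1}\tr\big((g^*)^k\big)=0\iff 1\notin\Sp(g^*).
\]
I would therefore compute the left-hand sum purely in terms of the eigenvalues of $g^*$ acting on $\mathbb{E}_r\otimes\CC$.

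Since $g^n=\mathrm{id}$ in $\Gamma\times G$, we have $(g^*)^n=\mathrm{id}$ in $\WW(\Delta_r)$; hence $g^*$ is diagonalizable over $\CC$ and each eigenvalue $\lambda\in\Sp(g^*)$ satisfies $\lambda^n=1$. Writing the trace as a sum over eigenvalues (with multiplicity) and interchanging the two summations yields
\[
\sum_{k=0}^{n-1}\tr\big((g^*)^k\big)=\sum_{\lambda\in\Sp(g^*)}\ \sum_{k=0}^{n-1}\lambda^k.
\]
For $\lambda=1$ the inner geometric sum equals $n$, whereas for every $n$-th root of unity $\lambda\neq 1$ it equals $(\lambda^n-1)/(\lambda-1)=0$. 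Consequently the whole expression equals $n\cdot\mu$, where $\mu$ denotes the multiplicity of the eigenvalue $1$ of $g^*$.

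It follows that $\sum_{k=0}^{n-1}\tr\big((g^*)^k\big)=0$ if and only if $\mu=0$, i.e. if and only if $1\notin\Sp(g^*)$; combined with the minimality criterion this proves the lemma. The only point requiring slight care is that the eigenvalues of $g^*$ are genuine $n$-th roots of unity even when the homomorphism into $\WW(\Delta_r)$ fails to be injective on $\langle g\rangle_n$ (so that the order of $g^*$ may be a proper divisor of $n$); this is harmless, since $(g^*)^n=\mathrm{id}$ forces $\lambda^n=1$ regardless, and the vanishing of the geometric series uses only $\lambda^n=1$ together with $\lambda\neq 1$.
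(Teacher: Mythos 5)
Your proof is correct and follows essentially the same route as the paper: reduce the lemma via formula (\ref{characterformula}) to the vanishing of $\sum_{k=0}^{n-1}\tr(g^{*k})$, then interchange the summations and evaluate the geometric series $\sum_{k=0}^{n-1}\lambda^k$, which equals $n$ for $\lambda=1$ and $0$ for any other $n$-th root of unity. Your added remark that $(g^*)^n=\mathrm{id}$ guarantees $\lambda^n=1$ even when $g\mapsto g^*$ is not injective is a sound clarification of a point the paper leaves implicit, but it does not change the argument.
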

\begin{proof}
According to the formula (\ref{characterformula}), we have to show that the sum of the traces $\tr(g^{*k})$ adds up to 0 if and only if $1\notin\Sp(g^*)$. Let $\lambda_1,\ldots, \lambda_r$ be the eigenvalues of $g^*$. We have
\[
\sum_{k=0}^{n-1}\tr(g^{*k})=\sum_{k=0}^{n-1}\sum_{i=1}^{r}\lambda_i^k=\sum_{i=1}^{r}\sum_{k=0}^{n-1}\lambda_i^k.
\]
It remains to notice that $\sum_{k=0}^{n-1}\lambda_i^k$ equals $n$ for $\lambda_i=1$ and 0 otherwise. 
\end{proof}

\subsection{del Pezzo surfaces of degree $9$}\label{sec:dp9}
Let $X$ be a real del Pezzo surface of degree 9. Then $X$ is a Severi-Brauer variety of dimension 2. As $X(\mathbb{R})\ne\varnothing$, we have $X\cong\PR$ and $G\subset {\rm PGL}_3(\mathbb{R})$. Applying Proposition \ref{PGLclassification}, we obtain the following
\begin{prop}\label{prop: dp9 main prop}
Let $X$ be a real del Pezzo surface of degree 9 and $G$ be a subgroup of odd order $n$ in the automorphism group of $X$. Then $G\subset\PGL_3(\RR)$ and $G$ is isomorphic to a cyclic group of order $n$, generated by $R_3(2\pi/n)$ (see Proposition \ref{PGLclassification} (2)).
\end{prop}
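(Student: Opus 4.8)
The plan is to reduce the statement to the group-theoretic classification already obtained in Proposition \ref{PGLclassification}(2), after first identifying the surface $X$ itself. The key geometric input is that a del Pezzo surface of degree $9$ must be isomorphic to $\PR$; once this is established, the proposition is immediate, since then $\Aut(X)=\PGL_3(\RR)$ and the odd-order hypothesis does the rest.

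First I would argue that $X\cong\PR$. Since $K_X^2=9$, the complexification satisfies $\XC\cong\PC$, so $\Pic(\XC)\cong\ZZ$ is generated by $-\tfrac13 K_{\XC}$. The Galois action fixes the canonical class and hence acts trivially on this rank-one lattice, giving $\rk\Pic(X)=1$; thus $X$ is $\RR$-minimal. As $X(\RR)\ne\varnothing$ and $d=9\geq 5$, Iskovskikh's criterion (Theorem \ref{IskovskikhCrit}) shows that $X$ is $\RR$-rational, so Theorem \ref{rminimal}(1) applies: the only $\RR$-rational minimal real surfaces are $\PR$, the quadric $Q_{3,1}$, and the Hirzebruch surfaces $\FF_n$ with $n\ne 1$. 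Among these only $\PR$ has $K_X^2=9$ (the quadric and all Hirzebruch surfaces have degree $8$), whence $X\cong\PR$. Equivalently, one may observe that a real form of $\PC$ is a two-dimensional Severi--Brauer variety, and since $\mathrm{Br}(\RR)=\ZZ/2\ZZ$ has no $3$-torsion, every such form is automatically trivial.

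It then follows that $\Aut(X)=\Aut(\PR)=\PGL_3(\RR)$, so $G$ is a finite subgroup of odd order $n$ in $\PGL_3(\RR)$. Applying Proposition \ref{PGLclassification}(2) directly yields that $G$ is cyclic of order $n$, generated up to conjugacy by $R_3(2\pi/n)$, which is exactly the assertion.

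I do not anticipate a genuine obstacle: essentially all the content lies in the identification $X\cong\PR$, and even that is forced either by the list in Theorem \ref{rminimal} or by the triviality of the relevant Brauer group. The odd-order hypothesis carries the remaining weight through Proposition \ref{PGLclassification}, where it rules out the dihedral and polyhedral subgroups of $\SO_3(\RR)$ and leaves only the cyclic groups of rotations.
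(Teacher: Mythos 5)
Your proof is correct, but the route you take to the key identification $X\cong\PR$ is genuinely different from the paper's. The paper disposes of this step in one line: a real del Pezzo surface of degree $9$ is a two-dimensional Severi--Brauer variety, and since $X(\RR)\ne\varnothing$ it is trivial (Ch\^atelet's theorem), so $X\cong\PR$, $G\subset\PGL_3(\RR)$, and Proposition \ref{PGLclassification}(2) finishes the argument exactly as you do. Your primary argument instead runs through the minimal model machinery: trivial Galois action on $\Pic(\XC)\cong\ZZ$, hence $\RR$-minimality, then Iskovskikh's criterion (Theorem \ref{IskovskikhCrit}) and the Comessatti-type classification (Theorem \ref{rminimal}) to isolate $\PR$ as the only degree-$9$ entry on the list. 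This is valid --- in fact, since $K_X^2=9$, cases (2) and (3) of Theorem \ref{rminimal} are already excluded on degree grounds ($Q_{3,1}$ and $\FF_n$ have $K^2=8$, the conic bundle case has $K^2=8-c\leq 4$, and case (2) has degree $\leq 2$), so the appeal to Iskovskikh's criterion could even be skipped --- but it invokes substantially heavier theorems than the one-line argument the paper uses. Your parenthetical alternative, that the Brauer class of a Severi--Brauer surface is $3$-torsion while $\mathrm{Br}(\RR)\cong\ZZ/2\ZZ$ has none, is the variant closest in spirit to the paper's proof, and is in fact slightly stronger: it trivializes the form without using $X(\RR)\ne\varnothing$ at all, whereas Ch\^atelet's theorem consumes the real point. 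All three routes converge on the same final reduction to Proposition \ref{PGLclassification}, where, as you note, the odd-order hypothesis eliminates everything but the cyclic rotation groups.
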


\subsection{del Pezzo surfaces of degree $8$}\label{sec:dp8}
In this subsection $X$ denotes a real del Pezzo surface of degree 8. We shall assume that $\XC\cong\PCone\times\PCone$ (the other surface of degree 8, the blow up of $\PP_\RR^2$ at one point, is never $G$-minimal), so either $X\cong Q_{3,1}$ or $X\cong Q_{2,2}$ \cite[Lemma 1.16]{kol}.
\begin{prop}\label{prop: dP8_main_prop}
Let $X$ be a real del Pezzo surface of degree 8 with ${\rm Pic}(X)^G\cong\mathbb{Z}$, where $G$ is a group of odd order. Then $G$ is linearizable (and hence is cyclic).
\end{prop}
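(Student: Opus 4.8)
The plan is to pin down the surface first, then manufacture a $G$-fixed real point and linearize by stereographic projection from it.

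\emph{Identifying $X$.} Since $\XC\cong\PCone\times\PCone$, the surface $X$ is one of the two real quadrics $Q_{3,1}$ or $Q_{2,2}$. On $Q_{2,2}$ both rulings are defined over $\RR$, so $\Gamma$ acts trivially on $\Pic(\XC)\cong\ZZ^2$ and $\Pic(Q_{2,2})\cong\ZZ^2$; the only nontrivial lattice symmetry preserving $-K_X$ is the ruling swap, which has order $2$, so a group of odd order must act trivially and $\rk\Pic(X)^G=2$, contradicting the hypothesis. Hence $X\cong Q_{3,1}$, whose two rulings are complex conjugate, giving $\Pic(X)\cong\ZZ$ (so the hypothesis $\rk\Pic(X)^G=1$ holds automatically here).

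\emph{A $G$-fixed real point.} Now $\Aut(Q_{3,1})=\mathrm{P}\Ort_{3,1}(\RR)$ acts linearly on $Q_{3,1}\subset\PP_\RR^3$. As $|G|$ is odd, the extension $1\to\{\pm I\}\to\Ort_{3,1}(\RR)\to\mathrm{P}\Ort_{3,1}(\RR)\to 1$ splits over $G$ by Schur--Zassenhaus, producing an isomorphic lift $\widetilde G\subset\Ort_{3,1}(\RR)$. Every finite subgroup of $\Ort_{3,1}(\RR)$ is conjugate into a maximal compact subgroup $\Ort(3)\times\Ort(1)$ (Cartan's theorem), and this conjugation is induced by an honest element of $\Aut(Q_{3,1})$, hence is harmless for conjugacy in $\CRR$. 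The projections of $\widetilde G$ to the $\Ort(1)=\{\pm1\}$ factor and to $\det$ land in $2$-groups, so oddness forces $\widetilde G\subset\SO(3)\times\{1\}$; it therefore fixes $e_4$ and, being an odd-order subgroup of $\SO(3)$, is cyclic (as recalled in the proof of Proposition \ref{PGLclassification}), generated by a rotation about some axis $\ell\subset\RR^3$. Then $\widetilde G$ fixes the plane $\ell\oplus\langle e_4\rangle$ pointwise, and this plane meets $Q_{3,1}$ in two real points. In particular $G$ fixes a real point $p\in Q_{3,1}(\RR)$.

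\emph{Linearization.} I would then use the stereographic projection $Q_{3,1}\dashrightarrow\PR$ from $p$, realized by blowing up $p$ and contracting the strict transforms of the two rulings through $p$. These two rulings form a complex-conjugate pair, i.e. a single real exceptional curve; since $|G|$ is odd, no $g\in G$ can interchange them (such a permutation would have order $2$), so each is $G$-invariant and the whole construction is $G$-equivariant and defined over $\RR$. The resulting birational map conjugates the regular $G$-action on $Q_{3,1}$ to a regular action on $\PR$, giving an embedding $G\hookrightarrow\PGL_3(\RR)$; thus $G$ is linearizable, and Proposition \ref{PGLclassification}(2) shows it is cyclic.

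\emph{Main obstacle.} The crux is the middle step: the automorphism group $\mathrm{P}\Ort_{3,1}(\RR)$ is noncompact, so one cannot simply average to an orthogonal action, and the real work is to show that an odd-order subgroup has a common real fixed point. Reducing to the maximal compact subgroup $\Ort(3)\times\Ort(1)$ is what makes this possible, and oddness of $|G|$ is used repeatedly --- to split the central extension, to kill the $\det$ and time-orientation components, and to rule out the ruling-swap --- so that cyclicity and the fixed point both fall out. Once the fixed point is in hand, the projection in the last step is routine.
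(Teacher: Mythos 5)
Your proposal is correct, and its crucial middle step takes a genuinely different route from the paper's. The frame is the same at both ends: ruling out $Q_{2,2}$ (the paper observes that an odd-order $G$ cannot swap the two rulings, deduces $\RR$-minimality and invokes Theorem \ref{rminimal} to get $X\cong Q_{3,1}$; your direct Picard-rank contradiction on $Q_{2,2}$ is an equivalent shortcut), and the final linearization by stereographic projection from a real fixed point, with oddness preventing the two conjugate lines through $p$ from being exchanged, is identical to the paper's. The difference is how the real fixed point is produced. The paper regards $G\subset\PGL_4(\RR)$, uses Proposition \ref{PGLclassification}(3) only for the abstract structure $G\cong G_1\times G_2$ with $G_i$ cyclic, and then argues purely topologically: the Lefschetz fixed point formula on $X(\RR)\approx\SSS^2$ gives $\Eu\bigl((\SSS^2)^{g_1}\bigr)=2$ (odd order forces orientation preservation), so a generator $g_1$ of $G_1$ has two fixed points, which the odd-order group $G_2$ cannot permute. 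You instead stay inside the actual symmetry group $\Ort_{3,1}(\RR)$: lift $G$ by Schur--Zassenhaus (the same Hall-subgroup trick as in Lemma \ref{liftinglemma}), conjugate into the maximal compact subgroup $\Ort(3)\times\Ort(1)$ by Cartan's theorem --- legitimately harmless, as you note, since the conjugating element lies in $\Ort_{3,1}(\RR)$ and hence acts on $Q_{3,1}$ --- and let oddness force $\widetilde G\subset\SO(3)\times\{1\}$. Your route buys two things: it proves cyclicity of $G$ directly (odd-order subgroups of $\SO(3)$ are cyclic) before any linearization, rather than extracting it afterwards from $\PGL_3(\RR)$, and it locates the two fixed points explicitly as the intersection of the plane $\ell\oplus\langle e_4\rangle$ with the quadric --- precisely the two poles appearing in the paper's Example following the proposition. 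The cost is the Lie-theoretic input (Cartan--Iwasawa--Malcev for the almost connected group $\Ort(3,1)$), though even this can be replaced by elementary averaging: a finite subgroup of $\Ort_{3,1}(\RR)$ preserves some positive definite form, and simultaneous reduction of the two quadratic forms conjugates it into $\Ort(3)\times\Ort(1)$. The paper's version, by contrast, needs nothing beyond facts already established in Section 2 plus the topological Lefschetz formula.
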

\begin{proof}
Since $G$ is of odd order, the two components of $\XC\cong\mathbb{P}^1_{\mathbb{C}}\times\mathbb{P}^1_{\mathbb{C}}$ are exchanged by the Galois group only. Thus $\Pic(X)^G=\Pic(X)\cong\mathbb{Z}$ and $X$ is $\mathbb{R}$-minimal. Theorem \ref{rminimal} shows, that $X\cong Q_{3,1}$, so $X(\mathbb{R})$ is homeomorphic to a sphere $\SSS^2$. Suppose that $G$ has a real fixed point $p$. Blowing it up and contracting the strict transforms of the lines passing through $p$, we see that our group $G$ is conjugate to a subgroup of $\PGL_3(\RR)$. Thus, $G$ must be cyclic by Proposition \ref{PGLclassification}.

It remains to explain why $G$ always has a real fixed point. First, let us notice that $G$ is a direct product of at most two cyclic groups.
Indeed, any automorphism of $X$ is a restriction of a projective automorphism of $\PP_\RR^3$, so we can identify automorphisms of $X$ with elements of $\PGL_4(\RR)$. By Proposition \ref{PGLclassification}, our group $G$ is a direct product of two cyclic groups, say $G_1\cong\langle g_1\rangle$ and $G_2\cong\langle g_2\rangle$.

Applying the topological Lefschetz fixed point formula, we see that
\[
\Eu((\SSS^2)^{g_1})=\sum_{k\geq 0}\tr_{H_k(\SSS^2,\RR)}g_{1*}=\tr_{H_0(\SSS^2,\RR)}g_{1*}+\tr_{H_2(\SSS^2,\RR)}g_{1*}=2
\]
Here we denote by $g_{1*}$ the induced action on homology. Note that $\tr_{H_0(\SSS^2,\RR)}g_{1*}=1$ as $\SSS^2$ is path-connected, and $\tr_{H_2(\SSS^2,\RR)}g_{1*}=1$ since $g_1$ has an odd order, hence preserves orientation of $\SSS^2$.  Thus, $(\SSS^2)^{g_1}$ consists of two points, say $p$ and $p'$. Then $G_2$ acts on the set $\{p,p'\}$ and the action is trivial, because the order of $G_2$ is odd.  
\end{proof}
\begin{ex}
	One can explicitly write the action of a cyclic group $G$ on the quadric $X\cong Q_{3,1}=\{[x:y:z:w]: x^2+y^2+z^2=w^2\}$ as
	\[
	[x:y:z:w]\mapsto[x\cos\theta+y\sin\theta: -x\sin\theta+y\cos\theta:z:w].
	\]
	This is obviously a rotation around $z$-axis that fixes two points (the North and the South poles) on the sphere.
\end{ex}

\subsection{del Pezzo surfaces of degree $6$}\label{sec:dp6}
In this subsection $X$ denotes a real del Pezzo surface of degree 6. Recall that $\XC$ is isomorphic to the surface obtained by blowing up $\PC$ in three noncollinear points $p_1,p_2,p_3$. The set of $(-1)$-curves on $\XC$ consists of six curves: the exceptional divisors of blow-up $e_i={\pi}^{-1}(p_i)$ and the strict transforms of the lines $d_{12}=\overline{p_1,p_2}$, $d_{13}=\overline{p_1,p_3}$, $d_{23}=\overline{p_2,p_3}$. In the anticanonical embedding $\XC\hookrightarrow\PP_\CC^6$ these exceptional curves form a hexagon of lines. We denote this hexagon by $\Sigma$. Obviously, $\Aut(\XC)$ preserves $\Sigma$, so there is a homomorphism
\[
\rho: \Aut(\XC)\to\Aut(\Sigma)\cong\mathcal{D}_6\cong\mathfrak{S}_3\times\ZZ/2\ZZ,
\]
where $\mathcal{D}_{6}\cong\WW(\A_1\times \A_2)$ is a dihedral group of order $12$ and $\mathfrak{S}_3$ is a symmetric group on 3 letters. The kernel ${\rm Ker}(\rho)$ is isomorphic to the torus $T\cong(\CC^*)^2$ (it comes from an automorphism of $\PC$, that fixes all the points $p_i$). In fact, one can show that $\Aut(\XC)\cong T\rtimes\mathcal{D}_6$. Put $G_T=G\cap T$, $\widehat{G}=\rho(G)$. Then we get a short exact sequence
\begin{equation}\label{eq: dP6_exact_seq}
1\longrightarrow G_T\longrightarrow G\overset{\rho}{\longrightarrow} \widehat{G}\longrightarrow 1\tag{$\star$}
\end{equation}
\begin{prop}\label{prop:dP_6_main_prop}
	Let $X$ be $G$-minimal real del Pezzo surface of degree 6, where the order of $G$ is odd. Then $\widehat{G}\cong\ZZ/3\ZZ$ and the exact sequence \eqref{eq: dP6_exact_seq} splits, i.e. $G\cong G_T\rtimes (\ZZ/3\ZZ)$.
\end{prop}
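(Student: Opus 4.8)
The plan is to first determine $\widehat{G}$ and then to establish the splitting of \eqref{eq: dP6_exact_seq} by a direct computation of a norm map on the torus $T$; the group theory alone will not suffice, so the geometry of $T$ has to be used.

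First I would determine $\widehat{G}$. Since $\mathcal{D}_6\cong\mathfrak{S}_3\times\ZZ/2\ZZ$ has order $12=2^2\cdot 3$, any subgroup of odd order is a $3$-group, hence either trivial or the unique cyclic subgroup of order $3$ inside $\mathfrak{S}_3$; so $\widehat{G}$ is trivial or $\ZZ/3\ZZ$. To rule out the trivial case I would argue as follows: if $\widehat{G}=\{\mathrm{id}\}$ then $G=G_T\subseteq T$ fixes every line of the hexagon $\Sigma$, and since these six $(-1)$-curves generate $\Pic(\XC)$, the group $G$ acts trivially on all of $\Pic(\XC)$. Consequently $\rk\Pic(X)^G=\rk\Pic(\XC)^{\Gamma\times G}=\rk\Pic(\XC)^{\Gamma}=\rk\Pic(X)$. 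But a real del Pezzo surface of degree $6$ is never $\RR$-minimal (it is none of the surfaces of Theorem \ref{rminimal}), so $\rk\Pic(X)\geq 2$, contradicting the $G$-minimality of $X$, which forces $\Pic(X)^G\cong\ZZ$ (Theorem \ref{classification}, case \textbf{D}). Alternatively, any real exceptional curve would then be $G$-invariant and Lemma \ref{mainlemma} would immediately give $\rk\Pic(X)^G>1$. Either way $\widehat{G}\cong\ZZ/3\ZZ$.

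Next I would prove that \eqref{eq: dP6_exact_seq} splits. It is worth stressing that this does \emph{not} follow from Schur--Zassenhaus, since $G_T$ may well have order divisible by $3$; the argument has to use the structure of $T$. Let $\tau$ generate $\widehat{G}=\langle\tau\rangle$ and pick $g\in G$ with $\rho(g)=\tau$. Using the splitting $\Aut(\XC)\cong T\rtimes\mathcal{D}_6$, I would write $g=t\tau$ with $t\in T$, where $\tau$ now also denotes its order-$3$ lift in $\mathcal{D}_6$. Since $\tau^3=\mathrm{id}$, expanding gives $g^3=t\,(\tau t\tau^{-1})\,(\tau^2 t\tau^{-2})=:N(t)$, the norm of $t$ for the $\langle\tau\rangle$-action on $T$.

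The key step, and essentially the only computation, is that this norm is trivial. Indeed $\tau$ permutes $p_1,p_2,p_3$ cyclically, so it acts on the maximal torus $T=\{\diag(a,b,c)\bmod\text{scalars}\}\subset\PGL_3(\CC)$ by cyclically permuting the diagonal entries, and therefore
\[
N(t)=\diag(a,b,c)\,\diag(c,a,b)\,\diag(b,c,a)=\diag(abc,abc,abc)=\mathrm{id}\quad\text{in }\PGL_3(\CC).
\]
Hence $g^3=\mathrm{id}$, so $g$ has order exactly $3$ and $\langle g\rangle\cap G_T=\{\mathrm{id}\}$; thus $\langle g\rangle$ is a complement to $G_T$ and $G\cong G_T\rtimes\langle g\rangle\cong G_T\rtimes(\ZZ/3\ZZ)$, as claimed. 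The main obstacle is precisely this splitting: everything rests on the vanishing of the torus norm under the cyclic permutation, which is what makes every lift of $\tau$ an honest element of order $3$.
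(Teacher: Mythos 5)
Your proposal is correct, and while the first half coincides with the paper, your proof of the splitting takes a genuinely different route. For $\widehat{G}\cong\ZZ/3\ZZ$ the paper argues exactly as you do: since a real del Pezzo surface of degree $6$ with real points is never $\RR$-minimal (Theorem \ref{rminimal}), and a $G$ contained in $T$ acts trivially on $\Pic(\XC)$, $G$-minimality would collapse to $\RR$-minimality, which fails. (One small imprecision here: $G$-minimality alone does not force $\Pic(X)^G\cong\ZZ$ via Theorem \ref{classification} --- a del Pezzo surface could a priori be minimal as a conic bundle with $\rk\Pic(X)^G=2$ --- so the clean way to close this case is the one you sketch anyway: with $G$ acting trivially on $\Pic(X)$, every birational $\RR$-morphism is automatically $G$-equivariant, contradicting $G$-minimality.) For the splitting, the paper, like you, reduces to showing that any $h\in G$ with $\rho(h)$ generating $\widehat{G}$ satisfies $h^3=\mathrm{id}$, but it proves this geometrically: it picks a fixed point $q\in X(\CC)$ of $h$ (holomorphic Lefschetz, Remark \ref{rem: holom Lef}), notes $q\notin\Sigma$ so that blowing up $q$ yields a del Pezzo surface of degree $5$ (Lemma \ref{lem:blow-up-down-dP}), then blows down the resulting $\langle h\rangle$-orbit of three disjoint $(-1)$-curves to $\PCone\times\PCone$, where $h^3$ fixes three points on the diagonal and hence is the identity. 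You instead compute inside the semidirect product $\Aut(\XC)\cong T\rtimes\mathcal{D}_6$: writing $h=t\tau$ with $\tau$ of order $3$ gives $h^3=N(t)$, and the norm vanishes identically because the $\tau$-orbit of the diagonal entries multiplies out to the scalar $abc$, which is trivial in $\PGL_3(\CC)$ (equivalently, $\lambda_1\lambda_2\lambda_3=1$ on the torus $\{\lambda_1\lambda_2\lambda_3=1\}$); your observation that Schur--Zassenhaus is unavailable when $3$ divides $|G_T|$ is exactly the difficulty the paper's direct argument also circumvents. Your computation is shorter and makes transparent \emph{why} every lift of a generator has order $3$, but it leans on the splitting $\Aut(\XC)\cong T\rtimes\mathcal{D}_6$ and on identifying the conjugation action of the order-$3$ rotation on $T$ as a coordinate cycle --- facts the paper states without proof --- whereas the paper's blow-up/blow-down argument uses only the hexagon $\Sigma$ and standard facts about del Pezzo surfaces. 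Both are complete; yours trades geometric input for structural input about $\Aut(\XC)$.
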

\begin{proof}
	Note that $\widehat{G}\ne {\rm id}$, since $X$ is not $\mathbb{R}$-minimal by Theorem \ref{rminimal}. Thus $\widehat{G}\cong\ZZ/3\ZZ$. Now let us show that the exact sequence \eqref{eq: dP6_exact_seq} splits. To construct a splitting map $\xi: \widehat{G}\to G$, it suffices to find $h\in G$ such that $\rho(h)$ generates $\widehat{G}\cong\ZZ/3\ZZ$ and $h^3={\rm id}$ (then one can put $\xi(\rho(h))=h$). Since $\rho$ is surjective, we can easily find $h\in G$ such that $\rho(h)$ generates $\widehat{G}$. Let us then show that $h^3={\rm id}$ automatically. Pick up any point $q\in X(\CC)$ which is fixed by $h$ (such a point always exists by Remark \ref{rem: holom Lef}) and blow it up. Note that $q\notin\Sigma$, so by Lemma \ref{lem:blow-up-down-dP}, the obtained surface is a del Pezzo surface of degree 5. Moreover, it has 3 disjoint $(-1)$-curves forming one $\langle h\rangle$-orbit. Blowing this orbit down, we get 3 points on the diagonal of $\PCone\times\PCone$ which are fixed by $h^3$. It follows that $h^3={\rm id}$. Proposition \ref{prop:dP_6_main_prop} now is proved.
\end{proof}

From now on, until the end of this section, we assume that $X$ satisfies the conditions of Proposition \ref{prop:dP_6_main_prop}. Clearly, $\eta(\Gamma)=\ZZ/2\ZZ$ (otherwise all $(-1)$-curves are real, while there is a disconnected orbit of the $G$-action on the set of these curves). There are three principally distinct ways of the Galois group $\Gamma$ action on the hexagon (see Fig. \ref{fig:hexagon}).
\begin{figure}[h!]
\centering
	\begin{subfigure}{.35\textwidth}
	  \centering
	  \includegraphics[width=.6\linewidth]{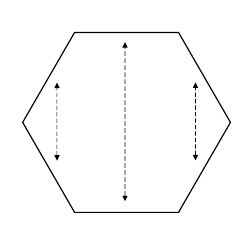}
	  \caption{}
	  \label{fig:a}
	\end{subfigure}%
	\begin{subfigure}{.35\textwidth}
		\centering
		\includegraphics[width=.6\linewidth]{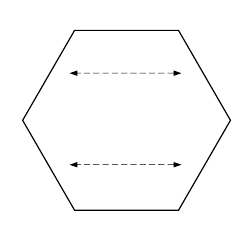}
		\caption{}
		\label{fig:b}
	\end{subfigure}%
	\begin{subfigure}{.35\textwidth}
	  \centering
	  \includegraphics[width=.6\linewidth]{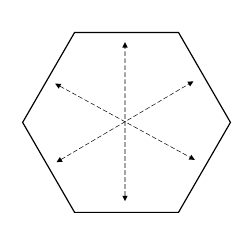}
	  \caption{}
	  \label{fig:c}
	\end{subfigure}
\caption{Galois group acting on the set of exceptional curves}
\label{fig:hexagon}
\end{figure}
Since neither the action of type (A) nor the action of type (B) commutes with ($\ZZ/3\ZZ$)-action, the complex conjugation acts as in Fig. \ref{fig:c}. Then $\sigma^*(e_0)=2e_0-e_1-e_2-e_3=-K_X-e_0$, $\sigma^*(e_0-e_i)=-K_X-e_0-(e_0-e_j-e_k)=e_0-e_i$, so the pencil of conics $|e_0-e_i|$ defines a map $\varphi_i: X\to\PRone$ over $\RR$. The product map $\varphi_1\times\varphi_2\times\varphi_3$ embeds $X$ into $\PRone\times\PRone\times\PRone$ and the image is a divisor of 3-degree (1,1,1). Hence
\begin{equation}\label{eq: trilinearform}
X=\Big\{[x_1:x_2]\times[y_1:y_2]\times[z_1:z_2]\in \PRone\times\PRone\times\PRone: {\rm F}=\sum_{i,j,k=1}^2a_{ijk}x_iy_jz_k=0,\ a_{ijk}\in\RR \Big\}\tag{$\star\star$}.
\end{equation}
According to \cite[Theorem 2]{trilinear}, any binary trilinear form ${\rm F}$ is equivalent over $\RR$ (i.e. there is a nondegenerate change of variables on each factor) to one of the following canonical forms:
\begin{description}
	\item[(a)] $x_1y_1z_1+x_2y_2z_2;$
	\item[(b)] $x_1y_1z_1+x_2y_1z_2+x_2y_2z_1;$
	\item[(c)] $x_1y_1z_1+x_1y_2z_2+x_2y_1z_2-x_2y_2z_1;$
	\item[(d)] $x_1y_1z_1+x_1y_2z_2;$
	\item[(e)] $x_1y_1z_1.$
\end{description}
It is easy to check that forms (b), (d), (e) define singular surfaces, while (a) and (c) are smooth. On the other hand, all $(-1)$-curves on the surface (a) are real, contradicting our observation that $\Gamma$ acts as in Fig. \ref{fig:c}. Thus, we may assume that $X$ is given by the equation (c). 

\begin{rem}\label{rem: topology_of_dP6}
	Let us clarify the topology of $X(\RR)$. A real del Pezzo surface of degree 6 is isomorphic to one of the following surfaces: $\PR$ blown up at $a> 0$ real points and $b\geq 0$ pairs of conjugate points for some $a+2b=3$ (then $X(\RR)\approx\#(a+1)\RR\PP^2$), $Q_{3,1}$ blown up at a pair of conjugate points (so $X(\RR)\approx\SSS^2$), or $Q_{2,2}$ blown up at a pair of conjugate points (then $X(\RR)\approx\TT^2=\SSS^1\times\SSS^1$) \cite[Proposition 5.3]{kol}. As we saw earlier, the complex conjugation acts on the set of $(-1)$-curves as in Fig. \ref{fig:c}. This immediately gives $X(\RR)\approx\TT^2$. Indeed, $X$ does not dominate $\PR$ since there are no real $(-1)$-curves on $X$. On the other hand, $X(\RR)$ cannot be a sphere because otherwise there would be two pairs of conjugate intersecting $(-1)$-curves (as in Fig. \ref{fig:a}).
\end{rem}

\begin{prop}\label{prop: dP6 linearization}
Assume the conditions of Proposition \ref{prop:dP_6_main_prop} are satisfied. Then \[G\cong (\ZZ/n\ZZ\times\ZZ/m\ZZ)\rtimes (\ZZ/3\ZZ)\] for some odd integer numbers $n,m\geq 1$. This group is linearizable if and only if $n=m=1$.
\end{prop}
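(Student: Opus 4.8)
The plan is to read off the abstract structure of $G$ directly from the split exact sequence \eqref{eq: dP6_exact_seq}, and then to reduce the linearizability statement to one group-theoretic computation together with a single explicit geometric construction.

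First I would describe the torus part $G_T=G\cap T$. Since $T\cong(\CC^*)^2$, every finite subgroup of $T$ is isomorphic to a product of at most two cyclic groups; thus $G_T\cong\ZZ/n\ZZ\times\ZZ/m\ZZ$, and because $|G|$ is odd the integers $n,m$ are odd. Combining this with the splitting $G\cong G_T\rtimes\widehat{G}$ and $\widehat{G}\cong\ZZ/3\ZZ$ from Proposition \ref{prop:dP_6_main_prop} immediately gives $G\cong(\ZZ/n\ZZ\times\ZZ/m\ZZ)\rtimes(\ZZ/3\ZZ)$, which is the first assertion.

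For the criterion, the key remark is that by Proposition \ref{PGLclassification}(2) every odd-order subgroup of $\PGL_3(\RR)$ is cyclic; hence a linearizable $G$ must be cyclic, and it is enough to prove that $G$ is cyclic if and only if $n=m=1$. To this end I would compute the conjugation action of $\widehat{G}$ on $T$. Writing $T=(\CC^*)^3/\CC^*$ with $\widehat{G}\cong\ZZ/3\ZZ\subset\mathfrak{S}_3\subset\WW(\A_1\times\A_2)\cong\Aut(\Sigma)$ acting by cyclic permutation of the three coordinates, a short calculation shows that the fixed subgroup is $T^{\widehat{G}}=\{(1:\omega:\omega^2):\omega^3=1\}\cong\ZZ/3\ZZ$. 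Consequently, if $G_T\neq 1$ then either $\widehat{G}$ acts nontrivially on $G_T$, so that $G$ is nonabelian, or $\widehat{G}$ acts trivially, in which case $G_T\subseteq T^{\widehat{G}}$ forces $G_T\cong\ZZ/3\ZZ$ and $G\cong(\ZZ/3\ZZ)^2$. In both cases $G$ fails to be cyclic, hence is not linearizable; this proves the implication ``linearizable $\Rightarrow n=m=1$'' and, as a by-product, produces the non-cyclic group $(\ZZ/3\ZZ)^2$ mentioned in the introduction.

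It remains to linearize $G\cong\ZZ/3\ZZ$ in the case $n=m=1$. Since $X$ is $G$-minimal, nothing can be contracted $G$-equivariantly on $X$ itself, so the desired map $X\dashrightarrow\PR$ must be a genuine Sarkisov link: I would pick a fixed point $q$ of a generator $h$ (its existence over $\CC$ is guaranteed by Remark \ref{rem: holom Lef}, and I expect a Lefschetz-number count on $X(\RR)\approx\TT^2$ to provide a real one), blow it up via Lemma \ref{lem:blow-up-down-dP}, and then contract a suitable $\sigma$- and $G$-invariant configuration of disjoint exceptional curves to land on $\PR$ with $h$ acting as the rotation $R_3(2\pi/3)$. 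I expect this explicit construction to be the main obstacle: the two implications of the criterion follow formally from the computation of $T^{\widehat{G}}$ and Proposition \ref{PGLclassification}, whereas the linearization requires choosing a link that is simultaneously defined over $\RR$, $G$-equivariant, and terminates at the standard linear action, all of which must be checked by hand.
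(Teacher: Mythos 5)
Your structural analysis and the necessity direction are correct and essentially coincide with the paper's argument: $G_T$ is a finite subgroup of a two-dimensional torus, hence isomorphic to $\ZZ/n\ZZ\times\ZZ/m\ZZ$ with $n,m$ odd, and the computation of the fixed subgroup $T^{\widehat{G}}\cong\ZZ/3\ZZ$ (the three fixed points of $\lambda_1\mapsto\lambda_2\mapsto\lambda_3$) shows that a linearizable --- hence, by Proposition \ref{PGLclassification}, cyclic --- group $G$ would force $G_T$ to be simultaneously of order coprime to $3$ and contained in $T^{\widehat{G}}$, so $G_T=1$. This is exactly the paper's reasoning, stated if anything a bit more carefully.

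The genuine gap is the sufficiency direction ($n=m=1$ implies linearizable), which you only sketch and explicitly defer, and which is where the paper does most of the work. Two concrete problems. First, your route to a real fixed point --- a Lefschetz count on $X(\RR)\approx\TT^2$ --- does not work as stated: for a self-map of a torus the Lefschetz number vanishes whenever the map acts trivially on $H_1(\TT^2)$, so you would additionally have to prove that a generator acts on $H_1$ with order $3$ (trace $-1$). The paper avoids this entirely: it first shows the fixed locus on $\XC$ is zero-dimensional (a fixed curve would meet the ample hexagon $\Sigma$, while $\tau$ rotates $\Sigma$ without fixed points on it), computes $\Eu(\XC^\tau)=2+\tr_{\Pic(\XC)}(\tau^*)=3$ from $e_0\mapsto e_0$, $e_1\mapsto e_2\mapsto e_3\mapsto e_1$, and concludes that $\sigma$ permutes an odd number of fixed points, so at least one is real. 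Second, your plan to contract a ``suitable $\sigma$- and $G$-invariant configuration'' from the blown-up degree-$5$ surface directly to $\PR$ is unjustified and is not how the argument goes: in the paper, the three disjoint real $(-1)$-curves on $Y$ (where $Y(\RR)\approx\TT^2\#\RR\PP^2\approx\#3\RR\PP^2$) contract to a degree-$8$ surface $Z$, one must then exclude the case $Z\cong Q_{2,2}$ (there $G$ would fix all three blown-down points on the diagonal, hence be trivial), and in the remaining case $Z\cong Q_{3,1}$ the linearization is imported from Proposition \ref{prop: dP8_main_prop}: a real fixed point on the sphere, blow-up, and contraction of the strict transforms of the lines through it. None of these steps (real fixed point, topology bookkeeping identifying $Y(\RR)$, the degree-$8$ dichotomy and elimination of $Q_{2,2}$, the final link through $Q_{3,1}$) appears in your proposal, so the ``if'' half of the criterion --- and with it half of the proposition --- remains unproved.
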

\begin{proof}
Recall that there is a single isomorphism class of complex del Pezzo surfaces of degree 6, since any three	non-collinear points on $\PC$ are $\PGL_3(\CC)$-equivalent. Thus, we can view $\XC$ as a surface in $\PCone\times\PCone\times\PCone$ defined by the equation $x_1y_1z_1=x_2y_2z_2$. It is a compactification of the standard torus $T=(\CC^*)^2=\{(\lambda_1,\lambda_2,\lambda_3)\in(\CC^*)^3: \lambda_1\lambda_2\lambda_3=1\}$ whose real form\footnote{Recall that if a pair $(X,\sigma)$ of a complex projective variety and an antiholomorphic involution $\sigma$ on $X$ is given, then $Y=X/\langle\sigma\rangle$ is a scheme over $\RR$ such that $Y_\CC\cong X$. This scheme $Y$ is called a real form of $X$.}, which we denote by $T_\RR$, is $\SO_2(\RR)\times\SO_2(\RR)$ (see Remark \ref{rem: topology_of_dP6}; on the tori over reals see \cite[10.1]{Voskr}). In particular, $G_T\cong\ZZ/n\ZZ\times\ZZ/m\ZZ$ for some odd integer numbers $n$, $m$. 
	
Now let us prove the second part of the Proposition. Assume that $G$ is linearizable. Then $G$ is a cyclic group (see Proposition \ref{prop: dp9 main prop}), so $G_T$ must be a cyclic group of order coprime to 3, and the action of $\widehat{G}$ on $G_T$ must be trivial. It is clear from the description above that there are exactly three $\widehat{G}$-fixed points on the torus $T(\CC)$, namely the fixed points of the transformation $\lambda_1\mapsto\lambda_2\mapsto\lambda_3$. Thus, $G_T\cong\ZZ/3\ZZ$. In particular, we see that $G$ is not cyclic, hence not linearizable.

Now assume that $n=m=1$, i.e. $G\cong\ZZ/3\ZZ$ and a generator of $G$ acts on $\Sigma$ ``rotating'' it by $2\pi/3$. Let us denote this generator by $\tau$. We claim that $\tau$ has a {\it real} fixed point. Clearly, a fixed point cannot lie on $\Sigma$, since $\tau$ rotates the hexagon by $2\pi/3$. Besides, $G$ has a zero-dimensional fixed point locus on $\XC$ (otherwise, the curve of fixed points meets $\Sigma$, which is an ample divisor).

Applying the Lefschetz fixed point formula, we obtain
\[
\Eu(\XC^\tau)=\sum_{k=0}^4(-1)^k \tr_{H^k (X,\RR)}(\tau^*)=\tr_{H^0(X,\RR)}(\tau^*)+\tr_{\Pic(\XC)}(\tau^*)+\tr_{H^4(X,\RR)}(\tau^*)=2+\tr_{\Pic(\XC)}(\tau^*).
\]
As $\tau$ acts by $e_0\mapsto e_0$, $e_1\mapsto e_2$, $e_2\mapsto e_3$, $e_3\mapsto e_1$, we have $\tr_{\Pic(\XC)}(\tau^*)=1$ and $\Eu(\XC^\tau)=3$. Since the fixed point locus is zero-dimensional, the number of fixed points equals the Lefschetz number. Finally, at least one of those three fixed points must be real. 

Denote by $Y$ the blow-up of this point. By Lemma \ref{lem:blow-up-down-dP}, $Y$ is a del Pezzo surface of degree 5. Topologically, each blowing up at a real point is connected sum with $\RR\PP^2$, so $Y(\RR)\approx \TT^2\#\RR\PP^2$ by Remark \ref{rem: topology_of_dP6}. Since $Y(\RR)$ is nonorientable and $\Eu(\TT^2\#\RR\PP^2)=\Eu(\TT^2)+\Eu(\RR\PP^2)-2=-1$, we get $Y(\RR)\approx \#3\RR\PP^2$. Note that there are 3 disjoint real $(-1)$-curves after blow-up. Blowing them down, we obtain a del Pezzo surface $Z$ of degree 8 either with $Z(\RR)\approx \SSS^2$ (then $Y$ is isomorphic to $Z\cong Q_{3,1}$ blown up at 3 real points), or $Z(\RR)\cong \TT^2$ (then $Y$ is isomorphic to $Z\cong Q_{2,2}\cong\PRone\times\PRone$ blown up at one real point and a pair of complex conjugate). In the first case, as we saw earlier, $G$ has a real fixed point on $Q_{3,1}$. The second case is just impossible\footnote{Basically, it is already clear from the configuration of lines on $Y$.} since $G$ must fix all 3 points on the diagonal of $Z\cong Q_{2,2}$ ($G$ cannot switch complex conjugate points), hence must be trivial. So, $G$ has a real fixed point on $Z\cong Q_{3,1}$. Blowing it up and contracting the strict transforms of the lines passing through it, we conjugate $G$ to a subgroup of $\PGL_3(\RR)$. 
\end{proof}

\begin{rem}[\bf Two non-conjugate embeddings of $(\ZZ/3\ZZ)^2$ into $\CRR$]
	We have the following two actions of the group $G\cong(\ZZ/3\ZZ)^2$ on the real rational surfaces $X$ and $Y$:
	\begin{description}
		\item[1] $X$ is a conic bundle $\PRone\times\PRone$ with $\rk\Pic(X)^G=2$ (case (2) of Theorem \ref{conicbundle});
		\item[2] $Y$ is a real del Pezzo of degree 6 with $\rk\Pic(Y)^G=1$ (corresponding to the case when $\widehat{G}$ acts trivially on $G_T$).
	\end{description}
	The images of $G$ under these embeddings are not conjugate in $\CRR$. It can be deduced from the classification of links \cite[Theorem 2.6]{isk-1}. Namely, we need a link of type (I) or (III) starting at a point of degree 3, but the cited theorem shows that there are no such links. 
\end{rem}

\begin{ex}
	There is an infinite series of non-linearizable subgroups of odd order in $\CRR$. For example, take $G_T$ to be the group of points of order $n$ in $T_\RR(\RR)\cong\SO_2(\RR)\times\SO_2(\RR)$. Clearly, this group is isomorphic to $(\ZZ/n\ZZ)^2$ and normalized by $\widehat{G}$. The whole group $G\cong (\ZZ/n\ZZ)^2\rtimes(\ZZ/3\ZZ)$ is not linearizable by Proposition \ref{prop: dP6 linearization}.
\end{ex}

\begin{ex}
	Let us give an explicit example of an automorphism $\tau\in\Aut(X)$ such that $\langle\tau\rangle\cong\ZZ/3\ZZ$ acts minimally on the surface $X$ given by the polynomial (c). Namely, consider the map
	\[
	\tau_0\in\Aut(\PRone\times\PRone\times\PRone),\ \ \tau_0: [x_1:x_2]\times[y_1:y_2]\times[z_1:z_2]\mapsto [y_1:y_2]\times[z_1:-z_2]\times[x_1:-x_2]
	\]
	and denote by $\tau$ its restriction to $X$. Let $L_k^{\pm}$ denote the equations of the two (complex conjugate) singular fibres of the conic bundle obtained by projecting to the $k$-th factor in \eqref{eq: trilinearform}. The equations $L_k^{\pm}$ are:
	\begin{itemize}
		\item[] $L_1^\pm: y_1z_1+y_2z_2\pm i(y_1z_2-y_2z_1)=0$; 
		\item[] $L_2^\pm: x_1z_1+x_2z_2\pm i(x_1z_2-x_2z_1)=0$;
		\item[] $L_3^\pm: x_1y_1-x_2y_2\pm i(x_1y_2+x_2y_1)=0$.
	\end{itemize}
	It is immediately checked that $\tau^3={\rm id}$, $\tau(L_1^\pm)=L_2^\pm$, $\tau(L_2^\pm)=L_3^\pm$ and $\tau(L_3^\pm)=L_1^\pm$. The fixed locus consists of three points $[t:1]\times[t:1]\times[-t:1]$, $t\in\{0,\pm\sqrt{3}\}$.
\end{ex}

\subsection{del Pezzo surfaces of degree $5$}\label{sec:dp5}

In this subsection $X$ denotes a real del Pezzo surface of degree 5. Recall that $\XC$ is the blow-up of $\PC$ at four points $p_1,p_2,p_3,p_4$ in general position. Let $e_i$ be the exceptional divisor over the point $p_i$ and $d_{ij}$ be the proper transform of the line passing through the points $p_i$ and $p_j$. It is classically known that $\Aut(\XC)\cong\WW(\A_4)\cong\mathfrak{S}_5$ \cite[8.5.4]{cag}. Thus either $G\cong \ZZ/3\ZZ$, or $G\cong \ZZ/5\ZZ$. 

\begin{prop}
Let $G\cong \ZZ/3\ZZ$. Then $X$ is not $G$-minimal.
\end{prop}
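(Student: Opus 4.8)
**The plan is to show that a $\mathbb{Z}/3\mathbb{Z}$-action on a degree $5$ del Pezzo surface always fixes a real point, and then blow it up to reduce to a lower-degree surface where minimality fails.**

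First I would use the Weyl-group machinery from subsection \ref{sec:weyl}. Since $\Aut(\XC)\cong\mathfrak{S}_5$ acts faithfully via $\rho$, a generator $g$ of $G\cong\ZZ/3\ZZ$ corresponds to a $3$-cycle in $\mathfrak{S}_5$ acting on $\Pic(\XC)\cong\ZZ^5$. I would compute $\tr(g^*)$ on the root lattice $\A_4$ (equivalently, the trace on $\Pic(\XC)$), which for a $3$-cycle permuting three of the $e_i$ and fixing the rest gives $\tr(g^*)=1$. By Lemma \ref{cyclicminimality}, minimality would require $1\notin\Sp(g^*)$; but a $3$-cycle on $\A_4$ visibly has $1$ as an eigenvalue (the fixed directions), so already at the level of the single generator $g$ we have $1\in\Sp(g^*)$, and the surface is not $\langle g\rangle$-minimal over $\CC$.

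The subtlety is that Lemma \ref{cyclicminimality} is a statement about the \emph{complex} surface and the group $\langle g\rangle$, whereas $G$-minimality for the real surface $X$ concerns the product group $\Gamma\times G$ and the formula \eqref{characterformula}. So the cleaner route, and the one I would actually carry out, is geometric: by Remark \ref{rem: holom Lef} the generator $\tau$ of $G$ has a fixed point on $\XC$, and since $G$ has odd order the fixed locus is $\Gamma$-stable, so I would apply the Lefschetz fixed point formula \eqref{LefschetzFixedPointFormula} exactly as in the proof of Proposition \ref{prop: dP6 linearization}: compute $\Eu(\XC^\tau)=2+\tr_{\Pic(\XC)}(\tau^*)=3$, conclude the fixed locus is three isolated points, and observe that the complex conjugation $\sigma$ permutes these three points, hence fixes at least one, giving a real fixed point $q\in X(\RR)$.

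Finally I would blow up $q$. Since $K_X^2=5>1$ and $q$ does not lie on any $(-1)$-curve (the fixed point avoids the $\tau$-orbit of exceptional curves, as a curve of fixed points would meet the ample anticanonical divisor), Lemma \ref{lem:blow-up-down-dP} guarantees that $Y=\Bl_q X$ is a del Pezzo surface of degree $6$ carrying a $G$-action. The induced $\ZZ/3\ZZ$-action on $Y$ produces an orbit of disjoint $(-1)$-curves, or more directly one exhibits a $G$-invariant exceptional curve on $X$ itself; in either case $\rk\Pic(X)^G>1$ by Lemma \ref{mainlemma}, contradicting $G$-minimality. \textbf{The main obstacle I anticipate is the verification that $q$ can be chosen off every $(-1)$-curve}, i.e.\ confirming that the zero-dimensional fixed locus is disjoint from the ten exceptional lines; this requires the ampleness argument (a positive-dimensional fixed component would intersect $-K_X$) together with a check that none of the three isolated fixed points is forced onto a line by the combinatorics of the $3$-cycle acting on the ten $(-1)$-curves.
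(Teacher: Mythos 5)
Your opening Weyl-group observation is fine as far as it goes (and you correctly note it only rules out $\langle g\rangle$-minimality of $\XC$, not $G$-minimality of $X$), but the geometric argument you actually carry out has two fatal errors. First, the Lefschetz count is wrong: on a degree $5$ del Pezzo surface $\Pic(\XC)$ has rank $5$, and the unique class of order-$3$ elements acts as a $3$-cycle, e.g.\ $e_0\mapsto e_0$, $e_1\mapsto e_2\mapsto e_3\mapsto e_1$, $e_4\mapsto e_4$, so $\tr_{\Pic(\XC)}(\tau^*)=2$ (you used the value $1$, which is the trace on the rank-$4$ lattice $\mathbb{E}_4$, transplanting the degree-$6$ computation where $\Pic$ itself has rank $4$). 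Hence $\Eu(\XC^\tau)=4$, and with \emph{four} fixed points the parity trick collapses: a priori $\sigma$ could swap them in two pairs, so no real fixed point is guaranteed. Moreover your anticipated verification that the fixed points avoid the ten lines is false: in the model where $\tau$ lifts $[x:y:z]\mapsto[y:z:x]$ permuting $p_1,p_2,p_3$ and fixing $p_4$, the differential at $p_4$ has distinct eigenvalues $\omega_3,\overline{\omega}_3$, so two of the four fixed points lie \emph{on} the exceptional curve $e_4$.

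Second, and decisively, the overall strategy cannot work: blowing up a fixed point can never contradict $G$-minimality of $X$, since minimality concerns birational $G$-morphisms \emph{out of} $X$; applying Lemma \ref{mainlemma} to $Y=\Bl_q X$ (which has degree $4$, not $6$ --- blowing up drops $K^2$) only gives $\rk\Pic(Y)^G>1$, which is automatic and says nothing about $\Pic(X)^G$. Indeed Proposition \ref{prop: dp5 order 5} exhibits a \emph{minimal} order-$5$ action on a degree $5$ surface with two fixed points, so ``real fixed point $\Rightarrow$ not minimal'' is simply false; in the paper fixed points serve the opposite purpose (linearizing a minimal action). The clause you mention only in passing --- ``one exhibits a $G$-invariant exceptional curve on $X$ itself'' --- is in fact the entire proof, and it is the paper's: $\mathfrak{S}_5$ acts on the ten lines as on $2$-element subsets of $\{1,\dots,5\}$ (the Petersen graph), a $3$-cycle fixes exactly one such pair, so there is a unique $\tau$-invariant $(-1)$-curve; uniqueness together with $\sigma\tau=\tau\sigma$ forces this curve to be real, and Lemma \ref{mainlemma} concludes. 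Alternatively, your Weyl-group opening can be completed honestly via formula \eqref{characterformula} applied to $\Gamma\times G$: since $\sigma^*$ is either trivial or the unique transposition commuting with the $3$-cycle $g^*$, the averaged trace on $\mathbb{E}_4$ gives $\rk\Pic(\XC)^{\Gamma\times G}=3$ or $2$, never $1$. As written, however, your proposal does not reach the conclusion.
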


\begin{proof}

Recall that there are exactly ten $(-1)$-curves on $\XC$. We claim that there is exactly one $G$-invariant $(-1)$-curve on $X$ (in particular, this curve is real). Indeed, one can see it on the graph of exceptional curves on $\XC$. The incidence graph of the set of these 10 lines is the famous Petersen
graph (see Fig. \ref{fig:3DPetersen} for its <<3D>> form). Our group $G\cong\ZZ/3\ZZ$ acts on this tetrahedron by simply rotating it. It remains to use Lemma \ref{mainlemma}.

\begin{figure}[h!]
	\centering
	\begin{subfigure}{.49\textwidth}
		\centering
		\includegraphics[width=.64\linewidth]{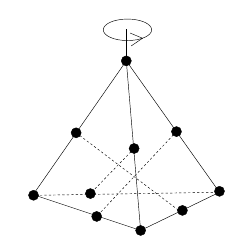}
		\caption{``3D'' form}
		\label{fig:3DPetersen}
	\end{subfigure}
	\begin{subfigure}{.49\textwidth}
		\centering
		\includegraphics[width=.64\linewidth]{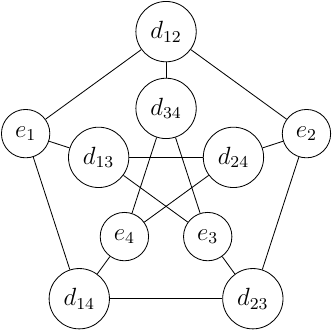}
		\caption{``Classic'' form}
		\label{fig:ClassicPetersen}
	\end{subfigure}
	\caption{Graph of $(-1)$-curves on del Pezzo surface of degree 5}
	\label{pic:hexagon}
\end{figure}

\end{proof}

\begin{lem}\label{lem:DP5FixedPoints}
	Let $X$ be a real del Pezzo surface of degree 5 and $g\in\Aut(X)$ be an automorphism of order 5 acting minimally on $X$. Then $g$ has exactly two fixed points on $\XC$ and these points do not lie on the $(-1)$-curves on $\XC$.
\end{lem}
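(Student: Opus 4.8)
The plan is to combine the Lefschetz fixed point formula (\ref{LefschetzFixedPointFormula}) with the representation theory of the order-$5$ element in the Weyl group $\WW(\A_4)\cong\mathfrak{S}_5$. First I would determine the eigenvalues of $g^*$ on $\mathbb{E}_4$. Since $g$ acts minimally, Lemma \ref{cyclicminimality} forces $1\notin\Sp(g^*)$; as $\rho$ is injective for $d\leq 5$, the element $g^*$ is an order-$5$ element of $\mathfrak{S}_5$, i.e. a $5$-cycle, acting on the rank-$4$ lattice $\mathbb{E}_4=K_{\XC}^\bot$. The only faithful action of a $5$-cycle on a $4$-dimensional space with $1\notin\Sp(g^*)$ is the one whose eigenvalues are the four primitive fifth roots of unity $\zeta_5,\zeta_5^2,\zeta_5^3,\zeta_5^4$. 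Hence $\tr(g^*)=\zeta_5+\zeta_5^2+\zeta_5^3+\zeta_5^4=-1$, and formula (\ref{LefschetzFixedPointFormula}) gives
\[
\Eu(\XC^g)=\tr(g^*)+3=-1+3=2.
\]

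Next I would argue that the fixed locus is zero-dimensional, so that this Euler characteristic literally counts fixed points. If $\XC^g$ contained a curve $D$, then since $-K_X$ is ample the curve $D$ would meet every $(-1)$-curve, and one checks (using that $g$ permutes the ten $(-1)$-curves as the induced $5$-cycle, which has no fixed curve among them) that a positive-dimensional fixed locus is incompatible with the eigenvalue data on $\Pic(\XC)$; more directly, a smooth fixed curve $C$ contributes $\Eu(C)$ which for the rational/elliptic possibilities cannot reconcile with $\Eu(\XC^g)=2$ while leaving $g$ acting nontrivially and minimally. I would therefore conclude $\XC^g$ is a finite set of exactly two points.

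It then remains to show these two points avoid all $(-1)$-curves. The key observation is that $g^*$ acts on the set of ten $(-1)$-curves as a permutation induced by a $5$-cycle in $\mathfrak{S}_5$; such a permutation has no fixed point on the ten lines (the action on the Petersen graph by an order-$5$ rotation is fixed-point-free on vertices). Consequently no $(-1)$-curve is $g$-invariant. If a fixed point $q$ lay on some $(-1)$-curve $L$, then $g(L)$ would be a $(-1)$-curve also passing through $q$; iterating, the whole $g$-orbit $\{L,g(L),\ldots,g^4(L)\}$ of five distinct lines would pass through $q$. But on a quintic del Pezzo surface at most three $(-1)$-curves pass through a single point (the configuration is determined by the Petersen graph, whose maximal cliques have size $\leq 3$), giving a contradiction. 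Hence neither fixed point lies on a $(-1)$-curve.

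\textbf{Main obstacle.} The step I expect to require the most care is the rigorous exclusion of a positive-dimensional fixed locus: the clean statement ``$\Eu=2$ counts points'' presupposes that $\XC^g$ is finite, and while the eigenvalue computation on $\Pic(\XC)$ strongly constrains any fixed curve (via $\tr_{H^2}$ and ampleness of $-K_X$), turning this into an airtight argument may require invoking the smoothness of the fixed locus of a finite-order automorphism on a smooth surface together with an intersection-theoretic count against $-K_X$. Once finiteness is secured, both the count of fixed points and the avoidance of $(-1)$-curves follow cleanly from the combinatorics of the $5$-cycle acting on the Petersen graph.
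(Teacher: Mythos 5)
Your route coincides with the paper's in all essentials: the trace $\tr(g^*)=-1$ forced by the eigenvalues of an order-$5$ element of $\WW(\A_4)\cong\mathfrak{S}_5$, the Lefschetz count $\Eu(\XC^g)=2$, and the Petersen-graph combinatorics (fixed-point-free action on the ten lines, clique bound) to keep fixed points off the $(-1)$-curves. However, the step you yourself flag --- discreteness of $\XC^g$ --- is a genuine gap as written, and your ``more direct'' fallback is not just incomplete but wrong: the fixed locus of a finite-order automorphism of a smooth surface is smooth, each pointwise-fixed curve has $g^*$-invariant class, and since the characteristic polynomial of $g^*$ on $\mathbb{E}_4$ is $\Phi_5(t)=t^4+t^3+t^2+t+1$, such a class is proportional to $K_{\XC}$; the case $C\sim -K_{\XC}$ is an elliptic curve with $\Eu(C)=0$, which is \emph{invisible} to the Lefschetz count --- an anticanonical fixed elliptic curve together with two isolated fixed points also gives total Euler characteristic $2$, so the claim that the ``rational/elliptic possibilities cannot reconcile with $\Eu(\XC^g)=2$'' fails precisely in the elliptic case. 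The paper closes this by reversing your order of argument: it first proves (as in your third paragraph, which is independent of discreteness) that no fixed point lies on any $(-1)$-curve, and then gets discreteness for free, since a curve in the fixed locus would meet the ample divisor supported on the ten lines (their sum is $\sim -2K_{\XC}$), producing a fixed point on a line. Promoting your last paragraph to come first and deriving discreteness from it repairs the proof.

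Two smaller points. Your appeal to Lemma~\ref{cyclicminimality} conflates minimality over $\RR$, i.e.\ $\rk\Pic(X)^G=1$, with minimality of the complex surface, i.e.\ $\rk\Pic(\XC)^g=1$; these differ when the Galois group acts nontrivially on $\Pic(\XC)$. The paper bridges this by first showing that all ten lines are real (the number of real lines is $2$, $4$ or $10$, and in the first two cases the order-$5$ element would fix a real line, contradicting Lemma~\ref{mainlemma}); in your setup the issue happens to be harmless for a different reason, namely that an order-$5$ element acting faithfully on the rank-$4$ lattice $\mathbb{E}_4$ must have characteristic polynomial $\Phi_5$, so $1\notin\Sp(g^*)$ unconditionally --- but as written the inference is not justified. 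Finally, your clique bound ``$\leq 3$'' is correct but not sharp: the Petersen graph is triangle-free, so at most two $(-1)$-curves on a quintic del Pezzo surface are concurrent; either bound defeats five orbit lines through a common fixed point.
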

\begin{proof}
	The $\langle g\rangle_5$-minimality assumption implies that all the $(-1)$-curves on $X$ are real, since the total number of real $(-1)$-curves can be equal to 2, 4 or 10 \cite[Corollary 5.4]{kol}. Now look at the Petersen graph Fig. \ref{fig:ClassicPetersen}. One can check that the five $(-1)$-curves from each $\langle g\rangle_5$-orbit form a pentagon (there are no $g$-invariant $(-1)$-curves). Without loss of generality, we may assume that these orbits are $\{e_1,d_{14},d_{23},e_2,d_{12}\}$ and $\{d_{13},e_4,e_3,d_{24},d_{34}\}$. Obviously, $g$ permutes $(-1)$-curves in the following way: $e_1\mapsto d_{14}= e_0-e_1-e_4$, $e_2\mapsto d_{12}= e_0-e_1-e_2$, $e_3\mapsto d_{24}= e_0-e_2-e_4$, $e_4\mapsto e_3$. In particular, if a fixed point exists, then it cannot lie on any $(-1)$-curve. If $e_0\mapsto w$, then $K_{\XC}= -3e_0+e_1+e_2+e_3+e_4= -3w+(e_0-e_1-e_4)+(e_0-e_1-e_2)+(e_0-e_2-e_4)+e_3$ (since the canonical class is $g$-invariant), so $e_0\mapsto w=2e_0-e_1-e_2-e_4$. Therefore, $\tr_{\Pic(\XC)}(g^*)=0$. As in the previous section, it is easy to see that the fixed point locus is discrete. It remains to apply the Lefschetz fixed point formula:
	\[
	\Eu(\XC^g)=\tr_{H^0(X,\CC)}(g^*)+\tr_{H^4(X,\CC)}(g^*)=2
	\]
\end{proof}

\begin{lem}\label{lem:FromDP5ToDP3}
	Let $X$ be a del Pezzo surface of degree 5 and $\pi: Y\to X$ is the blow-up of two points $q_1,\ q_2\in X$ lying neither on any exceptional curve, nor on any conic (by a conic we mean a conic in the anticanonical embedding of $X$). Then $Y$ is a del Pezzo surface of degree 3.
\end{lem}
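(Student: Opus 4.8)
The plan is to factor the morphism $\pi$ into two successive blow-ups of a single point and to apply Lemma \ref{lem:blow-up-down-dP}(2) at each stage. Since being del Pezzo is a geometric property (ampleness of $-K$ may be tested after base change to $\CC$), I would pass to $\XC$ and regard $q_1,q_2$ as distinct complex points, with the two conditions of the lemma understood on $\XC$. First I would blow up $q_1$. Here $K_X^2=5>1$, and $q_1$ lies on no $(-1)$-curve by hypothesis, while condition (iii) of Lemma \ref{lem:blow-up-down-dP}(2) is vacuous because $K_X^2\ne 2$. Hence $Y_1:=\Bl_{q_1}X$ is a del Pezzo surface with $K_{Y_1}^2=4$.

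The second step is to blow up the point $\widetilde{q}_2\in Y_1$ lying over $q_2$, which is a genuine point since $q_1\ne q_2$. Again $K_{Y_1}^2=4>1$ and condition (iii) is vacuous, so the only thing to verify is that $\widetilde{q}_2$ does not lie on any $(-1)$-curve of $Y_1$; granting this, Lemma \ref{lem:blow-up-down-dP}(2) yields that $Y=\Bl_{\widetilde{q}_2}Y_1$ is a del Pezzo surface of degree $3$. To control the $(-1)$-curves of $Y_1$ I would use the exact count from Table \ref{table: delPezzoNumberOfLines}: a degree-$4$ del Pezzo carries exactly $16$ of them. One exhibits $16$ distinct such classes explicitly, namely the exceptional divisor $E_1$ over $q_1$; the strict transforms of the $10$ exceptional curves of $X$ (none of which passes through $q_1$, since $q_1$ avoids all exceptional curves); and the strict transforms of the five conics through $q_1$, one drawn from each of the five conic-bundle pencils on $X$ (each such conic is irreducible because $q_1$ lies on no $d_{ij}$), whose self-intersection drops from $0$ to $-1$ after blow-up. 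These are $1+10+5=16$ classes, so they exhaust the $(-1)$-curves of $Y_1$.

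It then remains to check that $\widetilde{q}_2$ avoids each of these sixteen curves. It is not on $E_1$, because $q_2\ne q_1$ is an honest point of $X$; it is not on a strict transform of an exceptional curve of $X$, because $q_2$ lies on no exceptional curve; and it is not on a strict transform of a conic through $q_1$, because $q_2$ lies on no conic at all, in particular on none of those five. This last point is the crux of the argument, and it is precisely why the hypothesis forbids $q_2$ from lying on a conic: the passage from degree $5$ to degree $4$ manufactures exactly the five new $(-1)$-curves coming from conics through $q_1$, and a blow-up centred on any of them would destroy the ampleness of $-K$ (compare Lemma \ref{mainlemma}). Equivalently, in the model $\XC=\Bl_{p_1,\dots,p_4}\PC$ the two hypotheses amount to saying that the six points $p_1,\dots,p_4,q_1,q_2$ are in general position, which is the classical criterion for $Y$ to be a degree-$3$ del Pezzo. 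With $\widetilde{q}_2$ off every $(-1)$-curve, the second application of Lemma \ref{lem:blow-up-down-dP}(2) completes the proof.
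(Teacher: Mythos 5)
Your argument is correct, but it takes a genuinely different route from the paper's. The paper proves ampleness of $-K_Y$ in a single step via the Nakai--Moishezon criterion: assuming an irreducible curve $C$ with $-K_Y\cdot C\leq 0$, it pushes a two-dimensional subsystem of $|-K_Y|$ down to $X$ and, using that the homogeneous ideal of the anticanonically embedded quintic in $\PP^5$ is generated by quadrics, traps $q_1,q_2$ together with $\pi_*C$ inside a plane section of degree at most $2$, i.e.\ produces a line or conic through the pair, contradicting the hypothesis. You instead factor $\pi$ through the intermediate degree-$4$ surface and apply Lemma \ref{lem:blow-up-down-dP}(2) twice; the only substantive work is your enumeration of the sixteen $(-1)$-curves on $Y_1=\Bl_{q_1}\XC$ (the divisor $E_1$, the ten strict transforms, and the five conics through $q_1$, one from each pencil), which matches the count of Table \ref{table: delPezzoNumberOfLines} and is sound --- including the point that each such conic is irreducible because the reducible members of every pencil are unions of two $(-1)$-curves, all of which $q_1$ avoids. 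Your route is more self-contained given the tools already in the paper and makes transparent exactly where the conic hypothesis is consumed (the conics through $q_1$ are precisely the five new $(-1)$-curves created at degree $4$), while the paper's proof is shorter and needs no enumeration. One caveat on wording: your justification ``because $q_2$ lies on no conic at all'' cannot be meant literally, since through every point of $X$ there passes a conic from each of the five pencils; the lemma's hypothesis must be read, as the paper itself uses it in the proof of Proposition \ref{prop: dp5 order 5}, as asserting that no conic passes through \emph{both} $q_1$ and $q_2$. Your decisive step only requires this pair condition, so the argument stands as written, and your reformulation of the hypotheses as general position of the six points $p_1,\dots,p_4$ and the images of $q_1,q_2$ is the correct classical criterion under that reading.
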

\begin{proof}
	To show that $-K_Y$ is ample, we use the Nakai–Moishezon criterion. First, note that $(-K_Y)^2=K_X^2-2=3$. By Riemann-Roch,
	\[
	\dim|-K_Y|\geq\frac{1}{2}((-K_Y)^2-(-K_Y\cdot K_Y))=K_Y^2=3,
	\]
	so $|-K_Y|\ne\varnothing$. Assume that there is an irreducible curve $C\subset Y$ with $-K_Y\cdot C\leq 0$. Clearly, there exists a linear system $\LL\subset |-K_Y|$ of dimension $\geq 2$ such that $C\subseteq F$, where $F$ is the fixed part of $\LL$. Let $\MM=\LL-F$ be the mobile part. Note that $C\nsubseteq\Exc(\pi)$ (since every exceptional curve has positive intersection with $-K_Y$), so $C'=\pi_*C$ is a curve. Put $\LL'=\pi_*\LL$, $F'=\pi_*F$, $\MM'=\pi_*\MM$. Then $\LL'=F'+\MM'\subset |-K_X|$ and $C'\subseteq F'\subset\Bs(\LL')$. Obviously, $p_1,\ p_2\in\Bs(\LL')\backslash C'$. Thus $\Bs(\LL')\subset X\cap\PP^2$ (we identify $X$ with its anticanonical model in $\PP^5$). But the homogeneous ideal of $X$ is generated by five linearly independent quadrics \cite[8.5.2]{cag}, so $p_1,\ p_2$ lie on the curve of degree $\leq 2$, a contradiction.
	
\end{proof}

Now we are ready to prove the main result of this section.

\begin{prop}\label{prop: dp5 order 5}
	Let $X$ be a real del Pezzo surface of degree 5 and $G\subset\Aut(X)$ is a subgroup of order 5 acting minimally on $X$. Then $G$ is linearizable.
\end{prop}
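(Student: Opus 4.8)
The plan is to show that a minimal $\ZZ/5\ZZ$-action on a real del Pezzo surface $X$ of degree $5$ can be conjugated, via an explicit $G$-equivariant blow-up/blow-down sequence, into $\PGL_3(\RR)$. By Lemma \ref{lem:DP5FixedPoints}, the generator $g$ has exactly two fixed points $q_1,q_2\in\XC$ not lying on any $(-1)$-curve. The first observation I would establish is that these two points are complex conjugate, i.e. $q_2=\sigma(q_1)$. Indeed, if a fixed point were real, then blowing it up and running an argument as in the degree-$6$ case would immediately linearize $G$; so the substantive case is when the fixed locus is a single pair of conjugate points, and $\langle g\rangle$ commutes with $\sigma$ on the set $\{q_1,q_2\}$. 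Since the order of $g$ is odd, $g$ cannot swap $q_1$ and $\sigma(q_1)$, which forces $\sigma$ to fix each $q_i$ individually only if they are real; hence in the non-real case $\{q_1,q_2\}$ is a single $\Gamma$-orbit, a pair of conjugate points defined over $\RR$.

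Next I would blow up the $\Gamma$-invariant, $g$-invariant pair $\{q_1,q_2\}$. To apply Lemma \ref{lem:FromDP5ToDP3} I must check that neither point lies on a $(-1)$-curve (already guaranteed by Lemma \ref{lem:DP5FixedPoints}) nor on any conic in the anticanonical embedding. The conic condition is exactly where I expect to spend real effort: I would argue that if $q_1$ (hence $q_2=\sigma(q_1)$) lay on a conic, the $\langle g\rangle_5$-orbit of that conic would produce a configuration of anticanonical conics whose classes in $\Pic(\XC)$ are permuted by $g^*$, and comparing with the computed action of $g^*$ on $\Pic(\XC)$ from Lemma \ref{lem:DP5FixedPoints} (where $\tr_{\Pic(\XC)}(g^*)=0$) should rule this out, or else force the point onto an exceptional curve. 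Once the hypotheses of Lemma \ref{lem:FromDP5ToDP3} are verified, the blow-up $Y$ is a del Pezzo surface of degree $3$ carrying a $(\Gamma\times\langle g\rangle)$-action, with $g$ now acting with the two new exceptional curves over $q_1,q_2$ as an additional pair of conjugate invariant curves.

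On the cubic surface $Y$ I would then exploit the extra room: a del Pezzo surface of degree $3$ has $27$ lines, and the order-$5$ automorphism $g$ permutes them. Because $5\nmid 27$ and $5\nmid |\mathfrak{S}_3|$, the element $g^*$ on $\Pic(Y)$ has enough trivial eigenvalues that $\rk\Pic(Y)^{\Gamma\times G}>1$; concretely, the pair of conjugate exceptional curves I just created already shows $Y$ is not $(\Gamma\times G)$-minimal by Lemma \ref{mainlemma}. Equivalently, using the trace formula (\ref{characterformula}) together with the eigenvalue count of Lemma \ref{cyclicminimality}, the value $1\in\Sp(g^*)$ on $Y$, so there is a $G$-equivariant contraction of an invariant set of disjoint $(-1)$-curves bringing us down toward a surface of degree $\geq 8$. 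I would choose this contraction so as to land on $\PR$ or $Q_{3,1}$ with $G$ acting; in either case Proposition \ref{prop: dp9 main prop} or Proposition \ref{prop: dP8_main_prop} finishes the linearization.

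The main obstacle, as flagged above, is the verification that the conjugate fixed pair $\{q_1,q_2\}$ avoids all anticanonical conics, since this is precisely the hypothesis of Lemma \ref{lem:FromDP5ToDP3} and it is not automatic from the $(-1)$-curve avoidance. I expect to handle it by a representation-theoretic or orbit-counting argument on $\Pic(\XC)$: the five conic classes through a general point form a $g^*$-stable set, and an invariant conic through $q_1$ would have to be $g$-fixed or lie in a short $g$-orbit incompatible with the rotation action of $g$ on the Petersen graph described in Lemma \ref{lem:DP5FixedPoints}. The remaining steps — the two blow-downs and the final appeal to the degree-$8$ or degree-$9$ results — are then comparatively routine, and the only bookkeeping is to keep every contraction $G$- and $\Gamma$-equivariant so that the resulting conjugation takes place genuinely inside $\CRR$.
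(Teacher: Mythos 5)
Your outline follows the same skeleton as the paper's proof (the fixed pair from Lemma \ref{lem:DP5FixedPoints}, blow-up to a cubic via Lemma \ref{lem:FromDP5ToDP3}, descent to $Q_{3,1}$ or $\PP^2_\RR$), but the step you yourself flag as the crux --- that the fixed pair avoids all conics --- is never actually proved, and your sketch aims at a slightly wrong target: what must be excluded is a conic through \emph{both} $q_1$ and $q_2$ (a conic through one point alone does not obstruct ampleness of $-K_Y$; this is what the proof of Lemma \ref{lem:FromDP5ToDP3} really uses), not an ``invariant conic through $q_1$''. The paper settles this in two moves: if two conics passed through the pair, blowing up $q_1$ would create a triangle of lines on a del Pezzo surface of degree 4, which is impossible since that surface is a smooth intersection of two quadrics; hence such a conic $Q$ is unique, therefore $\Gamma\times G$-invariant, so $Q\sim -aK_{\XC}$ by minimality, and intersecting with $-K_{\XC}$ gives $5a=2$, absurd. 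Your orbit-counting idea can in fact be made to work --- $g^*$ has no fixed vectors on $K_{\XC}^{\bot}$, so the five conic pencils are cyclically permuted, whence $Q$ and $g(Q)$ lie in distinct pencils yet share the two distinct points $q_1,q_2$, contradicting the fact that classes of distinct pencils have intersection number $1$ --- but ``should rule this out'' is not a proof, and as written this is a genuine gap.

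Two further steps would fail as written. On the cubic $Y$ you deduce non-minimality from the invariant pair $\ell_1+\ell_2$ and then propose to ``choose'' an equivariant contraction landing on $\PR$ or $Q_{3,1}$; but the only contraction that non-minimality hands you collapses $\ell_1+\ell_2$ and returns you to $X$, which is circular, and an equivariant MMP run from $Y$ could a priori also terminate in a conic bundle, where linearizability is not immediate. The paper instead uses concrete geometry: the classical map $\ell_1\times\ell_2\dasharrow Y$ sending $(p_1,p_2)$ to the third intersection point of $Y$ with $\overline{p_1p_2}$ (equivalently, the blow-down of the five transversals of the skew conjugate pair), which identifies $(Y,G)$ birationally with $Q_{3,1}$, after which Proposition \ref{prop: dP8_main_prop} applies. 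Finally, the case of two \emph{real} fixed points is not ``immediate as in the degree-6 case'': blowing up a real point of $X$ gives a degree-4 surface, and you supply no analogue of the degree-6 contraction of an orbit of three disjoint lines; the paper treats this case by an entirely different device, namely the projection from $\PP(T_{q_1}X)$, which is birational by the result of Beauville--Blanc and conjugates $G$ directly into $\PGL_3(\RR)$. Without these three repairs your proposal is an outline of the correct strategy rather than a proof.
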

\begin{proof}
	According to Lemma \ref{lem:DP5FixedPoints} $G$ has two fixed points on $\XC$ not lying on the $(-1)$-curves. Denote these points by $q_1$ and $q_2$, and let $Y$ be the blown up surface $\Bl_{q_1,q_2}(X)$. We claim that $Y$ is a del Pezzo surface of degree 3. According to Lemma \ref{lem:FromDP5ToDP3}, we have to show that these points do not lie on any conic.
	
	Suppose that $q_1,\ q_2\in Q$, where $Q$ is a conic. Note that $Q$ is unique. Indeed, assume that $q_1,\ q_2\in Q\cap Q'$, where $Q'$ is another conic. Blowing up $X$ at $q_1$, we get a del Pezzo surface $X'$ of degree 4 with 3 lines forming a triangle (possibly degenerated). On the other hand, it is well-known that there cannot be such triangles on $X'$ (a del Pezzo surface of degree 4 is a complete intersection of two quadrics in $\PP^4$ \cite[Theorem 8.6.2]{cag}) . Finally, $Q$ is obviously $\Gamma\times G$-invariant, so $Q\sim -aK_{\XC}$, $a\in\ZZ$. Multiplying by $-K_{\XC}$, we get $5a=-K_{\XC}\cdot Q=2$, which is impossible.
	
	If $q_1=\sigma(q_2)$, then we have a smooth real cubic surface $Y=\Bl_{q_1,q_2}(X)$ with two skew complex conjugate $G$-invariant lines, say $\ell_1$ and $\ell_2$. It is classically known that such a surface is $G$-birational to a del Pezzo surface $Q_{3,1}$ via birational map
	\begin{equation}\label{eq: bir trivial cubic}
	\varphi: \ell_1\times\ell_2\dashrightarrow Y,\ \ (p_1,p_2)\mapsto\text{third intersection point}\ Y\cap\overline{p_1p_2}.
	\end{equation}
	Finally, the obtained action of $G$ on $Q_{3,1}$ is linearizable by Proposition \ref{prop: dP8_main_prop}. Now assume that $q_1,\ q_2\in X(\RR)$ and let $X\subset\PP(V)$, where $V$ is a 6-dimensional vector space. Then we have a commutative diagram
	\[\xymatrix@C+1pc{
		X\ar[r]^{g}\ar@{-->}[d]_{\pi}  & X\ar@{-->}[d]^{\pi}\\
		\PP_\RR^2\ar[r]^{\widetilde{g}} & \PP_\RR^2  
	}\]
	where $g$ is a generator of $G$, $\pi$ is the projection from $\PP(T_{q_1}X)$ and $\widetilde{g}$ is an automorphism of $\PP_\RR^2\cong\PP(V/T_{q_1})$ induced by $g$. It is not hard to show that $\pi$ is birational, see \cite{bovblanc}. So, $G$ is linearizable.
\end{proof}
\begin{ex}[see \cite{df} or \cite{bovblanc}]
	Let $X$ be a surface obtained from $\PR$ by blowing up four real points $p_1=[1:0:0]$, $p_2=[0:1:0]$, $p_3=[0:0:1]$, $p_4=[1:1:1]$. Consider the transformation $g\in\Aut(X)$ of order 5 defined as the lift to $X$ of the birational map
	\[
	g_0:\PR\dashrightarrow\PR,\ \ g_0: [x:y:z]\mapsto[x(z-y):z(x-y):xz].
	\]
	This map has exactly two real fixed points $[\alpha:1:\alpha^2]$, where $\alpha=(1\pm\sqrt{5})/2$ (which give us two real fixed points on $X$). It can be checked that $g$ is conjugate by some real involution to the linear automorphism of $\PR$ (see \cite{bovblanc} for explicit formulas).
\end{ex}

\section{del Pezzo surfaces with $K_X^2\leq 4$}\label{sec: smalldp}

In the next four sections we use the known classification of conjugacy classes in the Weyl
groups. These classes are indexed by {\it Carter graphs}\footnote{We follow the terminology of \cite{di} and refer the reader to the original paper \cite{weyl} for details. All tables of conjugacy classes in sections \ref{sec:dp3}-\ref{sec:dp1} and Appendix \ref{appendix} are cribbed from \cite{weyl}.}. In particular, the Carter graph determines the characteristic polynomial of an element from a given class and its trace on $K_{\XC}^{\bot}$. 

\subsection{del Pezzo surfaces of degree $4$}\label{sec:dp4}

Again, consider representation in the Weyl group:
\[
\eta\times\rho: \Gamma\times G\rightarrow \WW(\D_5)\cong(\ZZ/2\ZZ)^4\rtimes\mathfrak{S}_5.
\]
\begin{prop}\label{prop: dP4 main prop}
Let $X$ be a real del Pezzo surface of degree 4 and $G\subset\Aut(X)$ be a subgroup of odd order. Then $\rk\Pic(X)^G>1$.
\end{prop}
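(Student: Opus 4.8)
The plan is to show that $G$ is forced to be cyclic of prime order $3$ or $5$, to produce a $G$-invariant $(-1)$-curve on $\XC$ from a simple orbit count, and then to descend it to a second $\Gamma\times G$-invariant class in $\Pic(\XC)$, which contradicts minimality via the rank formula.

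First I would reduce the group. Since $d=4\le 5$, the map $\rho\colon\Aut(\XC)\to\WW(\D_5)$ is injective, so $G$ is an odd-order subgroup of $\WW(\D_5)$, whose order is $2^7\cdot3\cdot5$. Every odd-order element maps to an odd-order element of the quotient $\mathfrak{S}_5$ (the kernel $(\ZZ/2\ZZ)^4$ being a $2$-group), and in $\mathfrak{S}_5$ such elements have order $1$, $3$ or $5$; thus $\WW(\D_5)$ has no element of order $15$, and since every group of order $15$ is cyclic, $G$ must be cyclic of order $1$, $3$ or $5$. The trivial case is immediate: a del Pezzo surface of degree $4$ does not occur in Comessatti's list (Theorem \ref{rminimal}), so it is never $\RR$-minimal and $\rk\Pic(X)>1$. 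Hence I may assume $G=\langle g\rangle_p$ with $p\in\{3,5\}$.

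Next I would locate an invariant line. The group $\langle g\rangle_p$ permutes the $16$ lines of $\XC$ in orbits of size $1$ or $p$, and since $16\equiv 1\pmod 3$ and $16\equiv 1\pmod 5$, the number of fixed lines is $\equiv 1\pmod p$; in particular there is a $G$-invariant $(-1)$-curve $L$. (For $p=5$ one checks that there is exactly one fixed line, so it is necessarily real and Lemma \ref{mainlemma} applies directly.) In general, using that $G$ is defined over $\RR$ so that $\sigma g=g\sigma$, the curve $\sigma(L)$ is again $G$-invariant, and therefore $D=L+\sigma(L)$ is a $\Gamma\times G$-invariant class. I would then check that $D$ is not a rational multiple of $K_{\XC}$: one has $L\cdot K_{\XC}=\sigma(L)\cdot K_{\XC}=-1$, so $D\cdot K_{\XC}=-2$, whence $D=\lambda K_{\XC}$ would force $\lambda=-1/2$ (as $K_{\XC}^2=4$) and so $D^2=1$; but $D^2=L^2+2\,L\cdot\sigma(L)+\sigma(L)^2$ is an even integer (namely $2\,L\cdot\sigma(L)-2$ if $L\ne\sigma(L)$, and $-4$ if $L=\sigma(L)$), a contradiction. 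Thus $K_{\XC}$ and $D$ are $\QQ$-independent invariant classes, $\rk\Pic(\XC)^{\Gamma\times G}\ge 2$, and by the Remark following Theorem \ref{rminimal} (applicable since $X(\RR)\ne\varnothing$) we conclude $\rk\Pic(X)^G=\rk\Pic(\XC)^{\Gamma\times G}>1$.

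The routine parts are the group-order bookkeeping and the orbit count; the only genuinely delicate step is the descent, where one must use the commutation $\sigma g=g\sigma$ to keep $\sigma(L)$ invariant and then exclude $D\in\QQ K_{\XC}$ by the numerical computation above. I expect this numerical verification, together with the check that the odd-order part of $\WW(\D_5)$ is governed by $\mathfrak{S}_5$, to be the main points requiring attention.
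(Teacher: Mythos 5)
Your proof is correct, and it takes a genuinely different route from the paper's. The paper argues spectrally: after reducing to $G\cong\ZZ/3\ZZ$ or $\ZZ/5\ZZ$, it invokes Wall's count of real lines on a degree-4 del Pezzo surface ($N\in\{0,4,8,16\}$) to force $N=0$ and hence $\eta(\Gamma)\ne{\rm id}$, rules out $\ZZ/5\ZZ$ by the absence of order-10 elements in $\WW(\D_5)$, and then, for $G=\langle g\rangle_3$, pins down $\Sp(g^*)$ via the unique order-3 class $A_2$, applies Lemma \ref{cyclicminimality} to $\langle g\circ\sigma\rangle$, and reaches a contradiction because $\WW(\D_5)$ has no order-6 element of trace $-4$. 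You instead produce a $G$-invariant line directly from the congruence $16\equiv 1\pmod p$ for $p=3,5$, and then descend: $D=L+\sigma(L)$ is $\Gamma\times G$-invariant, and your parity computation ($D\cdot K_{\XC}=-2$ forces $D=-\tfrac{1}{2}K_{\XC}$, hence $D^2=1$, while $D^2$ is even whether or not $L=\sigma(L)$) shows $D\notin\mathbb{Q}K_{\XC}$, so $\rk\Pic(\XC)^{\Gamma\times G}\geq 2$, which equals $\rk\Pic(X)^G$ since $X(\RR)\ne\varnothing$. This is more elementary: it bypasses Wall's table and the Carter conjugacy-class analysis entirely, and it correctly avoids Lemma \ref{mainlemma} in the case $L\cap\sigma(L)\ne\varnothing$, which that lemma does not cover. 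The trade-off is scope: your counting trick is special to degree 4 — for the 27 lines of a cubic or the 10 lines of a quintic the congruence fails ($27\equiv 0\pmod 3$, $10\equiv 0\pmod 5$), consistent with the minimal actions that genuinely occur there — whereas the paper's Weyl-group method is the one that extends to degrees $3$, $2$ and $1$.

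Two small blemishes, neither fatal. First, your parenthetical claim that a degree-4 del Pezzo surface is never $\RR$-minimal is inaccurate: case (3) of Theorem \ref{rminimal} allows minimal conic bundles with $c=4$, i.e.\ $K_X^2=4$, and some of these are del Pezzo; but such surfaces have $\rho(X)=2$, so the conclusion you need in the trivial-group case holds anyway. Second, the assertion that for $p=5$ there is exactly one fixed line is left unverified, but it is redundant, since your general descent argument already handles $p=5$.
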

\begin{proof}
Assume that $\rk\Pic(X)^G=1$. Since the order of $G$ is odd, either $G\cong\ZZ/3\ZZ$ or $G\cong\ZZ/5\ZZ$. It is well known that the number $N$ of real $(-1)$-curves on a real del Pezzo surface of degree 4 can be equal to $0,\ 4,\ 8$ or 16 \cite[Table 2]{Wall}. However, under our assumptions on $X$, we have $N=0$ (otherwise there exists $G$-invariant $(-1)$-curve, contradicting Lemma \ref{mainlemma}). In particular, $\eta(\Gamma)\ne {\rm id}$. On the other hand, $\sigma^*\ne {\rm id}$ implies that $G\ncong\ZZ/5\ZZ$, as there are no elements of order 10 in $\WW(\D_5)$ (see \cite[Section 6.4, Table 3]{di}).  

It remains to consider the case $G=\langle g\rangle_3\cong\ZZ/3\ZZ$. The only conjugacy class of elements of order 3 in $\WW(\D_5)$ is the class of type $A_2$ \cite[Section 6.4, Table 3]{di}, so $\Sp(g^*)=\{1,1,1,\omega_3,\conj{\omega}_3\}$ (here and later $\omega_n$ denotes a primitive $n$-th root of unity). As $g^*$ and $\sigma^*$ commute, they are simultaneously diagonalizable and $\Sp(g\circ\sigma)^*=\{\pm 1,\pm 1,\pm 1,\pm\omega_3,\pm\overline{\omega}_3\}$ (as for the sign combination, note that a priori we require only that the corresponding minimal polynomial has real coefficients). However, there are no involutions in $\WW(\D_5)$ which act as $-{\rm id}$ in $\mathbb{E}_5$. Moreover, since $\XC$ is $\langle g\circ\sigma\rangle$-minimal, $1\notin\Sp(g\circ\sigma)^*$ by Lemma \ref{cyclicminimality}. Thus $\Sp(g\circ\sigma)^*=\{- 1,- 1,- 1,\omega_3,\conj{\omega}_3\}$, and $\tr(g\circ\sigma)^*=-4$. However, Table 3 from the loc. cit. shows that there are no such elements of order 6 in $\WW(\D_5)$.
\end{proof}

\subsection{del Pezzo surfaces of degree $3$}\label{sec:dp3} 

Throughout this subsection $X$ denotes a real del Pezzo surface of degree 3. Recall that $X$ is a cubic surface in $\PP_\RR^3$. Since the linear system $|-K_X|=|\mathcal{O}_X(1)|$ is $G$-invariant, any automorphism of $X$ is a restriction of a projective automorphism, so we may identify automorphisms of $X$ with elements of $\PGL_4(\RR)$. 

For real del Pezzo surfaces of degree 3 one can prove the followning useful lemma (note that, unlike Lemma \ref{mainlemma}, it deals with $(-1)$-curves on a {\it complex} surface):
\begin{lem}\label{lem:dP3minimality}
	Let $X$ be a real del Pezzo surface of degree 3 and suppose that there is a $G$-invariant $(-1)$-curve on $\XC$. Then $X$ is not $G$-minimal.
\end{lem}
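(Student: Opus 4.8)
The plan is to exhibit a nontrivial birational $G$-morphism out of $X$, which by definition shows that $X$ is not $G$-minimal. Concretely, I would produce a $G$-invariant exceptional curve on $X$ --- either a real $(-1)$-curve (type (i) in the definition of exceptional curve) or a pair of disjoint complex conjugate $(-1)$-curves (type (ii)) --- and contract it. Since the contraction of a $G$-invariant exceptional curve is automatically a $G$-equivariant birational morphism to a smooth projective real surface, producing such a curve suffices.

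Let $L$ be the given $G$-invariant $(-1)$-curve on $\XC$. Because the $G$-action consists of $\RR$-automorphisms, it commutes with the antiholomorphic involution $\sigma$; hence $\sigma(L)$ is again a $G$-invariant $(-1)$-curve. I would then split into cases according to the mutual position of $L$ and $\sigma(L)$. If $L=\sigma(L)$, then $L$ is a real $(-1)$-curve and descends to a $G$-invariant exceptional curve of type (i), which we contract. If $L\ne\sigma(L)$ but $L\cap\sigma(L)=\varnothing$, then $L+\sigma(L)$ descends to a $G$-invariant exceptional curve of type (ii), and again we contract.

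The remaining --- and only delicate --- case is $L\ne\sigma(L)$ with $L\cap\sigma(L)\ne\varnothing$, where the two conjugate lines meet in a single (necessarily real and $G$-fixed) point and cannot be contracted directly. Here I would exploit the cubic surface structure: identifying $\Aut(X)$ with a subgroup of $\PGL_4(\RR)$ (legitimate since $|-K_X|=|\mathcal{O}_X(1)|$ is very ample and $G$-invariant), the plane $\Pi$ spanned by $L$ and $\sigma(L)$ cuts $X$ in a hyperplane section $D=\Pi\cap X\sim -K_X$ of degree $3$ containing both lines, so $D=L+\sigma(L)+M$ for a residual line $M$. Since $\Pi$ is the unique plane through the two conjugate lines, it is real, $\sigma(\Pi)=\Pi$, hence $D$ is $\sigma$-invariant; comparing $\sigma(D)=\sigma(L)+L+\sigma(M)$ with $D$ forces $\sigma(M)=M$, so $M$ is a real line (and $M\ne L,\sigma(L)$, for otherwise $L=\sigma(L)$). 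The same bookkeeping applied to any $g\in G$, which fixes $L$ and $\sigma(L)$ and preserves $D$, yields $g(M)=M$, so $M$ is $G$-invariant. Thus $M$ is a real $G$-invariant $(-1)$-curve, which we contract.

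I expect the main obstacle to be precisely this third case: the naive attempt to contract $L\cup\sigma(L)$ fails because the two curves meet, and the crux is recognizing that the residual line $M$ of the tritangent plane is simultaneously defined over $\RR$ and $G$-invariant. Once that is in place, everything reduces to the elementary observation that invariance of the hyperplane-section divisor $D$ propagates from $L$ and $\sigma(L)$ to its third component $M$.
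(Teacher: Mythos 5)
Your proposal is correct and takes essentially the same route as the paper's proof: both reduce immediately to the delicate case $L\ne\sigma(L)$ with $L\cap\sigma(L)\ne\varnothing$ (the other two cases giving a $G$-invariant exceptional curve on $X$, which the paper dispatches via Lemma~\ref{mainlemma} and you dispatch by direct contraction), and both then conclude by observing that the plane $\Pi$ spanned by $L$ and $\sigma(L)$ is $G$-invariant and cuts $\XC$ in $L$, $\sigma(L)$ and a residual real $G$-invariant line $M$, contradicting $G$-minimality. Your additional bookkeeping (checking $\sigma(M)=M$, $g(M)=M$, and $M\ne L,\sigma(L)$) merely makes explicit what the paper declares obvious.
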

\begin{proof}
	Assume that the contrary holds and $L\subset\XC$ is such a curve. By Lemma \ref{mainlemma}, it suffices to consider the case $L\ne\sigma(L)$. Note that $L\cap\sigma(L)\ne\varnothing$ (otherwise we have a $G$-invariant exceptional curve $L+\sigma(L)$ on $X$). Denote by $\Pi$ the $G$-invariant plane in $\PP_\CC^3$ spanned by $L$ and $\sigma(L)$. Then $\Pi\cap\XC=\{L,\sigma(L), M\}$ where $M$ is a real line. Obviously, $M$ must be $G$-invariant which contradicts the $G$-minimality assumption.
\end{proof}

According to Proposition \ref{PGLclassification}, $G$ can be written as a direct product of at most two cyclic groups. On the other hand, there is an injective homomorphism 
\[
\rho: G\rightarrow \WW(\E_6),
\]
hence $|G|=3^k5^l$, $k\leq 4,\ l\leq 1$. If $k=0$, then there exists a $G$-invariant $(-1)$-curve on $\XC$ (as the total number of $(-1)$-curves is 27). Thus $X$ is not $G$-minimal by Lemma \ref{lem:dP3minimality}. Note that there are no elements of order 15 (hence $l=0$), 27 and 81 in $\WW(\E_6)$. We see that $G$ is isomorphic to one of the following groups: 
\[
\ZZ/3\ZZ,\ (\ZZ/3\ZZ)^2,\ \ZZ/9\ZZ,\ (\ZZ/9\ZZ)^2,\ \ZZ/3\ZZ\times\ZZ/9\ZZ.
\]
Denote by $\diag[\alpha:\beta:\gamma:\delta]$ the projective automorphism
\[
[x:y:z:w]\mapsto[\alpha x:\beta y:\gamma z:\delta w],\ \ \ \alpha,\beta,\gamma,\delta\in\CC^*.
\]
Let $g\in{\rm PGL}_4(\RR)$ be an element of order 3. Denote by $\Fix(g,Y)$ the fixed locus of $g$, viewed as an automorphism of $Y$, where $Y$ is $\mathbb{P}_\CC^3$ or $\XC$. Obviously, $\Fix(g,\XC)=\Fix(g,\PP_\CC^3)\cap\XC$. 

\begin{prop}
Let $X$ be a real $G$-minimal del Pezzo surface of degree 3. Then $G$ is not isomorphic to any of the following groups: $(\ZZ/3\ZZ)^2,\ \ZZ/9\ZZ,\ (\ZZ/9\ZZ)^2,\ \ZZ/3\ZZ\times\ZZ/9\ZZ$.
\end{prop}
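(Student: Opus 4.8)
The plan is to rule out each of the four groups by working inside the Weyl group $\WW(\E_6)$ and combining the constraints coming from (i) the Galois action $\sigma^*$, (ii) the $G$-minimality assumption via Lemma \ref{cyclicminimality} and the trace formula \eqref{characterformula}, and (iii) the explicit fixed-point geometry of order-$3$ elements on a cubic surface in $\PP_\CC^3$. The first observation I would record is that since $G\subset\PGL_4(\RR)$ and every element has odd order, each $g\in G$ of order $3$ (or $9$) is, up to conjugacy, a diagonal automorphism $\diag[\alpha:\beta:\gamma:\delta]$ with roots of unity as entries; its fixed locus in $\PP_\CC^3$ is a union of linear subspaces, and $\Fix(g,\XC)=\Fix(g,\PP_\CC^3)\cap\XC$. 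This lets me translate each abstract group into an explicit linear action and read off traces and fixed loci.

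First I would dispose of the groups containing an element of order $9$, namely $\ZZ/9\ZZ$, $(\ZZ/9\ZZ)^2$ and $\ZZ/3\ZZ\times\ZZ/9\ZZ$. For an element $g$ of order $9$ in $\PGL_4(\RR)$, the eigenvalues of a lift to $\GL_4(\CC)$ must be stable (up to scalar) under complex conjugation, which forces a very restricted multiset of ninth roots of unity; I would compute the induced $g^*\in\WW(\E_6)$, identify its Carter type (the only order-$9$ class in $\WW(\E_6)$ is of type $\E_6(a_2)$ or the like), and read off $\tr(g^*)$. Using Lemma \ref{cyclicminimality}, minimality of $\langle g\rangle$ requires $1\notin\Sp(g^*)$; but the diagonal shape of $g$ on $\PP^3_\CC$ always leaves a fixed point on $\XC$ lying on the surface in a way that produces an invariant $(-1)$-curve, or else forces $1\in\Sp(g^*)$, contradicting minimality via Lemma \ref{lem:dP3minimality}. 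The genuinely mechanical part is matching the linear eigenvalue data to the Weyl-group conjugacy class; once that is done, the $9$-cases collapse quickly, and the products $(\ZZ/9\ZZ)^2$ and $\ZZ/3\ZZ\times\ZZ/9\ZZ$ are excluded a fortiori since they contain such an element.

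The remaining and most delicate case is $G\cong(\ZZ/3\ZZ)^2$. Here I would argue with the fixed loci of the order-$3$ elements directly on the cubic. A diagonal order-$3$ automorphism of $\PP^3_\CC$ typically has fixed locus equal to two disjoint lines, or a point together with a plane; intersecting with the cubic $\XC$ and using that $-K_\XC$ is very ample, each such $g$ has a nonempty fixed curve or a controlled finite fixed set. The key tension is that two commuting order-$3$ elements must share compatible eigenspace decompositions of $\CC^4$; I would show that for $(\ZZ/3\ZZ)^2$ acting linearly and diagonally, the common eigenspaces force the existence of a coordinate line or plane section meeting $\XC$ in a $G$-invariant line (a $(-1)$-curve on $\XC$, possibly after pairing with its conjugate), and then invoke Lemma \ref{lem:dP3minimality} to contradict $G$-minimality. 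The main obstacle will be making the fixed-point and invariant-line analysis for the abelian case airtight: unlike the cyclic situation, I cannot simply apply Lemma \ref{cyclicminimality}, so I must instead trace through the $27$ lines, determine the $\WW(\E_6)$-orbit structure under a $(\ZZ/3\ZZ)^2$-subgroup, and verify that such a subgroup always stabilizes at least one line or conjugate pair. I expect the cleanest route is to classify the possible images of $(\ZZ/3\ZZ)^2$ in $\WW(\E_6)$ up to conjugacy (using the tables of \cite{weyl}), check the trace sum $\sum_{g\in\Gamma\times G}\tr(g^*)$ against \eqref{characterformula}, and show it never vanishes simultaneously with the nonexistence of an invariant line.
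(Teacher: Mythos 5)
Your proposal has a genuine gap in each of its two halves, and in both cases the missing ingredient is the one the paper's proof is actually built on.

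For the order-$9$ groups: the unique conjugacy class of order-$9$ elements in $\WW(\E_6)$ is $E_6(a_1)$, with characteristic polynomial $t^6+t^3+1$ (your ``$E_6(a_2)$'' has order $6$; see Table \ref{table:weylE6}). In particular $1\notin\Sp(g^*)$, so Lemma \ref{cyclicminimality} says that \emph{every} order-$9$ automorphism of a complex cubic acts minimally on $\XC$ --- it certifies minimality rather than contradicting it, and no invariant $(-1)$-curve is forced either (such minimal order-$9$ actions really occur over $\CC$, e.g.\ on $x_0^2x_1+x_1^2x_2+x_2^2x_0+x_3^3=0$). So both horns of your dichotomy fail, and the contradiction must come from the realness of $g$, which your sketch records but never uses. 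The workable repair: cubing the roots of $t^6+t^3+1$ shows $(g^3)^*$ has eigenvalues $\omega_3,\conj{\omega}_3$ each of multiplicity $3$, i.e.\ class $A_2^3$ with $\tr((g^3)^*)=-3$; then $\Eu(\XC^{g^3})=0$ by \eqref{LefschetzFixedPointFormula}, forcing $\Fix(g^3,\XC)$ to be a smooth plane section, hence $g^3$ has an eigenvalue of multiplicity $3$ on $\CC^4$ --- incompatible with a real characteristic polynomial for an element of order $3$ in $\PGL_4(\RR)$. This eigenvalue-multiplicity argument is exactly what the paper runs inside its $N=27$ case; without it the order-$9$ cases do not ``collapse quickly''.

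For $G\cong(\ZZ/3\ZZ)^2$, your key claim --- that the common eigenspace decomposition forces a $G$-invariant line or invariant conjugate pair on $\XC$, to be fed into Lemma \ref{lem:dP3minimality} --- is false. Take the real form of the Fermat cubic $z_1^3+w_1^3+z_2^3+w_2^3=0$ with real structure $(z_1,w_1,z_2,w_2)\mapsto(\conj{w}_1,\conj{z}_1,\conj{w}_2,\conj{z}_2)$, on which $(\ZZ/3\ZZ)^2\subset\SO_2(\RR)\times\SO_2(\RR)$ acts by $(z_i,w_i)\mapsto(\omega_3^{a_i}z_i,\conj{\omega}_3^{a_i}w_i)$: the $G$-orbits on the $27$ lines have sizes $9,3,3,3,3,3,3$, so there is no invariant line, and since $|G|$ is odd there is no invariant conjugate pair either. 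What kills minimality there is instead a $G$-invariant triple of \emph{pairwise disjoint real} lines, contracted equivariantly --- a configuration Lemma \ref{lem:dP3minimality} cannot see. Any correct argument must therefore handle orbits of three lines, split into the disjoint case and the triangle/Eckardt case, and the latter is precisely where the paper's proof does its real work: it uses the classical count $N\in\{3,7,15,27\}$ of real lines and an elementary stabilizer argument (an odd-order automorphism of $\PRone$ with a real fixed point is trivial, and a pointwise-fixed plane would force a reflection) to show an orbit of three real lines has trivial stabilizer, whence $|G|=3$ when $N=3$ or $15$, leaving only $N=27$ for the trace computation. Your fallback --- enumerating images of $\Gamma\times G$ in $\WW(\E_6)$ and testing \eqref{characterformula} --- could in principle be made to work, but it is entirely unexecuted, and it would still require the realness constraint above, since minimal $(\ZZ/3\ZZ)^2$-subgroups of $\WW(\E_6)$ genuinely exist (they are realized on the complex Fermat cubic by subgroups containing class-$A_2^3$ elements, which is exactly what realness rules out).
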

\begin{proof}
It is well-known that a smooth cubic surface over $\RR$ has $N=3,\ 7,\ 15$ or $27$ real lines (see e.g. \cite[VI, 5.4]{Silhol}, although the result goes back to L. Schl\"{a}fli  and L. Cremona). Clearly, $N\ne 7$ under our assumptions on $X$ (otherwise there would be at least one $G$-invariant $(-1)$-cuve on $X$). Suppose that $G$ is isomorphic to one of the groups listed above. Let us consider the remaining cases for $N$.

{\bf Case $N=3$}. We may assume that there are no $G$-invariant lines on $X$. Thus we have a $G$-orbit consisting of 3 real lines, say $\ell_1,\ \ell_2,\ \ell_3$. Denote by $G_0$ the stabilizer subgroup of $\ell_1$. Obviously, $G_0$ is nontrivial and stabilizes the whole orbit (because $G$ is abelian). Since $X$ is $G$-minimal, the lines $\ell_1,\ \ell_2,\ \ell_3$ cannot be disjoint, so they either determine a triangle, or intersect at a single Eckardt point. Let us show that in both cases $g_0$ must be trivial.

Indeed, in both cases we have a projective automorphism $g_0$ which stabilizes a real line, say $\ell_1$, and fixes a real point $p=\ell_1\cap\ell_2$. Restricting $g_0$ to $\ell_1$, we get an automorphism $h=g_0|_{\ell_1}$ of $\PP^1_\RR$ with a real fixed point. Thus either $h$ has order 2 and we obtain a contradiction (the order of $h$ must divide the order of $g_0$), or $h$ fixes $\ell_1$ pointwise. In the latter case $h$ fixes $\ell_2$ pointwise too ($G$ is abelian and $\ell_1$, $\ell_2$ lie in the same orbit), so $g_0$ fixes pointwise the plane in $\PP_\RR^3$ spanned by $\ell_1$ and $\ell_2$, hence must be a reflection.

Therefore $G_0\cong\{\rm id\}$ and $G\cong\ZZ/3\ZZ$, a contradiction.

{\bf Case $N=15$}. Consider the action of $G$ on the set of real lines on $X$. It is easy to see that there must be a $G$-orbit of cardinality 3 (or a $G$-invariant line). As we saw in the previous case, this is impossible.

{\bf Case $N=27$}. Then the Galois group $\Gamma$ acts trivially on $\Pic(\XC)$ and $\XC$ is a $G$-minimal surface. Take $g\in G$. If the order of $g$ is 9, then $\tr(g^*)=0$ (there is a single conjugacy class in $\WW(\E_6)$, see Table \ref{table:weylE6}). If the order of $g$ is 3, then $\tr(g^*)\geq 0$. In fact, Table \ref{table:weylE6} shows that the only negative value of $\tr(g^*)$ is $-3$, so $\Eu(\Fix(g,\XC))=0$. Clearly, $\Fix(g,\XC)$ is an elliptic curve, and $\Fix(g,\PP_\CC^3)$ is a plane, so $g$ has an eigenvalue of multiplicity 3. Since the eigenvalues of $g$ are not all equal to $\pm 1$, its characteristic polynomial cannot belong to $\RR[t]$, a contradiction.

We see that $\sum_{g\in G}\tr(g^*)\ne 0$ (as $\tr({\rm id}^*)\ne 0$). So, $\XC$ is not $G$-minimal, a contradiction.

\end{proof}

\begin{prop}\label{prop: dP3 Z3-minim}
	Let $X$ be a real $\RR$-rational del Pezzo surface of degree 3, and $G\cong\ZZ/3\ZZ$. Then $X$ is not $G$-minimal.
\end{prop}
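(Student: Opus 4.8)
The plan is to argue by contradiction: I assume $X$ is $G$-minimal with $G=\langle g\rangle\cong\ZZ/3\ZZ$ and show that this forces $X(\RR)$ to be disconnected, which is incompatible with $\RR$-rationality. I first collect the elementary reductions. By Lemma~\ref{lem:dP3minimality} (together with Lemma~\ref{mainlemma}) minimality forbids any $g$-invariant line on $\XC$, so $g$ permutes the $N$ real lines of $X$ in orbits of length $3$; hence $3\mid N$, and since $N\in\{3,7,15,27\}$ by \cite[VI, 5.4]{Silhol} we must have $N\in\{3,15,27\}$, the case $N=7$ being excluded.

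Next I analyse the fixed locus. Regarding $g$ as an element of $\PGL_4(\RR)$ of order $3$, its eigenvalues on $\CC^4$ are, up to a common scalar, either $\{1,1,\omega_3,\conj{\omega}_3\}$ or $\{\omega_3,\omega_3,\conj{\omega}_3,\conj{\omega}_3\}$, so $\Fix(g,\PP_\CC^3)$ is a union of linear subspaces of dimension at most $1$. As a pointwise-fixed line on $\XC$ is again ruled out by minimality, $\Fix(g,\XC)$ is finite and, by Remark~\ref{rem: holom Lef}, nonempty; therefore $\Eu(\XC^g)=\#\Fix(g,\XC)\geq 1$, and by (\ref{LefschetzFixedPointFormula}) we get $\tr(g^*)=\Eu(\XC^g)-3\geq -2$. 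Consulting Table~\ref{table:weylE6}, the order-$3$ class of $g$ is thus $A_2$ or $A_2^2$, with $\tr(g^*)\in\{3,0\}$ and fixed sublattice $\mathbb{E}_6^{\,g}$ of rank $p\in\{4,2\}$; in particular the class $3A_2$ (trace $-3$, whose fixed locus would be a curve) is excluded.

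The decisive reformulation is algebraic. Since $X(\RR)\neq\varnothing$ we have $\rk\Pic(X)^G=\rk\Pic(\XC)^{\Gamma\times G}=1+\dim(\mathbb{E}_6\otimes\RR)^{\Gamma\times G}$, so $X$ is $G$-minimal precisely when $\sigma^*$ has no nonzero fixed vector on $\mathbb{E}_6^{\,g}$, i.e. (as $(\sigma^*)^2=\mathrm{id}$) when $\sigma^*=-\mathrm{id}$ on $\mathbb{E}_6^{\,g}$. I would then run the key parity argument on the complementary $g$-moving space $W$, where $\mathbb{E}_6\otimes\RR=\mathbb{E}_6^{\,g}\oplus W$ is both $g^*$- and $\sigma^*$-invariant. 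Complexifying, $W_\CC=W_{\omega_3}\oplus W_{\conj{\omega}_3}$ with $g^*$ acting by $\omega_3$ on $W_{\omega_3}$; since $\sigma^*$ is real and commutes with $g^*$, its complexification preserves $W_{\omega_3}$ and intertwines the $(+1)$-eigenspaces of $\sigma^*$ on $W_{\omega_3}$ and on $W_{\conj{\omega}_3}=\conj{W_{\omega_3}}$ via complex conjugation. These two eigenspaces therefore have equal dimension, so $a:=\dim(\mathbb{E}_6\otimes\RR)^{\sigma^*}=\dim W^{\sigma^*}$ (using $\sigma^*=-\mathrm{id}$ on $\mathbb{E}_6^{\,g}$) is \emph{even}.

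It remains to convert this into topology. By the topological Lefschetz formula for the antiholomorphic involution $\sigma$ (equivalently, by the classification of real cubic surfaces) one has $\Eu(X(\RR))=1-\tr_{\mathbb{E}_6}(\sigma^*)=7-2a$. With $a$ even and $0\leq a\leq\dim W\leq 4$, this forces $\Eu(X(\RR))\in\{7,3,-1\}$; the value $7$ is realised by no real cubic surface, while $\Eu(X(\RR))=-1$ corresponds to $N=7$, already excluded. Hence $\Eu(X(\RR))=3$, i.e. $X(\RR)\approx\RR\PP^2\sqcup\SSS^2$ is disconnected, contradicting the connectedness of the real locus of an $\RR$-rational surface. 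The genuinely delicate point — and the heart of the argument — is exactly this last dichotomy: among cubic surfaces with $N=3$ real lines there are both a connected ($X(\RR)\approx\RR\PP^2$, $\Eu=1$) and a disconnected ($\Eu=3$) type, both admitting order-$3$ automorphisms, and it is precisely the parity of $a$ imposed by minimality that eliminates the connected, hence $\RR$-rational, case while leaving the disconnected one (the minimal $\ZZ/3\ZZ$-action of Remark~\ref{rem: counterexample}).
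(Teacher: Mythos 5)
Your proposal is correct, but it takes a genuinely different route from the paper's proof. The paper works entirely inside the Weyl group: after discarding the class $A_2^3$ (via the non-real characteristic polynomial of a plane-fixing element of $\PGL_4(\RR)$), it uses Lemma~\ref{cyclicminimality} to force $\sigma^*\neq\mathrm{id}$, then matches the admissible spectra of $(g\circ\sigma)^*$ against the order-$6$ Carter classes in $\WW(\E_6)$ from Table~\ref{table:weylE6}, pinning down $(g\circ\sigma)^*$ to the class $A_1\times A_5$ and $\sigma^*$ to $A_1^4$, and only then reads off $X(\RR)\approx\SSS^2\sqcup\RR\PP^2$ from Wall's table. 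You bypass the order-$6$ tables entirely: your parity mechanism --- minimality forces $\sigma^*=-\mathrm{id}$ on $\mathbb{E}_6^{\,g}$, while complex conjugation interchanges the $\omega_3$- and $\conj{\omega}_3$-eigenspaces of $g^*$ and hence makes $a=\dim(\mathbb{E}_6\otimes\RR)^{\sigma^*}$ even --- combined with the correctly signed Lefschetz--Comessatti formula $\Eu(X(\RR))=1-\tr_{\mathbb{E}_6}(\sigma^*)$ (you rightly account for the antiholomorphic sign: the topological action on $H^2$ is minus the Galois action on divisor classes; the formula checks against all five Schl\"afli types $(N,\Eu)\in\{(27,-5),(15,-3),(7,-1),(3,1),(3,3)\}$) isolates $\Eu\in\{7,3,-1\}$, after which $7$ is topologically impossible, $-1$ dies by your orbit argument $3\mid N$, and $\Eu=3$ contradicts $\RR$-rationality. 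Your alternative exclusion of $A_2^3$ (fixed locus finite because a pointwise fixed line on $\XC$ would be a $G$-invariant line, nonempty by Remark~\ref{rem: holom Lef}, hence $\tr(g^*)\geq -2$) is a valid substitute for the paper's argument. What each approach buys: the paper's class-matching additionally identifies the exact conjugacy classes of $(g\circ\sigma)^*$ and $\sigma^*$ (and the count of three real lines), which is what supports Remark~\ref{rem: counterexample} and Example~\ref{ex: counterexample}; yours is shorter, needs only the order-$3$ data, and makes structurally transparent exactly why the connected $N=3$ type ($\Eu=1$, i.e.\ $a=3$ odd) is killed while the disconnected one survives. One step worth a sentence in your write-up: your ``$X$ is $G$-minimal precisely when $\sigma^*$ has no nonzero fixed vector on $\mathbb{E}_6^{\,g}$'' tacitly equates $G$-minimality with $\rk\Pic(X)^G=1$, which by Theorem~\ref{classification} requires ruling out the conic-bundle alternative with $\rk\Pic(X)^G=2$; for a cubic and $|G|$ odd this follows from the argument of Section~\ref{sec:cb} (odd-order elements cannot interchange the components of the singular fibres), and the paper's own proof makes the same implicit identification when invoking Lemma~\ref{cyclicminimality}, so this is a shared convention rather than a gap.
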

\begin{proof}
Let $g$ be a generator of $G$. Table \ref{table:weylE6} shows that $\tr(g^*)\in\{-3,0,3\}$. As we saw above, $\tr(g^*)\ne -3$, as $g$ is defined over $\RR$. In the remaining two cases we see from the same table that $g$ has some eigenvalues equal to 1, so the complex involution $\sigma$ maps nontrivially to $\WW(\E_6)$ by Lemma \ref{cyclicminimality}.

\begin{longtable}{|c|c|c|c|}
	\caption{Elements of order 2, 3, 6 and 9 in $\WW(\E_6)$}
	\label{table:weylE6} \\
	\hline
	{Order} & {Carter graph} & {Characteristic polynomial} & $\tr$ \\ \hline
	
	{2} & {$A_1$} & {$p_1(t-1)^5$} & {4}  \\ \hline
	{2} & {$A_1^2$} & {$p_1^2(t-1)^4$} & {2}   \\ \hline
	{2} & {$A_1^3$} & {$p_1^3(t-1)^3$} & {0}   \\ \hline
	{2} & {$A_1^4$} & {$p_1^4(t-1)^2$} & {$-2$}  \\ \hline
	
	{3} & {$A_2$} & {$(t^2+t+1)(t-1)^4$} & {3}   \\ \hline
	{3} & {$A_2^2$} & {$(t^2+t+1)^2(t-1)^2$} & {0}   \\ \hline
	{3} & {$A_2^3$} & {$(t^2+t+1)^3$} & {$-3$}   \\ \hline
	
	{6} & {$E_6(a_2)$} & {$(t^2+t+1)(t^2-t+1)^2$} & {1}   \\ \hline
	{6} & {$D_4$} & {$(t+1)(t^3+1)(t-1)^2$} & {1}  \\ \hline
	{6} & {$A_1\times A_5$} & {$(t+1)(t^5+t^4+t^3+t^2+t+1)$} & {$-2$}  \\ \hline
	{6} & {$A_1^2\times A_2$} & {$(t+1)^2(t^2+t+1)(t-1)^2$} & {$-1$}  \\ \hline
	{6} & {$A_1\times A_2$} & {$(t+1)(t^2+t+1)(t-1)^3$} & {1}   \\ \hline
	{6} & {$A_1\times A_2^2$} & {$(t+1)(t^2+t+1)^2(t-1)$} & {$-2$}   \\ \hline
	{6} & {$A_5$} & {$(t^5+t^4+t^3+t^2+t+1)(t-1)$} & {0}   \\ \hline
	{9} & {$E_6(a_1)$} & {$t^6+t^3+1$} & {0}  \\ \hline

\end{longtable}

Consider the case $\tr(g^*)=3$ first. We have $\Sp(g^*)=\{1,1,1,1,\omega_3,\overline{\omega}_3\}$, so, as in the previous section, we get $\Sp(g\circ\sigma)^*=\{\pm 1,\pm 1,\pm 1,\pm 1,\pm\omega_3,\pm\overline{\omega}_3\}$. Since $\XC$ is $\langle g\circ\sigma\rangle$-minimal, 
\[\Sp(g\circ\sigma)^*=\{- 1,- 1,- 1,- 1,\pm\omega_3,\pm\overline{\omega}_3\},\]
by Lemma \ref{cyclicminimality}. Thus $\tr(g\circ\sigma)^*\in\{-3,-5\}$. Table \ref{table:weylE6} shows that there are no such elements in $\WW(\E_6)$.

Now let $\tr(g^*)=0$. In this case $\Sp(g^*)=\{1,1,\omega_3,\overline{\omega}_3,\omega_3,\overline{\omega}_3\}$. We have the following possibilities for $\Sp(g\circ\sigma)^*$:
\begin{table}[H]
\begin{tabular}{|c|c|c|ll}
\cline{1-3}
\text{Eigenvalues} & \text{Characteristic polynomial} & $\tr(\tau\circ\sigma)^*$ \\ \cline{1-3}
  $-1,-1,\omega_3,\overline{\omega}_3,\omega_3,\overline{\omega}_3$ & $(t+1)^2(t^2+t+1)^2$ & $-4$  \\ \cline{1-3}
  $-1,-1,-\omega_3,-\overline{\omega}_3,\omega_3,\overline{\omega}_3$ & $(t+1)^2(t^2+t+1)(t^2-t+1)$ & $-2$  \\ \cline{1-3}
  $-1,-1,-\omega_3,-\overline{\omega}_3,-\omega_3,-\overline{\omega}_3$ & $(t+1)^2(t^2-t+1)^2$ & $0$  \\ \cline{1-3}
\end{tabular}
\end{table}
Thus $(g\circ\sigma)^*$ belongs to the class $A_1\times A_5$. Moreover, $\Sp(\sigma^*)=\{-1,-1,-1,-1,1,1\}$, and $\sigma^*$ belongs to the class $A_1^4$. It can be shown that there are exactly 3 real $(-1)$-curves on $X$ in this case, and $X(\RR)\approx\SSS^2\sqcup\RR\PP^2$ \cite[Table 2]{Wall}. In particular, $X$ is not $\RR$-rational, a contradiction.
\end{proof}

Next example shows that the $\RR$-rationality condition in Proposition \ref{prop: dP3 Z3-minim} cannot be omitted.

\begin{ex}\label{ex: counterexample}
	Let $S_\alpha$ be the cubic surface in $\PP_\RR^3$ given by the equation
	\[
	\alpha x_0^3+x_1^3+x_2^3+x_3^3-(x_0+x_1+x_2+x_3)^3=0
	\]
	It can be shown that for $1/16<\alpha<1/4$ the set of real points $S_\alpha(\RR)$ is not connected and homeomorphic to $\SSS^2\sqcup\RR\PP^2$ (one can find a detailed study of the topology of $S_\alpha(\RR)$ and lines on $S_\alpha$ in \cite{polotop}). In particular, $S_\alpha$ are not $\RR$-rational for such $\alpha$'s. There are only 3 real lines on $S_\alpha$ which are given by the equations
	\[
	\ell_1:\ x_0=x_1+x_2=0,\ \ \ \ell_2:\ x_0=x_2+x_3=0,\ \ \ \ell_3:\ x_0=x_1+x_3=0.
	\]
	These lines form a triangle: \[\ell_1\cap\ell_2=[0:1:-1:1],\ \ \ell_1\cap\ell_3=[0:-1:1:1],\ \ \ell_2\cap\ell_3=[0:-1:-1:1].\]
	The cyclic permutation of the coordinates $g:\ x_1\mapsto x_2\mapsto x_3$ induces the permutation of lines: $\ell_1\mapsto\ell_2\mapsto\ell_3$. Besides, there are no disjoint complex conjugate lines on $S_\alpha$. Indeed, otherwise $S_\alpha$ is birationally trivial over $\RR$ (via the map (\ref{eq: bir trivial cubic}) in Proposition \ref{prop: dp5 order 5}) and $S_\alpha(\RR)$ must be connected. So, $S_\alpha$ is $g$-minimal.
\end{ex} 

\subsection{del Pezzo surfaces of degree $2$}\label{sec:dp2}
In this subsection $X$ denotes a real del Pezzo surface of degree 2. Recall that the anticanonical map 
\[
\varphi_{|-K_X|}: X\rightarrow\PR
\]
is a double cover branched over a smooth quartic $B\subset\PR$. The Galois involution $\gamma$ of the double cover is called the {\it Geiser involution}. Let $F(x,y,z)=0$ be the equation of $B$. Then $X$ can be given by the equation
\[
w^2=F(x,y,z)
\]
in the weighted projective space $\mathbb{P}(1,1,1,2)$.
\begin{rem}\label{geiserminimality}
Recall that we denoted by $\mathbb{E}_7$ the sublattice in $\Pic(X_\CC)$ generated by the root system $\E_7$. It is known that the Geiser involution $\gamma$ acts as the minus identity in $\mathbb{E}_7$ \cite[6.6]{di}. Moreover, $\rk\Pic(X_\CC)^\gamma=1$, so a del Pezzo surface $X_\CC$ of degree 2 is always $\gamma$-minimal.
\end{rem}

It is clear that $B$ should be invariant with respect to any automorphism of $X$, so there exists a homomorphism 
\[
\chi: \Aut(X)\rightarrow\Aut(B),
\] 
whose kernel is $\langle\gamma\rangle$. In fact, one can easily see that $\Aut(B)\cong\Aut(X)/\langle\gamma\rangle$. As $G$ has odd order, we have $G\subset\Aut(B)\subset\PGL_3(\RR)$, so $G$ is cyclic by Proposition \ref{PGLclassification}. 

Denote by $g$ a generator of $G$ whose order equals $n$. Choose coordinates in such a way that the action of $g$ on $H^0(X,-K_X)\otimes_\RR\CC\cong\CC^3$ has the form
\[
(x,y,z)\mapsto (x,\omega_n^ky,\omega_n^{-k}z),\ \ 0<k<n.
\]
If $n\geq 5$, then $F(x,y,z)$ is a linear combination of the monomials $x^4$, $x^2yz$ and $y^2z^2$, so $B$ is singular at the point $[0:1:0]$. Therefore, it remains to consider the case
\[
G=\langle g\rangle_3\cong\ZZ/3\ZZ.
\]
Denote by $B'$ the quotient curve $B/G$. Then, by Riemann–Hurwitz formula, we have
\[
2-2g(B)=|G|\left ( 2-2g(B')-\sum_{x\in B}\left ( 1-\frac{1}{|\stab x|} \right )\right ),
\]
where $\stab x$ denotes the stabilizer subgroup of a point $x\in B$. Let $N$ be the number of points on $B$ fixed by $G$. Since $g(B)=3$ and $G\cong\ZZ/3\ZZ$, we have
\[
N=5-3g(B'),
\]
so either $N=2$, or $N=5$. Obviously, an element $g\in\PGL_3(\RR)$ of order 3 cannot have  five (possibly nonreal) fixed points, so it remains to consider the first case $N=2$.

Note that there is the third fixed point $p\notin B(\CC)$ (which is real). It means that we have 4 fixed points on $\XC$ in total.

Recall that there is a homomorphism
\[
\eta\times\rho: \Gamma\times G\rightarrow\WW(\E_7).
\]
\begin{lem}\label{delPezzo2Lemma}
Let $X$ be a real $G$-minimal del Pezzo surface of degree 2 with $X(\RR)\ne\varnothing$, where the order of $G$ is odd. Then $\eta(\Gamma)\ne{\rm id}.$
\end{lem}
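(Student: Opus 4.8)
The plan is to argue by contradiction, turning the assumption $\eta(\Gamma)={\rm id}$ into a violation of $G$-minimality via a trace computation in $\WW(\E_7)$. Recall that we have already reduced to $G=\langle g\rangle\cong\ZZ/3\ZZ$ and located four fixed points of $g$ on $\XC$: the two ramification points lying over the points of $B$ fixed by $g$, together with the two points of $\varphi_{|-K_X|}^{-1}(p)$, which are interchanged by the Geiser involution but individually fixed by $g$ (an order-$3$ automorphism cannot transpose them). So suppose, for contradiction, that $\eta(\Gamma)={\rm id}$, i.e.\ $\sigma$ acts trivially on $\Pic(\XC)$.

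First I would check that $\Fix(g,\XC)$ is exactly this discrete set of four points. Since $\varphi_{|-K_X|}$ is a finite morphism intertwining $g$ with $\diag[1:\omega_3^k:\omega_3^{-k}]\in\Aut(\PR)$, every fixed point of $g$ lies over one of the three (isolated) fixed points of this diagonal automorphism, and no irreducible curve can be contracted by the finite map $\varphi_{|-K_X|}$; hence $g$ fixes no curve pointwise and $\Fix(g,\XC)$ consists precisely of the four points above. Consequently $\Eu(\XC^g)=4$.

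Next I would extract the eigenvalues of $g^*$. Plugging $\Eu(\XC^g)=4$ into \eqref{LefschetzFixedPointFormula} gives $\tr(g^*)=4-3=1$. Because $\rho$ is injective in degree $2$, the element $g^*$ has order exactly $3$, so its eigenvalues on $\mathbb{E}_7$ lie in $\{1,\omega_3,\overline{\omega}_3\}$ and the non-real ones occur in conjugate pairs. If $a$ is the multiplicity of $1$ and $b$ the number of pairs $\{\omega_3,\overline{\omega}_3\}$, then $a+2b=7$ and $a-b=\tr(g^*)=1$, which forces $a=3$, $b=2$. In particular $1\in\Sp(g^*)$.

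Finally I would close the argument. Under $\eta(\Gamma)={\rm id}$ we have $\Pic(\XC)^\Gamma=\Pic(\XC)$, so the remark following Theorem \ref{rminimal} (valid since $X(\RR)\ne\varnothing$) gives $\rk\Pic(X)^G=\rk\Pic(\XC)^{\langle g\rangle}=1+a=4$; equivalently, since $1\in\Sp(g^*)$, Lemma \ref{cyclicminimality} shows that $\XC$ is not $\langle g\rangle$-minimal. Either way $X$ fails to be $G$-minimal, contradicting the hypothesis, and therefore $\eta(\Gamma)\ne{\rm id}$. The only subtle point is the discreteness of the fixed locus underlying $\Eu(\XC^g)=4$; once that is in hand the rest is a one-line eigenvalue count. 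An alternative route bypassing Euler characteristics entirely is to observe that $g$ permutes the $56$ lines of $\XC$ in orbits of size $1$ or $3$, and since $56\equiv 2\pmod 3$ at least two of them are $g$-invariant; if $\eta(\Gamma)={\rm id}$ each such line is real (its class is both $g^*$- and $\sigma^*$-invariant, and the $(-1)$-curve is the unique effective divisor in its class), giving a real $G$-invariant exceptional curve and hence $\rk\Pic(X)^G>1$ by Lemma \ref{mainlemma} --- the same contradiction.
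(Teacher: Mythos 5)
Your proof is correct, but it takes a genuinely different route from the paper's. The paper's own argument is a two-line parity computation: assuming $\eta(\Gamma)={\rm id}$, minimality gives $\rk\Pic(\XC)^G=1$, so the sum of any $G$-orbit $E_1,\ldots,E_s$ of $(-1)$-curves equals $aK_{\XC}$ with $a\in\ZZ$; intersecting with $K_{\XC}$ yields $2a=-s$, so $s$ is even, which is impossible because orbit sizes divide the odd number $|G|$. This needs no reduction to $\ZZ/3\ZZ$, no fixed-point count and no Weyl-group data, and it proves the lemma for all odd orders at one stroke, as stated. Your main route instead leans on the pre-lemma reductions (that $G=\langle g\rangle_3$ and that $g$ has four fixed points on $\XC$), verifies discreteness of the fixed locus via the equivariant finite double cover --- a point the paper leaves implicit and which you rightly single out as the only delicate step --- and then combines the Lefschetz formula \eqref{LefschetzFixedPointFormula} with an elementary eigenvalue count ($a+2b=7$, $a-b=1$, hence $a=3$, $b=2$) and Lemma \ref{cyclicminimality}, together with $\Pic(\XC)^\Gamma=\Pic(X)$, to get $\rk\Pic(X)^G=4$, incompatible with Theorem \ref{classification}. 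What this buys: you obtain $\Sp(g^*)=\{1,1,1,\omega_3,\overline{\omega}_3,\omega_3,\overline{\omega}_3\}$ directly, without citing Carter's tables --- exactly the datum the paper needs in the paragraph immediately after the lemma --- so your computation in effect merges the lemma with the paper's next step. Your alternative route ($56\equiv 2\pmod 3$ forces at least two $g$-invariant lines, which are real once $\sigma^*={\rm id}$ because a $(-1)$-class has a unique effective representative) is the closest in spirit to the paper's, but again presupposes the reduction to order $3$, whereas the paper's orbit-sum parity trick does not; note also that the cleanest way to finish there is to observe that a real $G$-invariant $(-1)$-curve can be contracted $G$-equivariantly over $\RR$, contradicting $G$-minimality by definition, rather than invoking Lemma \ref{mainlemma}, whose conclusion $\rk\Pic(X)^G>1$ only rules out case \textbf{D} of Theorem \ref{classification} and not a priori the conic bundle case --- a cosmetic point, since your main route's rank-$4$ conclusion excludes both cases cleanly.
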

\begin{proof}
Assuming that $\eta(\Gamma)={\rm id}$, we get ${\rm rk}\Pic(\XC)^G=1$. Let $E_1,\ E_2,\ldots, E_s$ be $s$ $(-1)$-curves on $\XC$, forming an orbit of $G$. Then
$
E_1+\ldots+E_s=aK_{\XC}$, $a\in\ZZ$,
so
\[
2a=aK_{\XC}^2=\sum_{i=1}^{s}(E_i\cdot K_{\XC})=\sum_{i=1}^{s}(-1)=-s.
\]
It follows that $s$ is even, hence $|G|$ is even too, a contradiction.
\end{proof}

Lemma \ref{delPezzo2Lemma} shows that the complex conjugation $\sigma\in\Gamma$ gives a nontrivial element $\sigma^*\in\WW(\E_7)$. It means that $(g\circ\sigma)^*$ is an element of order 6 in $\WW(\E_7)$. All 17 classes of elements of order 6 in $\WW(\E_7)$ are listed in Table \ref{table:appendixE7} (see Appendix \ref{appendix}). Since $1\notin\Sp(g\circ\sigma)^*$ by Lemma \ref{cyclicminimality}, there are in fact only four possibilities for $(g\circ\sigma)^*$:
 
\begin{table}[H]
\caption{Possibilities for $(g\circ\sigma)^*$}
\label{table:PossibleCasesDP2}
\begin{tabular}{cccclll}
\cline{1-3}
\multicolumn{1}{|c}{\text{Carter graph}} & \multicolumn{1}{|c}{\text{Characteristic polynomial}} & \multicolumn{1}{|c|}{$\tr(g\circ\sigma)^*$} &  &  \\ \cline{1-3}

\multicolumn{1}{|c}{$A_5\times A_2$} & \multicolumn{1}{|c}{$(t^5+t^4+t^3+t^2+t+1)(t^2+t+1)$} & \multicolumn{1}{|c|}{$-2$} &  &  \\ \cline{1-3}
\multicolumn{1}{|c}{$D_4\times A_1^3$} & \multicolumn{1}{|c}{$(t^3+1)(t+1)^4$} & \multicolumn{1}{|c|}{$-4$} &  &  \\ \cline{1-3}
\multicolumn{1}{|c}{$D_6(a_2)\times A_1$} & \multicolumn{1}{|c}{$(t^3+1)^2(t+1)$} & \multicolumn{1}{|c|}{$-1$} &  &  \\ \cline{1-3}
\multicolumn{1}{|c}{$E_7(a_4)$} & \multicolumn{1}{|c}{$(t^2-t+1)^2(t^3+1)$} & \multicolumn{1}{|c|}{2} &  &  \\ \cline{1-3}

\end{tabular}
\end{table}

Since $g$ has exactly 4 fixed points on $\XC$, we have $\tr g^*=1$ by the Lefschetz fixed point formula (\ref{LefschetzFixedPointFormula}). According to Table \ref{table:appendixE7}, such $g^*$ belongs to the class $A_2^2$ and
\[\Sp(g^*)=\{1,1,1,\omega_3,\overline{\omega}_3,\omega_3,\overline{\omega}_3\}.\] 

As $X_\CC$ is $g\circ\sigma$-minimal, we have the following possibilities for $\Sp(g\circ\sigma)^*$ by Lemma \ref{cyclicminimality}:
\begin{table}[H]
\begin{tabular}{|c|c|c|ll}
\cline{1-3}
\text{Eigenvalues} & \text{Characteristic polynomial} & $\tr(g\circ\sigma)^*$ \\ \cline{1-3}
  $-1,-1,-1,\omega_3,\overline{\omega}_3,\omega_3,\overline{\omega}_3$ & $(t+1)^3(t^2+t+1)^2$ & $-5$  \\ \cline{1-3}
  $-1,-1,-1,\omega_3,\overline{\omega}_3,-\omega_3,-\overline{\omega}_3$ & $(t+1)^3(t^2+t+1)(t^2-t+1)$ & $-3$  \\ \cline{1-3}
  $-1,-1,-1,-\omega_3,-\overline{\omega}_3,-\omega_3,-\overline{\omega}_3$ & $(t+1)^3(t^2-t+1)^2$ & $-1$  \\ \cline{1-3}
\end{tabular}
\end{table}
Comparing it with the data of Table \ref{table:PossibleCasesDP2}, we see that $(g\circ\sigma)^*$ belongs to the class $D_6(a_2)\times A_1$. The complex conjugation $\sigma$ acts on $K_{X_\CC}^\bot$ as minus identity, so it coincides with the Geiser involution $\gamma$. It follows from Remark \ref{geiserminimality} that $X$ is $\RR$-minimal. Therefore, $X$ is not $\RR$-rational by Theorem \ref{IskovskikhCrit}.

\begin{rem}\label{rem: dP2 non rational}
	As in the case $K_X^2=3$, the $\RR$-rationality condition cannot be omitted. Namely, there exists a real del Pezzo surface $X$ of degree 2 with a minimal action of $\ZZ/3\ZZ$ such that $X$ is not $\RR$-rational. To construct such a surface, consider seven different real points $[1:0:0]$, $[0:1:0]$, $[0:0:1]$, $[a:b:c]$, $[b:c:a]$, $[c:a:b]$, $[1:1:1]$ on $\PP_\RR^2$ in general position. Let $g\in\Aut(\PP_\RR^2)$ be a cyclic permutation of coordinates. Then our set of points is $g$-invariant, and their blow-up is a $\RR$-rational del Pezzo  surface of degree 2 with an action of $G=\ZZ/3\ZZ$. Note that all $(-1)$-curves on $X$ are real. Assume that $X$ is given by $w^2=F(x,y,z)$ in $\PP(1,1,1,2)$. Consider the surface $X'$, given by $w^2=-F(x,y,z)$. Then $X'$ is a real del Pezzo surface of degree 2 with $\sigma^*=\gamma^*$ in $\WW(\E_7)$. It is minimal over $\RR$ (in particular, $G$-minimal), hence not rational over $\RR$ by Theorem \ref{rminimal}. It is known that $X'(\RR)\approx\sqcup 4\SSS^2$, and $B(\RR)$ consists of four ovals, see \cite{kol} or \cite{Wall}.
\end{rem}

\subsection{del Pezzo surfaces of degree $1$}\label{sec:dp1}

In this subsection $X$ denotes a real del Pezzo surface of degree 1. The linear system $|-K_X|$ has a single base point $q$ and determines a rational map $\varphi: X\dashrightarrow S=\PRone$. Blowing $q$ up, we get the following commutative diagram:
\[
\xymatrix{
& \widetilde{X}\ar[ld]_{\pi}\ar[rd]^{\widetilde{\varphi}} & \\
X \ar@{-->}[rr]_{\varphi} & & S }
\]
where $\widetilde{\varphi}$ is an elliptic pencil. The linear system $|-2K_X|$ has no base points and exhibits $X$ as a double cover of a quadratic cone $Q\subset\mathbb{P}_\RR^3$ ramified over the vertex of $Q$ and a smooth curve $Q\cap Y$, where $Y$ is a cubic surface. The corresponding Galois involution $\beta$ is called the {\it Bertini involution}.
\begin{rem}
One can show that the Bertini involution $\beta$ acts as the minus identity in $\mathbb{E}_8$ and a del Pezzo surface $X_\CC$ of degree 1 is always $\beta$-minimal.
\end{rem}
Note that $q$ must be real and it is a fixed point for any automorphism group $G\subset\Aut(\XC)$. It follows that there is the natural faithful representation
\[
G\rightarrow {\rm GL}(T_qX)\cong{\rm GL_2}(\mathbb{R}),
\]
so $G$ is a cyclic group of odd order. The tables of conjugacy classes in $\WW(\E_8)$ show that the order of $G$ can be equal to $3,\ 5,\ 7,\ 9$ or $15$ \cite[Table 11]{weyl}. 

Every singular member of the linear system $|-K_{\XC}|$ is an irreducible curve of arithmetic genus 1. Therefore, it is a rational curve with a unique singularity which is either a node or a simple cusp. Denote by $n_{\rm cusp}$ the number of cuspidal curves $Z_{\rm cusp}$ and by $n_{\rm node}$ the number of nodal curves $Z_{\rm node}$ in $|-K_{\XC}|$.
\begin{lem}\label{topolemma}
We have
\[
n_{\rm node}+2n_{\rm cusp}=12.
\]
\end{lem}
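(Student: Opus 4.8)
The plan is to pass to the relatively minimal elliptic surface $\widetilde{X}$ and to compute its topological Euler characteristic $\Eu(\widetilde{X})$ in two different ways. Throughout I would work over $\CC$, since the statement concerns $|-K_{\XC}|$. Recall from the discussion preceding the lemma that blowing up the single base point $q$ produces the base-point-free pencil $\widetilde{\varphi}:\widetilde{X}\to S\cong\PCone$, whose fibres are exactly the strict transforms of the members of $|-K_{\XC}|$. Each member of $|-K_{\XC}|$ passes through $q$ once, so the exceptional curve over $q$ is a section of $\widetilde{\varphi}$; in particular the fibration has no multiple fibres. The first computation of $\Eu(\widetilde{X})$ is purely combinatorial: since $\XC$ is $\PC$ blown up at $9-1=8$ points, the surface $\widetilde{X}$ is $\PC$ blown up at $9$ points, whence $\Eu(\widetilde{X})=3+9=12$ (equivalently, by Noether's formula $\Eu(\widetilde{X})=12\chi(\mathcal{O}_{\widetilde{X}})-K_{\widetilde{X}}^2=12\cdot 1-0=12$, using $K_{\widetilde{X}}^2=K_X^2-1=0$).

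The second computation is fibrewise. The Euler characteristic is additive over the stratification of $S$ into the finite set of critical values and its complement, and by Ehresmann's theorem $\widetilde{\varphi}$ restricts to a fibre bundle over the locus of smooth fibres. A smooth fibre is an elliptic curve, with $\Eu=0$, so only the singular fibres contribute:
\[
\Eu(\widetilde{X})=\sum_{s}\Eu\big(\widetilde{\varphi}^{-1}(s)\big),
\]
the sum running over the critical values $s\in S$. By the hypothesis recalled just before the lemma, every singular member of $|-K_{\XC}|$ is an irreducible rational curve of arithmetic genus $1$ carrying a single node or a single cusp, i.e.\ a Kodaira fibre of type $I_1$ or $II$; no reducible fibres occur.

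It then remains to evaluate the two local contributions, which I would do topologically. A nodal rational curve is obtained from $\PCone$ by identifying two points, so $\Eu(Z_{\rm node})=\Eu(\PCone)-1=1$; a cuspidal rational curve is the image of $\PCone$ under a homeomorphism (normalization of a cusp), so $\Eu(Z_{\rm cusp})=\Eu(\PCone)=2$. Substituting into the fibrewise formula and equating with the first computation yields
\[
12=\Eu(\widetilde{X})=n_{\rm node}\cdot 1+n_{\rm cusp}\cdot 2,
\]
which is precisely the claimed identity $n_{\rm node}+2n_{\rm cusp}=12$.

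There is no serious obstacle in this argument; it is essentially the standard Euler-number count for a rational elliptic surface. The only two points deserving a word of care are that the fibration carries no multiple fibres (guaranteed by the section coming from $q$, so that each fibre is reduced and the local Euler numbers above are the correct ones) and that the singular fibres are exactly the irreducible nodal or cuspidal curves already described in the text, which is what excludes any reducible Kodaira type from entering the sum.
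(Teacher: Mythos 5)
Your proposal is correct and follows essentially the same route as the paper: both compute $\Eu(\widetilde{X}_\CC)=12$ (the blow-up of $\PC$ at nine points) and equate it with the fibrewise sum over the elliptic pencil, where smooth fibres contribute $0$, nodal fibres $1$, and cuspidal fibres $2$. Your additional remarks (the section coming from $q$ excluding multiple fibres, Ehresmann's theorem over the smooth locus) merely make explicit the justifications the paper leaves implicit.
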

\begin{proof}
All that we need is to compute the topological Euler characteristic of $\widetilde{X}_\CC$. Namely,
\[
\Eu(\widetilde{X}_\CC)=n_{\rm node}\Eu(Z_{\rm node})+n_{\rm cusp}\Eu(Z_{\rm cusp})=n_{\rm node}+2n_{\rm cusp}.
\] 
On the other hand,
\[
\Eu(\widetilde{X}_\CC)=\Eu(\XC)+1=\Eu(\PC)+8+1=12.
\]
\end{proof}

The action of $G$ on the pencil $|-K_X|$ induces the action on $S=\PRone$. This gives us the natural homomorphism $\mu: G\to\Aut(S)={\rm PGL}_2(\RR)$. Consider two cases.
\newline
\newline
{\bf Case} $\mu(G)={\rm id}$. Since $S$ can be naturally identified with $\PP(T_qX)$, the image of $G$ in $\GL(T_qX)$ consists of scalar matrices. Obviously, this is impossible because the order of $G$ is odd.
\newline
\newline
{\bf Case} $\mu(G)\ne {\rm id}$. There are exactly two conjugate imaginary fixed points on $S_\CC\cong\PCone$ (since the order of $\mu(G)$ is not 2). These points correspond to complex conjugate $G$-invariant curves $C$ and $\sigma(C)=\conj{C}$ in the linear system $|-K_{\XC}|$. We have three different cases.

{\bf a)} Let $C$ and $\conj{C}$ be nodal curves. Consider the normalization $\nu: \widehat{C}\to C$. Then the cyclic group $G$ has three fixed points $\nu^{-1}(\textrm{node})$ and $\nu^{-1}(q)$ on $\widehat{C}\cong\PCone$. Hence, $G$ acts trivially on $C$, a contradiction. 

{\bf b)} Now let $C$ and $\conj{C}$ be cuspidal curves. Put $n_{\rm cusp}=n_{\rm cusp}'+2$. Then $n_{\rm node}+2n_{\rm cusp}'=8$, so we have the following possibilities for a pair $(n_{\rm node},n_{\rm cusp}')$:
\[
(0,4),\ (2,3), (4,2),\ (6,1),\ (8,0).
\]
It is obvious that none of these cases occurs, as the curves of the same singularity type must be exchanged by $G$.

{\bf c)} Finally, let $C$ and $\conj{C}$ be smooth elliptic curves. It is well-known that the order of any automorphism of an elliptic curve, preserving the group structure, divides 24. Thus $G=\langle g\rangle_3\cong\ZZ/3\ZZ$. There are exactly 3 fixed points on each curve and $\{q\}=C(\RR)=\conj{C}(\RR)$ is the only {\it real} point fixed by $G$. Note that we have 5 fixed points in total.  By the Lefschetz fixed point formula,
\[
\tr(g^*)=\#\Fix_{\XC}(g)-3=2.
\]
To find a specific type of action, we turn to the tables of conjugacy classes in $\WW(\E_8)$. Now we are interested in elements of order 3.
\begin{table}[H]
\caption{Elements of order 3 in $\WW(\E_8)$}
\label{table: elements3weyl8}
\begin{tabular}{|c|c|c|ll}
\cline{1-3}
\text{Carter graph} & \text{Characteristic polynomial} & Trace on $K_{X_\CC}^\bot$ \\ \cline{1-3}
  $A_2$ & $(t^2+t+1)(t-1)^6$ & 5  \\ \cline{1-3}
  $A_2^2$ & $(t^2+t+1)^2(t-1)^4$ & 2  \\ \cline{1-3}
  $A_2^3$ & $(t^2+t+1)^3(t-1)^2$ & $-1$  \\ \cline{1-3}
  $A_2^4$ & $(t^2+t+1)^4$ & $-4$  \\ \cline{1-3}
\end{tabular}
\end{table}
We see that $g^*$ belongs to the class $A_2^2$ and \[\Sp(g^*)=\{1,1,1,1,\omega_3,\overline{\omega}_3,\omega_3,\overline{\omega}_3\}.\] According to Lemma \ref{cyclicminimality}, a surface $\XC$ is not $\langle g\rangle$-minimal for such $g$. Thus $\eta(\Gamma)\ne{\rm id}$ and we are looking for elements of order 6 in $\WW(\E_8)$. Note that there are only 3 possibilities for $\Sp(g\circ\sigma)^*$:

\begin{table}[H]
\begin{tabular}{|c|c|c|ll}
\cline{1-3}
\text{Eigenvalues} & \text{Characteristic polynomial} & $\tr(g\circ\sigma)^*$ \\ \cline{1-3}
  $-1,-1,-1,-1,\omega_3,\overline{\omega}_3,\omega_3,\overline{\omega}_3$ & $(t+1)^4(t^2+t+1)^2$ & $-6$  \\ \cline{1-3}
  $-1,-1,-1,-1,-\omega_3,-\overline{\omega}_3,\omega_3,\overline{\omega}_3$ & $(t+1)^4(t^2+t+1)(t^2-t+1)$ & $-4$  \\ \cline{1-3}
  $-1,-1,-1,-1,-\omega_3,-\overline{\omega}_3,-\omega_3,-\overline{\omega}_3$ & $(t+1)^4(t^2-t+1)^2$ & $-2$  \\ \cline{1-3}
\end{tabular}
\end{table}

In Table \ref{table:appendixE8} (see Appendix \ref{appendix}) we list the conjugacy classes of elements of order 6 in $\WW(\E_8)$. It turns out that only the third case in the table above actually occurs. Such an element belongs to the class $D_4^2$. Moreover, we get that the complex involution acts on $K_{\XC}^\bot$ as minus identity, i.e. coincides with the Bertini involution. It follows that $X$ is $\RR$-minimal. Finally, according to Theorem \ref{IskovskikhCrit}, $X$ fails to be rational over $\RR$.

We close this section by proving that, unlike the cases $K_X^2=2$ and $K_X^2=3$, we do not really need $X$ to be rational over $\RR$, and may only assume $X(\RR)\ne\varnothing$.
\begin{prop}
Let $X$ be a real del Pezzo surface of degree 1 with $X(\RR)\ne\varnothing$ and $G\subset\Aut(X)$ is a subgroup of odd order. Then $X$ is not $G$-minimal.
\end{prop}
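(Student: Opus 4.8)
The plan is to mirror the structure of the degree-$2$ case (Proposition~\ref{DP2IsNeverG-Minimal}): reduce to $G\cong\ZZ/3\ZZ$ and then extract a contradiction from a fixed-point count on the real locus. By the case analysis carried out above, if $X$ were $G$-minimal then necessarily $G\cong\langle g\rangle_3$, we would be in case (c) (the invariant curves $C,\conj C$ smooth and elliptic), and $\sigma$ would act on $K_{\XC}^\bot$ as minus the identity, i.e. coincide on cohomology with the Bertini involution $\beta$; in particular $X$ is $\RR$-minimal. So it suffices to rule out this single remaining configuration.

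First I would pin down the Euler characteristic of $X(\RR)$. Applying the topological Lefschetz fixed point formula to the antiholomorphic involution $\sigma$, whose fixed locus is exactly $X(\RR)$, and using that $\sigma$ preserves the orientation of $\XC$ while acting on $H^2(\XC,\RR)=\Pic(\XC)\otimes\RR$ as $-\sigma^*$, one obtains the Comessatti-type identity $\Eu(X(\RR))=2-\tr\big(\sigma^*\mid\Pic(\XC)\otimes\RR\big)$. Since $\sigma^*$ fixes $K_{\XC}$ and equals $-\mathrm{id}$ on the rank-$8$ lattice $K_{\XC}^\bot$, its trace on $\Pic(\XC)$ is $1-8=-7$, whence $\Eu(X(\RR))=9$. (Equivalently, one may invoke the classification of $\RR$-minimal del Pezzo surfaces of degree $1$, for which $X(\RR)\approx\RR\PP^2\sqcup 4\,\SSS^2$.) The point I really need is only that $\Eu(X(\RR))\equiv 0\pmod 3$.

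The contradiction then comes from the $G$-action on the real surface. In case (c) the group $G$ has exactly five fixed points on $\XC$, of which only the base point $q$ is real; since the real fixed locus is contained in this finite complex fixed locus, $X(\RR)^G=\{q\}$, an isolated point with $\Eu(X(\RR)^G)=1$. On the other hand, for any action of $\ZZ/p$ on a compact manifold the complement of the fixed locus is a free $\ZZ/p$-space, so $\Eu(X(\RR))\equiv\Eu(X(\RR)^G)\pmod p$; with $p=3$ this would force $9\equiv 1\pmod 3$, which is false. I expect the only delicate point to be bookkeeping the sign in the formula for $\Eu(X(\RR))$ and verifying that the real fixed locus is genuinely the single point $q$. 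A more hands-on variant avoids the Euler-characteristic formula: since $G$ permutes the four spherical components of $X(\RR)\approx\RR\PP^2\sqcup 4\,\SSS^2$ and $4\equiv 1\pmod 3$, at least one sphere must be $G$-invariant, and (as in Proposition~\ref{prop: dP8_main_prop}) an order-$3$ rotation of $\SSS^2$ has two fixed points, producing real fixed points other than $q$ — the same contradiction.
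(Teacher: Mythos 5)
Your proof is correct, and your primary argument takes a genuinely different route from the paper's. Both proofs run through the same reduction (which you correctly invoke from the preceding analysis): $G=\langle g\rangle_3$, the only real $G$-fixed point is the base point $q$ of the elliptic pencil, $\sigma^*$ acts as $-\mathrm{id}$ on $K_{\XC}^\bot$, and $X$ is minimal over $\RR$. From there the paper quotes Koll\'ar's classification $X(\RR)\approx\RR\PP^2\sqcup 4\,\SSS^2$, notes that $G$ must leave at least one sphere invariant (four spheres, orbits of size $1$ or $3$) and also preserves $\RR\PP^2$, and then applies classical fixed-point theorems (an odd-order self-map of $\SSS^2$ has degree $1$, hence nonzero Lefschetz number; every continuous self-map of $\RR\PP^2$ has a fixed point) to produce at least two real fixed points, contradicting the uniqueness of $q$. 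You instead combine the Smith-type congruence $\Eu(X(\RR))\equiv\Eu\bigl(X(\RR)^G\bigr)\pmod 3$ (valid here since the action is algebraic, hence smooth, on the compact surface $X(\RR)$) with the Comessatti identity $\Eu(X(\RR))=2-\tr\bigl(\sigma^*\mid\Pic(\XC)\otimes\RR\bigr)$; your sign bookkeeping is right ($\sigma$ preserves the orientation of the $4$-manifold, and the topological action on $H^2$ is minus the Picard action), so $\tr(\sigma^*)=1-8=-7$ and $\Eu(X(\RR))=9\equiv 0$, while $X(\RR)^G=\{q\}$ gives $\Eu=1$ — a clean contradiction. What your route buys: it never needs Koll\'ar's topological classification of $\RR$-minimal degree-$1$ del Pezzo surfaces (you cite it only as a cross-check), since $\Eu(X(\RR))=9$ falls out of the lattice computation already performed; it is thus more self-contained. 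What the paper's route buys: it avoids both the equivariant Euler-characteristic congruence and the sign conventions in the Comessatti formula, using only elementary fixed-point theory, at the cost of importing the classification result. Your closing ``hands-on variant'' (invariant sphere, order-$3$ rotation with two fixed points) is essentially the paper's own proof, so you have in effect recovered it as a corollary of your setup. The two delicate points you flagged — the sign in the trace formula and the identification $X(\RR)^G=\{q\}$ via the five complex fixed points, of which only $q$ is $\sigma$-invariant — are indeed the only places where care is needed, and both check out.
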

\begin{proof}
Clearly, it is sufficient to prove that $G=\ZZ/3\ZZ$ cannot act minimally on $X$. Assume the contrary. As it was shown above, there is a single real fixed point $q\in X$ (the base point of the elliptic pencil). Moreover, $X$ has to be minimal over $\RR$. According to \cite[Theorem 6.8]{kol}, we have
\[
X(\RR)\approx\mathbb{RP}^2\sqcup4\SSS^2.
\]
Obviously, at least one sphere must be $G$-invariant. On the other hand, any continuous map of odd order from $\SSS^2$ to itself has a fixed point and the same is true for any continuous map $\mathbb{RP}^2\to \mathbb{RP}^2$ (see e.g. \cite[Chapter 2, \S 2C]{Hat}). Therefore, there are at least two real fixed points, a contradiction.
\end{proof}

\newpage

\appendix

\section{Conjugacy classes in some Weyl groups}\label{appendix}

{\bf Notation}. We denote by $p_k$ a polynomial of the form $t^k+t^{k-1}+\ldots+t+1$. 

\begin{table}[H]
\caption{Elements of order 2, 3 and 6 in $\WW(\E_7)$}
\label{table:appendixE7}
\begin{tabular}{cccll}
\cline{1-3}

\multicolumn{1}{|c}{Order} & \multicolumn{1}{|c}{\text{Carter graph}} & \multicolumn{1}{|c|}{\text{Characteristic polynomial}} &  &  \\ \cline{1-3}
\multicolumn{1}{|c}{2} & \multicolumn{1}{|c}{$A_1$} & \multicolumn{1}{|c|}{$p_1(t-1)^6$}  &  &  \\ \cline{1-3}
\multicolumn{1}{|c}{2} & \multicolumn{1}{|c}{$A_1^2$} & \multicolumn{1}{|c|}{$p_1^2(t-1)^5$}  &  &  \\ \cline{1-3}
\multicolumn{1}{|c}{2} & \multicolumn{1}{|c}{$(A_1^3)'$} & \multicolumn{1}{|c|}{$p_1^3(t-1)^4$}  &  &  \\ \cline{1-3}
\multicolumn{1}{|c}{2} & \multicolumn{1}{|c}{$(A_1^3)''$} & \multicolumn{1}{|c|}{$p_1^3(t-1)^4$}  &  &  \\ \cline{1-3}
\multicolumn{1}{|c}{2} & \multicolumn{1}{|c}{$(A_1^4)'$} & \multicolumn{1}{|c|}{$p_1^4(t-1)^3$}  &  &  \\ \cline{1-3}
\multicolumn{1}{|c}{2} & \multicolumn{1}{|c}{$(A_1^4)''$} & \multicolumn{1}{|c|}{$p_1^4(t-1)^3$} &  &  \\ \cline{1-3}
\multicolumn{1}{|c}{2} & \multicolumn{1}{|c}{$A_1^5$} & \multicolumn{1}{|c|}{$p_1^5(t-1)^2$}  &  &  \\ \cline{1-3}
\multicolumn{1}{|c}{2} & \multicolumn{1}{|c}{$A_1^6$} & \multicolumn{1}{|c|}{$p_1^6(t-1)$}  &  &  \\ \cline{1-3}
\multicolumn{1}{|c}{2} & \multicolumn{1}{|c}{$A_1^7$} & \multicolumn{1}{|c|}{$p_1^7$}  &  &  \\ \cline{1-3}

\multicolumn{1}{|c}{3} & \multicolumn{1}{|c}{$A_2$} & \multicolumn{1}{|c|}{$p_2(t-1)^5$}  &  &  \\ \cline{1-3}
\multicolumn{1}{|c}{3} & \multicolumn{1}{|c}{$A_2^2$} & \multicolumn{1}{|c|}{$p_2^2(t-1)^3$}  &  &  \\ \cline{1-3}
\multicolumn{1}{|c}{3} & \multicolumn{1}{|c}{$A_2^3$} & \multicolumn{1}{|c|}{$p_2^3(t-1)$}  &  &  \\ \cline{1-3}

\multicolumn{1}{|c}{6} & \multicolumn{1}{|c}{$A_2\times A_1$} & \multicolumn{1}{|c|}{$p_2p_1(t-1)^4$} &  &  \\ \cline{1-3}
\multicolumn{1}{|c}{6} & \multicolumn{1}{|c}{$A_2\times A_1^2$} & \multicolumn{1}{|c|}{$p_2p_1^2(t-1)^3$}  &  &  \\ \cline{1-3}
\multicolumn{1}{|c}{6} & \multicolumn{1}{|c}{$D_4$} & \multicolumn{1}{|c|}{$(t^3+1)(t+1)(t-1)^3$}  &  &  \\ \cline{1-3}
\multicolumn{1}{|c}{6} & \multicolumn{1}{|c}{$A_2\times A_1^3$} & \multicolumn{1}{|c|}{$p_2p_1^3(t-1)^2$}  &  &  \\ \cline{1-3}
\multicolumn{1}{|c}{6} & \multicolumn{1}{|c}{$A_2^2\times A_1$} & \multicolumn{1}{|c|}{$p_2^2p_1(t-1)^2$}  &  &  \\ \cline{1-3}
\multicolumn{1}{|c}{6} & \multicolumn{1}{|c}{$(A_5)'$} & \multicolumn{1}{|c|}{$p_5(t-1)^2$}  &  &  \\ \cline{1-3}
\multicolumn{1}{|c}{6} & \multicolumn{1}{|c}{$(A_5)''$} & \multicolumn{1}{|c|}{$p_5(t-1)^2$} &  &  \\ \cline{1-3}
\multicolumn{1}{|c}{6} & \multicolumn{1}{|c}{$D_4\times A_1$} & \multicolumn{1}{|c|}{$(t^3+1)(t+1)^2(t-1)^2$}  &  &  \\ \cline{1-3}
\multicolumn{1}{|c}{6} & \multicolumn{1}{|c}{$(A_5\times A_1)'$} & \multicolumn{1}{|c|}{$p_5p_1(t-1)$}  &  &  \\ \cline{1-3}
\multicolumn{1}{|c}{6} & \multicolumn{1}{|c}{$(A_5\times A_1)''$} & \multicolumn{1}{|c|}{$p_5p_1(t-1)$} &  &  \\ \cline{1-3}
\multicolumn{1}{|c}{6} & \multicolumn{1}{|c}{$D_4\times A_1^2$} & \multicolumn{1}{|c|}{$(t^3+1)(t+1)^3(t-1)$}  &  &  \\ \cline{1-3}
\multicolumn{1}{|c}{6} & \multicolumn{1}{|c}{$D_6(a_2)$} & \multicolumn{1}{|c|}{$(t^3+1)^2(t-1)$}  &  &  \\ \cline{1-3}
\multicolumn{1}{|c}{6} & \multicolumn{1}{|c}{$E_6(a_2)$} & \multicolumn{1}{|c|}{$(t^2+t+1)(t^2-t+1)^2(t-1)$}  &  &  \\ \cline{1-3}
\multicolumn{1}{|c}{6} & \multicolumn{1}{|c}{$A_5\times A_2$} & \multicolumn{1}{|c|}{$p_5p_2$}  &  &  \\ \cline{1-3}
\multicolumn{1}{|c}{6} & \multicolumn{1}{|c}{$D_4\times A_1^3$} & \multicolumn{1}{|c|}{$(t^3+1)(t+1)^4$}  &  &  \\ \cline{1-3}
\multicolumn{1}{|c}{6} & \multicolumn{1}{|c}{$D_6(a_2)\times A_1$} & \multicolumn{1}{|c|}{$(t^3+1)^2(t+1)$} &  &  \\ \cline{1-3}
\multicolumn{1}{|c}{6} & \multicolumn{1}{|c}{$E_7(a_4)$} & \multicolumn{1}{|c|}{$(t^2-t+1)^2(t^3+1)$}  &  &  \\ \cline{1-3}

\end{tabular}
\end{table}

\newpage

\setlength{\extrarowheight}{3pt}
\begin{table}[H]
\caption{Elements of order 6 in $\WW(\E_8)$}
\label{table:appendixE8}
\begin{tabular}{cccll}
\cline{1-3}
\multicolumn{1}{|c}{Order} & \multicolumn{1}{|c}{Carter graph} & \multicolumn{1}{|c|}{Characteristic polynomial} &  &  \\ \cline{1-3}

\multicolumn{1}{|c}{6} & \multicolumn{1}{|c}{$A_2\times A_1$} & \multicolumn{1}{|c|}{$p_2p_1(t-1)^5$}  &  &  \\ \cline{1-3}
\multicolumn{1}{|c}{6} & \multicolumn{1}{|c}{$A_2\times A_1^2$} & \multicolumn{1}{|c|}{$p_2p_1^2(t-1)^4$}  &  &  \\ \cline{1-3}
\multicolumn{1}{|c}{6} & \multicolumn{1}{|c}{$D_4$} & \multicolumn{1}{|c|}{$(t^3+1)(t+1)(t-1)^4$} &  &  \\ \cline{1-3}
\multicolumn{1}{|c}{6} & \multicolumn{1}{|c}{$A_2\times A_1^3$} & \multicolumn{1}{|c|}{$p_2p_1^3(t-1)^3$}  &  &  \\ \cline{1-3}
\multicolumn{1}{|c}{6} & \multicolumn{1}{|c}{$A_2^2\times A_1$} & \multicolumn{1}{|c|}{$p_2^2p_1(t-1)^3$}  &  &  \\ \cline{1-3}
\multicolumn{1}{|c}{6} & \multicolumn{1}{|c}{$A_5$} & \multicolumn{1}{|c|}{$p_5(t-1)^3$}  &  &  \\ \cline{1-3}
\multicolumn{1}{|c}{6} & \multicolumn{1}{|c}{$D_4\times A_1$} & \multicolumn{1}{|c|}{$(t^3+1)(t+1)^2(t-1)^3$}  &  &  \\ \cline{1-3}
\multicolumn{1}{|c}{6} & \multicolumn{1}{|c}{$A_2\times A_1^4$} & \multicolumn{1}{|c|}{$p_2p_1^4(t-1)^2$} &  &  \\ \cline{1-3}
\multicolumn{1}{|c}{6} & \multicolumn{1}{|c}{$A_2^2\times A_1^2$} & \multicolumn{1}{|c|}{$p_2^2p_1^2(t-1)^2$} &  &  \\ \cline{1-3}
\multicolumn{1}{|c}{6} & \multicolumn{1}{|c}{$(A_5\times A_1)'$} & \multicolumn{1}{|c|}{$p_5p_1(t-1)^2$} &  &  \\ \cline{1-3}
\multicolumn{1}{|c}{6} & \multicolumn{1}{|c}{$(A_5\times A_1)''$} & \multicolumn{1}{|c|}{$p_5p_1(t-1)^2$} &  &  \\ \cline{1-3}
\multicolumn{1}{|c}{6} & \multicolumn{1}{|c}{$D_4\times A_1^2$} & \multicolumn{1}{|c|}{$(t^3+1)(t+1)^3(t-1)^2$} &  &  \\ \cline{1-3}
\multicolumn{1}{|c}{6} & \multicolumn{1}{|c}{$D_4\times A_2$} & \multicolumn{1}{|c|}{$p_2(t^3+1)(t+1)(t-1)^2$} &  &  \\ \cline{1-3}
\multicolumn{1}{|c}{6} & \multicolumn{1}{|c}{$D_6(a_2)$} & \multicolumn{1}{|c|}{$(t^3+1)^2(t-1)^2$}  &  &  \\ \cline{1-3}
\multicolumn{1}{|c}{6} & \multicolumn{1}{|c}{$E_6(a_2)$} & \multicolumn{1}{|c|}{$(t^2+t+1)(t^2-t+1)^2(t-1)^2$}  &  &  \\ \cline{1-3}
\multicolumn{1}{|c}{6} & \multicolumn{1}{|c}{$A_2^3\times A_1$} & \multicolumn{1}{|c|}{$p_2^3p_1(t-1)$}  &  &  \\ \cline{1-3}
\multicolumn{1}{|c}{6} & \multicolumn{1}{|c}{$A_5\times A_1^2$} & \multicolumn{1}{|c|}{$p_5p_1^2(t-1)$}  &  &  \\ \cline{1-3}
\multicolumn{1}{|c}{6} & \multicolumn{1}{|c}{$A_5\times A_2$} & \multicolumn{1}{|c|}{$p_5p_2(t-1)$}  &  &  \\ \cline{1-3}
\multicolumn{1}{|c}{6} & \multicolumn{1}{|c}{$D_4\times A_1^3$} & \multicolumn{1}{|c|}{$(t^3+1)(t+1)^4(t-1)$}  &  &  \\ \cline{1-3}
\multicolumn{1}{|c}{6} & \multicolumn{1}{|c}{$D_6(a_2)\times A_1$} & \multicolumn{1}{|c|}{$(t^3+1)^2(t+1)(t-1)$}  &  &  \\ \cline{1-3}
\multicolumn{1}{|c}{6} & \multicolumn{1}{|c}{$E_6(a_2)\times A_1$} & \multicolumn{1}{|c|}{$(t^2-t+1)^2(t^2+t+1)(t+1)(t-1)$}  &  &  \\ \cline{1-3}
\multicolumn{1}{|c}{6} & \multicolumn{1}{|c}{$E_7(a_4)$} & \multicolumn{1}{|c|}{$(t^2-t+1)^2(t^3+1)(t-1)$}  &  &  \\ \cline{1-3}
\multicolumn{1}{|c}{6} & \multicolumn{1}{|c}{$A_5\times A_2\times A_1$} & \multicolumn{1}{|c|}{$p_5p_2p_1$}  &  &  \\ \cline{1-3}
\multicolumn{1}{|c}{6} & \multicolumn{1}{|c}{$D_4\times A_1^4$} & \multicolumn{1}{|c|}{$(t^3+1)(t+1)^5$}  &  &  \\ \cline{1-3}
\multicolumn{1}{|c}{6} & \multicolumn{1}{|c}{$D_4^2$} & \multicolumn{1}{|c|}{$(t^3+1)^2(t+1)^2$}  &  &  \\ \cline{1-3}
\multicolumn{1}{|c}{6} & \multicolumn{1}{|c}{$E_6(a_2)\times A_2$} & \multicolumn{1}{|c|}{$p_2(t^2-t+1)^2(t^2+t+1)$}  &  &  \\ \cline{1-3}
\multicolumn{1}{|c}{6} & \multicolumn{1}{|c}{$E_7(a_4)\times A_1$} & \multicolumn{1}{|c|}{$p_1(t^2-t+1)^2(t^3+1)$}  &  &  \\ \cline{1-3}
\multicolumn{1}{|c}{6} & \multicolumn{1}{|c}{$E_8(a_8)$} & \multicolumn{1}{|c|}{$(t^2-t+1)^4$}  &  &  \\ \cline{1-3}

\end{tabular}
\end{table}

\newpage

\def\bibindent{2.5em}

\end{document}